\documentclass[11pt]{tran-l}
\usepackage[a4paper, top=1.5in, bottom=1.5in, left=1in, right=1in]{geometry}

\usepackage{amssymb}

\newtheorem{theorem}{Theorem}[section]
\newtheorem{theoremL}{Theorem}[section]    

\newtheorem{lemma}[theorem]{Lemma}
\newtheorem{prop}[theorem]{Proposition}
\newtheorem{cor}[theorem]{Corollary}

\theoremstyle{definition}
\newtheorem{definition}[theorem]{Definition}

\theoremstyle{remark}
\newtheorem{remark}[theorem]{Remark}

\numberwithin{equation}{section}

\begin{document}

\title[Large fluctuations of random multiplicative functions]{
Large fluctuations of 
sums of a random multiplicative function}

\author{Besfort Shala}
\address{Fry Building, School of Mathematics, Woodland Road, Bristol BS8 1UG, United Kingdom}
\curraddr{}
\email{besfort.shala@bristol.ac.uk}
\thanks{}


\date{}

\dedicatory{}

\begin{abstract} Let $f$ be a Rademacher or Steinhaus random multiplicative function. For various arithmetically interesting subsets $\mathcal A\subseteq [1, N]\cap\mathbb N$ such that the distribution of $\sum_{n\in \mathcal A} f(n)$ is approximately Gaussian, we develop a general framework to understand the large fluctuations of the sum. This extends the general central limit theorem framework of Soundararajan and Xu.

In the case when $\mathcal A = (N-H, N]$ is a short interval with admissible $H=H(N)$, we show that almost surely
\begin{equation*}
    \limsup_{N\to\infty} \frac{\big\lvert\sum_{N-H<n\leq N} f(n)\big\rvert}{\sqrt{H\log \frac{N}H{}}}>0.
\end{equation*} When $\mathcal A$ is the set of values of an admissible polynomial $P\in\mathbb Z[x]$, we extend work of Klurman, Shkredov, and Xu, as well as Chinis and the author, showing that almost surely 
\begin{equation*}
   \limsup_{N\to\infty} \frac{\big\lvert\sum_{n\leq N} f(P(n))\big\rvert}{\sqrt{N \log\log N}}>0,
\end{equation*} even when $P$ is a product of linear factors over $\mathbb Q$. In this case, we also establish the corresponding almost sure upper bound, matching the law of iterated logarithm. 

An important ingredient in our work is bounding the Kantorovich--Wasserstein distance by means of a quantitative martingale central limit theorem. 
\end{abstract}

\maketitle

\section{Introduction} 

Since their introduction by Wintner \cite{Wintner1944}, random multiplicative functions have attracted a lot of attention in number theory. 
\begin{definition}
    A \emph{Steinhaus} random multiplicative function $f$ is a sequence of random variables $f(1), f(2), \ldots$ such that $f(mn)=f(m)f(n)$ for all positive integers $m$ and $n$, and for each prime $p$, $f(p)$ is uniformly distributed on the unit circle.

    A \emph{Rademacher} random multiplicative function $f$ is a sequence of random variables $f(1), f(2),$ $\ldots$  supported on square-free integers such that $f(mn) = f(m)f(n)$ whenever $m$ and $n$ are coprime, and for each prime $p$, $f(p)$ is uniformly distributed on $\{-1, 1\}$. 
\end{definition}
Although random multiplicative functions are useful in modeling deterministic objects such as Dirichlet and Archimedean characters, they have become increasingly interesting to study in their own right. Building on earlier work of Hal\'asz \cite{Halasz1977} and Hal\'asz and R\'enyi \cite{HalaszRenyi1961}, partial sums of a random multiplicative function $\sum_{n\leq N} f(n)$ have been extensively studied by Harper. Being a sum of random variables, it is natural to ask whether $\frac{1}{\sqrt N}\sum_{n\leq N} f(n)$ has an approximately Gaussian distribution as $N\to\infty$. 
Harper \cite{Harper2013} showed that this is not the case and in fact, he later showed \cite{Harper2020_MomentsRandomMultiplicativeI} that the typical size of these sums is of order $\sqrt N/(\log\log N)^{\frac14}$. One might therefore hope to obtain 
an interesting limiting distribution after normalizing by this factor instead. This is a delicate matter closely related to Gaussian multiplicative chaos, and has been resolved recently in the work of Gorodetsky and Wong \cite{gorodetsky_wong_2025_limiting} in the Steinhaus case. 

From an arithmetic point of view, it is interesting to understand sums of a random multiplicative function when the summands are restricted to subsets of positive integers. This will be our focus in this paper. Examples of such restrictions that have been considered previously include: 
\begin{itemize}\item[(i)] integers with an atypically small number of prime factors (Hough \cite{Hough2011}, Harper \cite{Harper2013}), 
\item[(ii)] integers in a short interval (Chatterjee and Soundararajan \cite{ChatterjeeSoundararajan2012}, Soundararajan and Xu \cite{soundararajan2023clt}, Pandey, Wang, and Xu \cite{PandeyWangXu2024}), 
\item[(iii)] sums of two squares in a short interval and shifted primes $p-k$ for $k\neq 0$ (Soundararajan and Xu \cite{soundararajan2023clt}), 
\item[(iv)] polynomial images (Najnudel \cite{najnudel2020consecutive}, Klurman, Shkredov, and Xu \cite{KlurmanShkredovXu2023}, Chinis and the author \cite{ChinisShala2025}).
\end{itemize} 
In each of the aforementioned works, the distribution of the sums turns out to be approximately Gaussian, demonstrating that multiplicativity seems to interfere less with independence when the summands are restricted to a less ``multiplicatively structured'' set. In this paper, we extend the general framework of Soundararajan and Xu \cite{soundararajan2023clt} on central limit theorems for random multiplicative functions to the setting of large fluctuations.

There has been considerable work in understanding almost sure large fluctuations for the full sum $\sum_{n\leq N} f(n)$ and related variants. Harper \cite{Harper2023_ASLF} showed that for any (slowly growing) function $V(N)\to\infty$, almost surely there is a sequence of positive integers $N_k\to\infty$ such that 
\begin{equation*}
    \left\lvert\sum_{n\leq N_k}f(n)\right\rvert\gg\sqrt{N_k}\frac{(\log\log N_k)^{\frac14}}{V(N_k)}.
\end{equation*}
This matches what one expects from the law of iterated logarithm, up to the factor $V(N)$. Proving the corresponding upper bound remains a challenge. Partial results with an upper bound of the right shape (namely $\sqrt N$ times a power of $\log\log N$) have been proved by Lau, Tenenbaum and Wu \cite{LauTenenbaumWu2013MeanValuesRMF}, as well as most recently by Caich \cite{Caich2023} with the current best known almost sure bound
\begin{equation*}
    \left\lvert\sum_{n\leq N} f(n)\right\rvert\ll_{\epsilon} \sqrt N(\log\log N)^{\frac{3}{4}+\epsilon}.
\end{equation*}
In the case when the summands are demanded to have a prime factor bigger than $\sqrt N$, Mastrostefano \cite{Mastrostefano2022} proved an essentially sharp almost sure upper bound with the exponent $\frac{1}{4}+\epsilon$ in place of $\frac{3}4$, with the corresponding almost sure lower bound following by work of Harper \cite{Harper2023_ASLF}; see also the work of Hardy \cite{Hardy2025LargePrime} on the distributional aspect in this case. A weighted variant $\sum_{n\leq N} \frac{f(n)}{\sqrt n}$
has been considered by Aymone, Heap and Zhao \cite{aymone2021partial}, and Hardy \cite{Hardy2023AlmostSure} in the Steinhaus case who obtained essentially sharp bounds, and Atherfold \cite{Atherfold2025} in the Rademacher case albeit without sharp bounds.

Recently, Hoban, Shah, Ismail, Verreault, and Zaman \cite{HobanShahIsmailVerreaultZaman2025} have extended the framework of Soundararajan and Xu \cite{soundararajan2023clt} in a different direction, namely in the function field setting. It is likely that the methods of this paper, combined with the work in \cite{HobanShahIsmailVerreaultZaman2025}, can be used to prove analogous results over function fields.

We are now ready to state the main results of this paper. Our focus will be on two main examples: (i) polynomial images, and (ii) integers in a short interval. However, our method is rather general, so it should be able to flexibly handle several examples; see Section \ref{section:generalSetup} for the general but rather technical results. 

\subsection{Polynomial images} This case is particularly interesting due to its connection with conjectures of Chowla and Elliott \cite{Chowla1965, Elliott1992, Elliott1994} on correlations of multiplicative functions. See the introductions of \cite{KlurmanShkredovXu2023, ChinisShala2025} for a more detailed discussion in the setting of random multiplicative functions, where central limit theorems were proved.

\begin{theoremL}\label{thm:asboundsPoly}
    Let $f$ be a  Rademacher or Steinhaus random multiplicative function. In the Rademacher case, assume $P\in\mathbb Z[x]$ is a product of at least two distinct linear factors over $\mathbb Q$, or irreducible of degree $2$, taking infinitely many square-free values. In the Steinhaus case, assume $P(x)$ is not of the form $w(x+c)^d$ for $w\in\mathbb Z, c\in\mathbb Q$. Then almost surely we have $$\left\lvert\sum_{n\leq N} f(P(n))\right\rvert \ll \sqrt{N\log\log N}.$$ Moreover, almost surely there exists a sequence $N_k\to\infty$ such that 
    \begin{equation*}
        \left\lvert\sum_{n\leq N_k} f(P(n))\right\rvert\gg \sqrt{N_k\log\log N_k}.
    \end{equation*}
\end{theoremL}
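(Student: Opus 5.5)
We prove the two halves separately, treating $S(N)=\sum_{n\le N} f(P(n))$ as a sum with a martingale structure. Order the primes and let $\mathcal F_p$ be the $\sigma$-algebra generated by $f(q)$, $q\le p$. Then
\[
S(N)=\sum_{p} Z_p(N),\qquad Z_p(N)=\sum_{\substack{n\le N\\ P^+(P(n))=p}} f(P(n)),
\]
is a sum of martingale differences. Since $P$ is not degenerate (not of the form $w(x+c)^d$, or in the Rademacher case one of the admissible shapes), the diagonal count $\#\{m,n\le N: P(m)=P(n)\}$ is $\sim cN$, and $\mathbb E|S(N)|^2\asymp N$. The earlier CLT results (Klurman--Shkredov--Xu, Chinis--Shala) give approximate Gaussianity of $S(N)/\sqrt N$. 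For the law of the iterated logarithm we need two ingredients: a quantitative Gaussian approximation with an explicit rate, and sufficient decorrelation between $S$ at well-separated scales.

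\textbf{Upper bound.} We would use high moments together with a dyadic/chaining argument. The main input is a moment bound of the form
\[
\mathbb E|S(N)|^{2k}\le (Ck)^{k} N^{k} \quad \text{for } k\le c\log\log N,
\]
that is, Gaussian-type moments up to order of size $\log\log N$. To get it, expand $\mathbb E|S(N)|^{2k}$ as the number of solutions to $P(n_1)\cdots P(n_k)=P(m_1)\cdots P(m_k)$ (up to squares in the Rademacher case). Bound the off-diagonal solutions using the structure of $P$: for linear factors, counting via divisor bounds; for irreducible quadratics, Hooley-type counting. Alternatively, apply Burkholder's inequality to the martingale and control the conditional variance $\sum_p \mathbb E[|Z_p|^2\mid\mathcal F_{p^-}]$ by its mean plus large-deviation terms. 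Markov's inequality with $k\asymp\log\log N$ then gives $\mathbb P(|S(N)|>A\sqrt{N\log\log N})\le (\log N)^{-2}$ for large $A$ on $N=2^j$. We would pass to all $N$ via a maximal inequality on dyadic blocks, using chaining over sub-blocks (Rademacher--Menshov style). This loses at most constants because block increments have the same moment bounds. Borel--Cantelli finishes.

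\textbf{Lower bound.} Take sparse $N_k=\exp(k^{\beta})$ or $N_k=e^{k^2}$-type sequences, and split $f$ into small primes $p\le y_k$ and large primes. Conditionally on small primes, the sum over $n\le N_k$ with a large prime factor is a martingale in the large primes. The quantitative martingale CLT in Kantorovich--Wasserstein distance (the paper's key ingredient) gives that
\[
\mathbb P\Bigl(S(N_k)>\lambda\sqrt{N_k\log\log N_k}\Bigm| \text{previous}\Bigr)\gg (\log N_k)^{-\lambda^2/2-o(1)} .
\]
This requires a Gaussian tail at height $\sqrt{\log\log N}$, which a Wasserstein-distance bound alone does not give. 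So we would instead apply the CLT to an exponentially tilted measure, or compare smoothed indicators with $W_1$ error $\ll(\log N)^{-\delta}$, where the rate $\delta$ beats $\lambda^2/2$ for small $\lambda$. Contributions from $n$ whose $P(n)$ is $y_k$-smooth, or which share large primes with earlier $N_j$, are shown to be $o(\sqrt{N_k})$ in $L^2$. A second Borel--Cantelli (Lévy's conditional version) then yields infinitely many $k$ with $|S(N_k)|\gg\sqrt{N_k\log\log N_k}$.

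\textbf{Main obstacle.} The hardest step is making the Gaussian approximation strong enough in the tails. We need a rate $(\log N)^{-\delta}$ for the conditional variance concentration, $\sum_p \mathbb E[|Z_p|^2\mid\cdot]\approx c N$, which reduces to bounding fourth-moment-type correlation counts for $P$ with a power-of-$\log$ saving. For reducible $P$ (products of linear factors) these counts are larger and need careful treatment; we expect this to be the crux.
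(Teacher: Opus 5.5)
\textbf{Lower bound.} There is a genuine gap where you claim that the contribution of $n\le N_k$ sharing large primes with earlier scales is $o(\sqrt{N_k})$ in $L^2$. For $\sum_k(\log N_k)^{-\lambda^2/2}$ to diverge, $\log N_k$ must grow only polynomially in $k$ (e.g.\ $N_k=e^{k^2}$), and then $\log N_{k-1}/\log N_k\to 1$. Take $P(n)=n(n+1)$. The earlier sums are functions of $f(p)$ for $p\le N_{k-1}+1$, while by Mertens
\[
\#\{n\le N_k:\ P^+(n(n+1))>N_{k-1}+1\}\le (2+o(1))\,N_k\log\tfrac{\log N_k}{\log N_{k-1}}\asymp N_k/k.
\]
So the past-measurable part of $S(N_k)$ has full size $\asymp\sqrt{N_k}$. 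The genuinely new part has standard deviation $\ll\sqrt{N_k/k}$, a factor $\gg\sqrt{k\log k}$ below your target $\lambda\sqrt{N_k\log\log N_k}$. Conditionally on the past, $S(N_k)$ is therefore essentially determined at the relevant scale, and L\'evy's criterion becomes circular. With a coarser filtration you would instead need joint tail asymptotics across scales, which nothing in your plan supplies.

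The missing idea is to avoid the second Borel--Cantelli lemma altogether. The paper places $K=(\log X)^{\varepsilon_0}$ scales $N_l=e^{l\sqrt{\log X}}X$ inside one block $[X,X^2]$, so correlations are $\le e^{-\sqrt{\log X}/2}$ while $\log\log N_l\asymp\log\log X$. It applies the Hall--Heyde bound to every combination $\sum_l c_lN_l^{-1/2}S(N_l)$ with $\sum_l c_l^2=1$. Bobkov--G\"otze then upgrades this to a Wasserstein comparison with $K$ independent standard Gaussians, giving error $\exp(-\frac1{11}(\log X)^{1/3})$ for $\mathbb P(\max_l N_l^{-1/2}S(N_l)\le t)$. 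Harper's normal comparison lemma shows $\max_{l\le K}Y_l\le\sqrt{(2-\delta)\log K}$ has probability at most $\exp(-\Theta(K^{\delta/20}/\sqrt{\log K}))$. Only the bulk of the joint law is used, never a single-scale tail. The failure probability per block is $\exp(-(\log X)^c)$, so the first Borel--Cantelli lemma over sparse blocks (Lemma~\ref{lemma:asLbGeneral}) finishes.

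\textbf{Upper bound.} This half rests on an input you do not establish: $\mathbb E|S(N)|^{2k}\le (Ck)^kN^k$ uniformly for $k\asymp\log\log N$. Divisor-bound counting cannot give it. Fixing $n_1,\dots,n_k$ leaves $\ll(\deg P)^k\tau_k(\prod_iP(n_i))$ choices for the $m_i$, which yields only bounds of shape $N^{k+\epsilon}$, not $(Ck)^kN^k$. The Burkholder variant merely transfers the same $2k$-fold counting problem to the conditional variance. As the paper remarks, the Steinhaus case can be done this way using the uniformity in Wang--Xu's paucity estimates, but no such input is known for Rademacher $f$.

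The paper needs only fourth moments. The Hall--Heyde bound carries the non-uniform factor $(1+|x|^{16/5})^{-1}$. The inputs are the power-saving count $N^{2-\delta_P}$ of Klurman--Shkredov--Xu and Chinis--Shala, and Hmyrova's bound for smooth values of $P(n)$ in condition (II). The count is valid for products of linear factors too, so it is not the crux you expect. Together these give a Kolmogorov error $\exp(-\frac15\sqrt{\log N\log\log N})$, negligible against the Gaussian tail $(\log N)^{-C^2/2}$ at $x=C\sqrt{\log\log N}$.

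Borel--Cantelli is applied at $N_l=\lfloor e^{l^c}\rfloor$, whose relative gaps are a negative power of $\log N_l$. Interpolation between them (Lemma~\ref{lemma:slowVarPoly}) then needs only the fourth-moment hypercontractive bound, Billingsley's maximal inequality and the Nair--Tenenbaum bound for $\sum\tau_3(P(n))$. Your dyadic points $2^j$ would instead require high-moment bounds for all sub-block increments, which are equally unjustified.
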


The almost sure upper bound in Theorem \ref{thm:asboundsPoly} is new, whereas the lower bound is an extension of results of Klurman, Shkredov, and Xu \cite{KlurmanShkredovXu2023} in the Steinhaus case, and Chinis and the author \cite{ChinisShala2025} in the Rademacher case, to polynomials that split as a product of linear factors over $\mathbb Q$. Indeed, the method used in \cite{KlurmanShkredovXu2023, ChinisShala2025} relied on the fact that for a polynomial $P$ that has an irreducible factor of degree at least two, there are many integers $n$ for which $P(n)$ has a prime factor greater than $n\log n$. 

To demonstrate how this was used, let us consider $P(n) = n^2+1$. By the aforementioned fact, one can ``pull out'' a unique large prime factor for such $n$ and still capture a good ``bulk'' of the sum for infinitely many (sparse) $N$, that is 
\begin{equation*}
    \sum_{n\leq N}f(n^2+1)\approx \sum_{N\log N < p\leq N^2}f(p)\sum_{\substack{n\leq N \\ p\mid n^2+1}}f\left(\frac{n^2+1}{p}\right).
\end{equation*}

Upon conditioning on the small primes, this is a weighted sum of independent random variables, which is then handled using intricate combinatorial arguments and Gaussian approximation. 

Indeed, the method described above cannot be directly applied to the case when $P$ is a product of linear factors, for all its prime factors are of size at most linear in $N$.
In this paper, we will use a different method that does not involve any combinatorial splitting of the sum or conditioning. We will rather fully utilize the martingale structure of the entire sum -- we will elaborate on this in the next section. In particular, provided enough number-theoretic input (see the work of Martin \cite{Martin2002}), our method would also work to prove almost sure large fluctuations of $N$-smooth\footnote{An integer is called $y$-smooth if all of the prime factors dividing it are at most $y$.} values of irreducible polynomials.

\subsection{Integers in a short interval} A central limit theorem for sums of a random multiplicative function over integers in a short interval $(N-H, N]$ was first proved by Chatterjeee and Soundararajan \cite{ChatterjeeSoundararajan2012} when $H=o\left(\frac{N}{\log N}\right)$, using essentially only second and fourth moment information. This was later extended to $H\leq \frac{N}{(\log N)^{c}}$ with $c>2\log2-1$ by Soundararajan and Xu \cite{soundararajan2023clt}, entering a regime of $H$ where naively the fourth moment of the sum blows up. However, this is bypassed by throwing out $0\%$ of the integers in the short interval with atypically many prime factors, resulting in a controlled fourth moment. 

We prove an almost sure lower bound in the same regime. 

\begin{theoremL}\label{thm:asBoundsShort}
    Let $f$ be a Rademacher or Steinhaus random multiplicative function and suppose that $H=H(N)$ is a smooth, increasing, and concave function, such that $N^{\frac{11}{15}}\leq H(N)\leq N/(\log N)^c$
    for some constant $c>2\log 2-1$. 
    Then almost surely there exists a sequence $N_k\to\infty$ with corresponding $H_k = H(N_k)$ such that 
    \begin{equation*}
        \left\lvert\sum_{N_k-H_k < n \leq N_k}f(n)\right\rvert\gg \sqrt{H_k\log\frac{N_k}{H_k}}.
    \end{equation*}
\end{theoremL}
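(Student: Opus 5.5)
We follow the classical route to the lower half of the law of the iterated logarithm. We choose a sparse sequence of scales, show that at each scale the normalized sum exceeds a multiple of $\sqrt{\log(N/H)}$ with probability roughly $(N/H)^{-\delta}$, and then use (approximate) independence across scales together with a Borel--Cantelli argument.

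Write $S(N)=\sum_{N-H<n\le N}f(n)$, with $H=H(N)$. Following Soundararajan and Xu, we first discard the $0\%$ of $n$ with atypically many prime factors, keeping the fourth moment controlled when $c>2\log2-1$; call the truncated sum $\tilde S(N)$. Since the discarded set is small in mean square, $\mathbb E|S-\tilde S|^2=o(H)$, and Borel--Cantelli along a sparse sequence handles the difference. The natural count of independent scales is $\log(N/H)$, not $\log\log N$: two windows $(N_i-H_i,N_i]$ and $(N_j-H_j,N_j]$ are essentially uncorrelated once they are disjoint. Among windows of length $H$ inside $[N,2N]$ there are about $N/H$ disjoint ones, which gives the normalization $\sqrt{2H\log(N/H)}$ (the maximum of $N/H$ roughly independent Gaussians of variance $H$).

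So the plan is to fix a large dyadic $N$ and take $K\asymp (N/H)^{1-\epsilon}$ disjoint windows $(M_j-H(M_j),M_j]\subset[N,2N]$. We would show that the vector $(\tilde S(M_j)/\sqrt{H})_{j\le K}$ is close to a standard Gaussian vector with identity covariance. The covariance is indeed close to the identity, since distinct integers give orthogonal $f(n)$.

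The quantitative input is the martingale CLT framework: order the primes, and write $\tilde S(M_j)=\sum_p X_{p,j}$ as a martingale-difference sum over the largest prime factor. For the linear statistic $T=\sum_j \lambda_j\tilde S(M_j)/\sqrt H$ we then bound the Kantorovich--Wasserstein distance to a Gaussian via the conditional variance $\sum_p\mathbb E[|X_p|^2\mid\mathcal F_{p^-}]$ and a fourth-moment (Lyapunov) term. Here we would invoke the general results of Section \ref{section:generalSetup}. That should give a multivariate Wasserstein bound of size $K^{O(1)}(\log N)^{-\eta}$, or more usefully a bound on $\mathbb P(\max_j \tilde S(M_j)/\sqrt H>\lambda)$. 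Because $K$ is a power of $N/H\ge(\log N)^c$, a polynomial loss in $K$ is not affordable. So instead we use the weaker event $\max_j \tilde S(M_j)>\kappa\sqrt{H\log(N/H)}$ with small $\kappa$. We would prove it has probability $\ge 1/2$ by a second-moment (Paley--Zygmund) argument on
\[
Z=\#\{j\le K:\ \tilde S(M_j)>\kappa\sqrt{H\log(N/H)}\}.
\]
The first moment of $Z$ needs a one-dimensional tail estimate at level $\kappa\sqrt{\log(N/H)}$ standard deviations, giving $\mathbb E Z\gg K(N/H)^{-\kappa^2/2}$, which is large. The second moment needs pairwise near-independence, i.e.\ a two-dimensional Gaussian approximation with a tail-sensitive (moderate deviation) error rather than a Wasserstein error.

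Then we pass to infinitely many scales: take $N_k=2^{k^2}$ (or any sparse sequence) and condition on primes $\le N_{k-1}$ in order to use the conditional Borel--Cantelli lemma. Alternatively, a zero-one law suffices: the event that the $\limsup$ is positive is a tail event for the independent $f(p)$. A tiny shift of finitely many $f(p)$ changes every window sum by a bounded factor in distribution, so Hewitt--Savage/Kolmogorov gives probability $0$ or $1$, and the positive-probability bound at each scale forces it to be $1$. The smoothness and concavity of $H$ ensure that the $M_j$ can be chosen with $H(M_j)\approx H(N)$ and windows that do not overlap. The condition $H\ge N^{11/15}$ ensures that the short-interval prime counts used in the variance computations hold.

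The main obstacle is the moderate-deviation step. We need the Gaussian tail at height $\kappa\sqrt{\log(N/H)}$ with relative accuracy, and jointly for pairs of windows, whereas the martingale CLT naturally gives only a Wasserstein or Kolmogorov error of size a negative power of $\log N$. We would try first to exploit that the target probability $(N/H)^{-\kappa^2/2}$ can be made larger than the CLT error $(\log N)^{-\eta}$ by taking $\kappa$ small, because $N/H\le N$ while the error is a fixed power saving in $N/H$ coming from the fourth-moment bound.
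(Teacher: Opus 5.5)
The gap is at the step you flag as the main obstacle, and your proposed way around it rests on a false premise.

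\textbf{The tail estimates are out of reach.} Paley--Zygmund needs the single-window tails at height $\kappa\sqrt{\log(N/H)}$ up to errors $o((N/H)^{-\kappa^2/2})$, and the pair tails up to $O((N/H)^{-\kappa^2})$. You hope to get these from a martingale CLT whose error is ``a fixed power saving in $N/H$''. It is not.

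The non-trivial solutions of the fourth-moment equation do give a power saving, but the diagonal terms do not. The Lyapunov term satisfies $\sum_p\mathbb E|M_{p,N}|^4\ge\sum_p(\mathbb E|M_{p,N}|^2)^2$. The integers in the window with small largest prime factor make this $N^{-o(1)}$. Without truncation it is about $\exp(-C\sqrt{\log N\log\log N})$. It is still $N^{-o(1)}$ after discarding $\Omega(n)>(1+\varepsilon)\log\log N$, since that truncation only forces $P^+(n)\ge N^{1/((1+\varepsilon)\log\log N)}$. Meanwhile the factor $(1+|x|^{16/5})^{-1}$ gains only a power of $\log(N/H)$.

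For $H=N^{\alpha}$, which the theorem allows, the target $N^{-(1-\alpha)\kappa^2/2}$ is far below this error for every fixed $\kappa>0$. So shrinking $\kappa$ does not help, though in the range $N/H=(\log N)^{O(1)}$ the idea is at least plausible.

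\textbf{Pairwise near-independence is not justified.} Orthogonality of the $f(n)$ only gives zero unconditional covariance, which says nothing about joint tails. Take two disjoint windows with $M'\approx\frac abM$. Their sums contain $f(b)\sum_m f(m)$ and $f(a)\sum_m f(m)$ over a common range of $m$ of length $\asymp H/a$. This yields $\asymp H^2/(ab)$ non-trivial solutions of $n_1n_2n_3n_4=\square$ (or $n_1n_4=n_2n_3$). Such pairs are uncorrelated but not asymptotically independent, so an arbitrary family of disjoint windows in $[N,2N]$ is not jointly close to i.i.d.\ Gaussians.

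\textbf{How the paper avoids both problems.}

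\emph{Choice of scales.} It takes only $k=(X/H(X))^{\varepsilon_0}$ scales $N_l=lX$, with $l$ prime in $(h(X)/2,h(X)]$ and $h(X)=(X/H(X))^{\delta}$. This still gives $\sqrt{\log k}\asymp\sqrt{\log(X/H)}$. Any exact dilation between two windows then involves the large primes $l_1,l_2\asymp h(X)$. Proposition~\ref{prop:CountAt4Scales} bounds the non-trivial solutions by $H_{l_1}H_{l_2}/h(X)^2$, which makes all conditional correlations small with high probability.

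\emph{Conditioning on small primes.} It conditions on $f(p)$ for $p\le P=(Xh(X))^{2/3}$ and discards the $P$-smooth part, which is negligible for at least half of the scales by Lemma~\ref{lemma:ignoresmoothShort}. What remains is conditionally a linear form in the independent $f(p)$, $p>P$, with $\varepsilon_1'$ and $\varepsilon_2$ power savings in $X$. This is exactly what removes the smooth-number obstruction above.

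\emph{No individual tails.} The paper never estimates a single tail. It approximates the whole vector by a Gaussian vector with the conditional covariance. It then compares $\widetilde{\mathbb P}(\max_l\widetilde S_{N_l}\le t)$ with $\mathbb P(\max_lY'_l\le t+\eta)$ through a Lipschitz test function (Corollary~\ref{cor:normalcomparison}). An additive $o(1)$ error suffices there, because the Gaussian probability is tiny by Lemma~\ref{lemma:normalsBigLemma}.

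\emph{Passing to infinitely many scales.} This gives probability $1-o(1)$ at each scale $X$, rather than $\ge1/2$. Lemma~\ref{lemma:asLbGeneral} then concludes by Borel--Cantelli along a sparse subsequence, with no independence between scales needed.

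Your alternatives for this last step fail as stated. With $N_k=2^{k^2}$ one has $\log N_k/\log N_{k-1}\to1$, so a proportion $1-o(1)$ of each window at scale $N_k$ is $N_{k-1}$-smooth. Conditioning on the primes up to $N_{k-1}$ therefore essentially fixes the sums you need to be random. The $0$--$1$ law argument does not show that the event lies in the tail $\sigma$-field of $(f(p))_p$: changing $f(2)$ alters a positive proportion of every window sum pathwise.
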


\begin{remark}
    The fluctuations of order $\sqrt{\log\frac{N}{H}}$ may seem surprisingly large at first, especially in a naive comparison with the law of iterated logarithm. However, this is explained by the fact that short sums of length $H$ decorrelate much faster, and indeed roughly $N/H$ essentially independent sums fit in a ``window'' of size $N$. In agreement with the law of iterated logarithm, the maximum of these sums has size roughly $\sqrt{\log\frac{N}{H}}$. 
\end{remark}

\begin{remark}
    The conditions on $H$ are imposed simply for convenience -- any standard choices such as $H=N^{\alpha}$ for $\alpha < 1$, $H = N/\exp\left(\sqrt{\log N}\right)$, or $H = N/(\log N)^A$ for some constant $A>2\log 2-1$, satisfy these conditions (for sufficiently large $N$). 
    
    We have not attempted to optimize the exponent $\frac{11}{15}$ -- it is likely possible to bring it down to $\frac{3}{5}$. However, the lower bound on $H$ should not be essential: for very small $H\ll \log N$ the large fluctuations are captured by almost sure long runs of ones, of length $\approx \log N$ (see the work of Erdős and Rényi \cite{ErdosRenyi1970}), whereas if $\log N\ll H\ll N^{\frac{11}{15}}$, our method should work (with significant simplifications when $H\ll N^{\frac{1}{2}}$), but the restriction stems from a result on square-free smooth integers in short intervals; see Lemma \ref{lemma:unSmoothSquarefree}. Since we are only proving a lower bound and therefore can pick convenient scales to work with, it is likely possible to circumvent this using the Matom\"aki--Radziwiłł machinery; see the work of Jain \cite{jain2025smooth} and the references within. However, we have opted for simplicity, especially since our result captures the most interesting regime where there is a transition of the large fluctuations from $\sqrt{\log N}$ down to $\sqrt{\log \log N}$.\footnote{We thank Sarvagya Jain for discussions pertaining to this remark.}
\end{remark}

It would be interesting to prove the corresponding almost sure upper bound in any regime of $H$. For rather large $H$, it is likely that the sophisticated ideas in \cite{LauTenenbaumWu2013MeanValuesRMF, Mastrostefano2022, Hardy2023AlmostSure, Atherfold2025} relating the short sum $\sum_{N-H<n\leq N} f(n)$ to a random Euler product could be useful. It is unclear, however, whether this would result in a sharp (or almost sharp) upper bound in any regime of $H$ -- this is an ongoing investigation. 

Upcoming work of Harper, Soundararajan, and Xu 
establishes a central limit theorem in the range $H = o(N)$, although with a normalization of the sum that differs from just the standard deviation. It would be interesting to determine whether our method would succeed to prove almost sure lower bounds of the right order of magnitude in this extended range. See also the work of Caich \cite{Caich2024randomshort} for more on the intricate behavior of short sums in this regime.

\section*{Acknowledgements}
I would like to express my deep gratitude to my supervisors, Jonathan Bober and Oleksiy Klurman, for their continued support, encouragement, delightful discussions, and their careful reading and feedback on early drafts of this work. Moreover, I would like to thank Joseph Najnudel for a fruitful discussion regarding Gaussian approximation. Last but not least, I am thankful to Christopher Atherfold, Seth Hardy, and Neo Tardy, for enlightening technical conversations and their comments on earlier drafts of the paper.  The author is funded by a University of Bristol PhD scholarship.

\section{Overview of the proofs}
\subsection{Martingale structure} Our starting point is the observation of Harper \cite{Harper2013} that sums of a random multiplicative function have the structure of a martingale with respect to the filtration induced by the largest prime factor, namely 
\begin{equation*}
    \sum_{n\in\mathcal A}f(n) = \sum_{p^{\alpha}\leq N}f(p^{\alpha})\sum_{\substack{n\in\mathcal A \\ P^+(n)=p \\ p^{\alpha}\parallel n}}f\left(\frac{n}{p^{\alpha}}\right).
\end{equation*}
This observation was utilized in almost all subsequent works on random multiplicative functions. For us, its usefulness lies in the fact that it morally reduces the understanding of $\sum_{n\in\mathcal A}f(n)$ to evaluating the second and fourth moments of the sum, provided they do not blow up. More precisely, we will apply the following quantitative result from the book of Hall and Heyde \cite{HallHeyde1980}, yielding a bound on the Kolmogorov distance between our martingale and a Gaussian. 
\begin{prop}[\cite{HallHeyde1980}, Theorem 3.9]\label{prop:quantCLT}
    Let $X_1, X_2, \ldots, X_n$ be a real square-integrable martingale difference sequence.
    Let $S_n = \sum_{i=1}^n X_i$. Then 
    \begin{equation*}
\left\lvert\mathbb P(S_n\leq x) - \frac{1}{\sqrt{2\pi}}\int_{-\infty}^x e^{-t^2/2}\emph{d}t\right\rvert\ll \frac{\left(\sum_{i=1}^n \mathbb E\lvert X_i\rvert^4 + \mathbb E\left\lvert\sum_{i=1}^n X_i^2 - 1\right\rvert^2\right)^{1/5}}{1+\lvert x\rvert^{\frac{16}{5}}}.
    \end{equation*}
\end{prop}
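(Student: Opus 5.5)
The bound is Theorem 3.9 of Hall and Heyde \cite{HallHeyde1980}, and I would reprove it by the Skorokhod embedding route. Write $L := \sum_{i} \mathbb E|X_i|^4 + \mathbb E\left|\sum_i X_i^2 - 1\right|^2$. We may assume $L$ is small, since otherwise the bound is trivial.

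\textbf{Step 1: embedding.} By the Skorokhod embedding theorem for martingales (possibly after enlarging the probability space), there are a standard Brownian motion $W$ and stopping times $0=T_0\le T_1\le\dots\le T_n$ such that $S_k = W(T_k)$ for each $k$. Write $\tau_i = T_i - T_{i-1}$ and let $\mathcal G_{i-1}$ be the $\sigma$-algebra generated by the embedding up to time $T_{i-1}$. The embedding also gives
\begin{equation*}
\mathbb E[\tau_i\mid\mathcal G_{i-1}] = \mathbb E[X_i^2\mid\mathcal G_{i-1}], \qquad \mathbb E[\tau_i^2\mid\mathcal G_{i-1}]\ll \mathbb E[X_i^4\mid\mathcal G_{i-1}].
\end{equation*}

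\textbf{Step 2: the random time is close to $1$.} I decompose
\begin{equation*}
T_n - 1 = \sum_i\big(\tau_i - \mathbb E[\tau_i\mid\mathcal G_{i-1}]\big) - \sum_i\big(X_i^2 - \mathbb E[X_i^2\mid\mathcal G_{i-1}]\big) + \Big(\sum_i X_i^2 - 1\Big).
\end{equation*}
The first two sums are martingales. By orthogonality of their increments, each has second moment $\ll \sum_i \mathbb E X_i^4$. The last term has second moment at most $L$ by definition. Hence $\mathbb E|T_n-1|^2\ll L$.

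\textbf{Step 3: transfer to $W(1)$.} By the Burkholder--Davis--Gundy inequality applied to Brownian motion between the times $\min(T_n,1)$ and $\max(T_n,1)$, we get $\mathbb E|W(T_n)-W(1)|^4 \ll \mathbb E|T_n-1|^2 \ll L$. By Markov's inequality, $\mathbb P(|S_n - W(1)|>\varepsilon)\ll L/\varepsilon^4$. Therefore
\begin{equation*}
\mathbb P(W(1)\le x-\varepsilon) - \frac{L}{\varepsilon^4} \;\ll\; \mathbb P(S_n\le x) \;\ll\; \mathbb P(W(1)\le x+\varepsilon) + \frac{L}{\varepsilon^4},
\end{equation*}
where the implied constants only multiply the $L/\varepsilon^4$ terms. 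The Gaussian density is bounded, so $\Phi(x\pm\varepsilon)-\Phi(x)\ll\varepsilon$. Choosing $\varepsilon = L^{1/5}$ balances the two errors and gives the uniform bound $\ll L^{1/5}$.

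\textbf{Step 4: non-uniform decay in $x$ (the main obstacle).} For $|x|\ge 1$ the choice $\varepsilon=L^{1/5}$ is too crude, and I would treat this range separately. Take $x>1$ (the case $x<-1$ is symmetric). Split according to whether $|S_n-W(1)|\le |x|/2$. On that event, both $\mathbb P(S_n>x)$ and $1-\Phi(x)$ are dominated by Gaussian tails $\ll e^{-x^2/8}$. The complement has probability $\ll L/x^4$. One also has the direct estimate $\mathbb P(S_n>x)\le \mathbb E S_n^4/x^4$, with $\mathbb E S_n^4\ll 1+L$ by Burkholder's inequality together with Step 2.

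These estimates give a bound of the form $\min\big(L^{1/5},\, L/x^4 + e^{-x^2/8}\big)$, and I then need to interpolate to obtain $L^{1/5}(1+|x|)^{-16/5}$. The tail $L/x^4$ is the easy part. The delicate regime is where $e^{-x^2/8}$ dominates $L/x^4$ but still exceeds $L^{1/5}x^{-16/5}$, i.e.\ for $x$ of moderate size. There I would take $\varepsilon$ depending on $x$ and use the local Gaussian density: near $x$ the density is $\ll e^{-(x-\varepsilon)^2/2}$ rather than $O(1)$, so the $\varepsilon$-term also decays rapidly in $x$, and optimizing $\varepsilon$ should recover the stated power. If this bookkeeping fails to give exactly $16/5$, I would follow Hall and Heyde's own argument, which handles this range by a conditional (truncated) version of the embedding.
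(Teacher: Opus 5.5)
The paper does not prove this; it only cites Hall and Heyde. Your Steps 1--3 are the standard Skorokhod-embedding argument behind that theorem, and they are correct: $\mathbb E\lvert T_n-1\rvert^2\ll L$, then Burkholder--Davis--Gundy gives $\mathbb E\lvert S_n-W(1)\rvert^4\ll L$, then smoothing with $\varepsilon=L^{1/5}$ gives the uniform bound. Two things need repair.

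First, the reduction ``we may assume $L$ is small, since otherwise the bound is trivial'' is false. For $L\geq 1$ and large $\lvert x\rvert$ the right-hand side is below $1$. In fact the inequality as written fails for large $L$. Take $n=1$, $X_1=\pm a$ with probability $\frac12$ each, and $x=a/2$. Then $\lvert\mathbb P(S_1\le x)-\Phi(x)\rvert\to\frac12$. But $L=a^4+(a^2-1)^2\asymp a^4$, so the right-hand side is $\asymp a^{4/5}a^{-16/5}\to 0$. The statement therefore has to be read with a hypothesis such as $L\le 1$. This is harmless for the paper, where $L\to 0$ in every application. Under $L\le 1$, the range $c_0<L\le 1$ (with $c_0$ a small absolute constant) is covered by your choice $\varepsilon=\lvert x\rvert/2$, since $e^{-x^2/8}+Lx^{-4}\ll L^{1/5}(1+\lvert x\rvert)^{-16/5}$ there.

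Second, Step 4 is left unfinished, but no truncated embedding is needed; your $x$-dependent $\varepsilon$ closes it in one line. For $\lvert x\rvert\le 1$ the uniform bound already suffices. For $\lvert x\rvert\ge 1$ and $L\le c_0$, take $\varepsilon=L^{1/5}(1+\lvert x\rvert)^{4/5}$, which is at most $\lvert x\rvert/2$.

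\begin{itemize}
\item The probability term is then exactly $L\varepsilon^{-4}=L^{1/5}(1+\lvert x\rvert)^{-16/5}$.
\item The Gaussian term satisfies $\lvert\Phi(x\pm\varepsilon)-\Phi(x)\rvert\le\varepsilon\sup_{\lvert t\rvert\ge\lvert x\rvert/2}e^{-t^2/2}\ll L^{1/5}(1+\lvert x\rvert)^{4/5}e^{-x^2/8}$, which is smaller.
\end{itemize}

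This choice is where the exponent $16/5=4\cdot\frac45$ comes from. A case split on whether $L^{1/5}e^{x^2/40}\le\lvert x\rvert/2$ even gives $\ll L^{1/5}e^{-x^2/10}+Lx^{-4}\ll L^{1/5}(1+\lvert x\rvert)^{-4}$.

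As you suspected, your intermediate bound $\min\bigl(L^{1/5},\,Lx^{-4}+e^{-x^2/8}\bigr)$ cannot be interpolated to the target on its own. Its Gaussian term carries no factor of $L$.
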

\subsection{Almost sure upper bound} Equipped with the above result, we may readily prove an almost sure upper bound for $\sum_{n\in\mathcal A} f(n)$, provided our approximation is suitably strong. In order to access the tails at $x\approx C\sqrt{\log k}$ which occur with probability $\approx k^{-C^2/2}$ for the Gaussian, we will need an approximation that is at least this good. We are able to achieve this in the case of polynomial images, but not short intervals -- we will discuss this in more detail in the following subsections. Using our quantitative approximation at several spread out scales together with a slow variation property between the scales (see Proposition \ref{prop:slowVar} and Lemma \ref{lemma:slowVarPoly}), the Borel-Cantelli lemma establishes the almost sure upper bound. 

\subsection{Almost sure lower bound} Establishing an almost sure lower bound is where the main novelty of this paper lies. The goal here is obtaining a quantitative multivariate central limit theorem for 
\begin{equation*}
    \left(\frac{1}{\sqrt{\lvert\mathcal A_1\rvert}}\sum_{n\in\mathcal A_1}f(n), \frac{1}{\sqrt{\lvert\mathcal A_2\rvert}}\sum_{n\in\mathcal A_2}f(n), \ldots, \frac{1}{\sqrt{\lvert\mathcal A_k\rvert}}\sum_{n\in\mathcal A_k}f(n) \right), 
\end{equation*}
where each $\mathcal A_l\subseteq [1, N_l]\cap \mathbb N$ for various integers $N_1, N_2, \ldots, N_k$. To do this, roughly speaking, we will first apply Proposition \ref{prop:quantCLT} to linear combinations of the normalized sums $\lvert\mathcal A_l\rvert^{-\frac{1}{2}}\sum_{n\in\mathcal A_l} f(n)$, establishing approximate joint Gaussianity. We will then bootstrap this to a quantitative multivariate central limit theorem, by using the following recent key result of Bobkov and G\"otze \cite{BobkovGoetze2024} to bound the Kantorovich--Wasserstein distance between the multivariate distributions in terms of the Kolmogorov distances of the corresponding linear combinations. 

\begin{prop}[\cite{BobkovGoetze2024}, Theorem 1.1]\label{prop:multivarCLT} Let $X$ and $Y$ be $\mathbb R^k$-valued random variables with $\|X\|_p\leq b$ and $\|Y\|_p\leq b$ for some $p>1$ and $b\geq 0$, where $\|Z\|_p = \left( \mathbb E\lvert Z\rvert^p\right)^{\frac{1}{p}}$. Then for $q$ such that $1/p + 1/q=1$, we have
\begin{multline*}
    \sup_{\|u\|_{\emph{Lip}}\leq 1} \lvert \mathbb EuX - \mathbb EuY\rvert \\ \ll b^{1-2/(qk+2)} \left(\sup_{\lvert \theta\rvert = 1} \int_{-\infty}^\infty \left\lvert\mathbb P\left(\sum_{l=1}^k \theta_l X_l\leq x \right)- \mathbb P\left(\sum_{l=1}^k \theta_l Y_l\leq x\right)\right\rvert\text{d} x\right)^{2/(kq+2)}. 
\end{multline*} Here the supremum is taken over all functions $u:\mathbb R^k\to\mathbb R$ with Lipschitz semi-norm at most $1$. 
\end{prop}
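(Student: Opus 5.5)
The plan is the standard smoothing-plus-Fourier argument, carried out in polar coordinates so that only one-dimensional projections enter. Write $\varepsilon$ for the supremum over unit $\theta$ of the $L^1$ distance between the distribution functions of $\langle\theta,X\rangle$ and $\langle\theta,Y\rangle$. Fix $u$ with $\|u\|_{\mathrm{Lip}}\le1$; subtracting a constant, we may take $u(0)=0$, so $|u(x)|\le|x|$.

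\textbf{Step 1 (truncation).} Choose a radius $R\ge b$ and replace $u$ by a Lipschitz function $u_R$ that agrees with $u$ on the ball $|x|\le R$ and is supported in $|x|\le 2R$. The error is controlled by
\begin{equation*}
\mathbb E|X|\mathbf 1_{|X|>R}\le \|X\|_p^p R^{1-p}\le b^pR^{1-p},
\end{equation*}
and the same bound holds for $Y$.

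\textbf{Step 2 (Gaussian smoothing).} Let $Z$ be a standard Gaussian vector in $\mathbb R^k$, independent of $X$ and $Y$, and let $\sigma>0$. Replacing $X$ by $X+\sigma Z$ and $Y$ by $Y+\sigma Z$ costs at most $\sigma\,\mathbb E|Z|\ll\sigma\sqrt k$, since $u_R$ is Lipschitz.

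\textbf{Step 3 (Fourier bound on the smoothed difference).} By Parseval,
\begin{equation*}
\mathbb E u_R(X+\sigma Z)-\mathbb E u_R(Y+\sigma Z)=(2\pi)^{-k}\int_{\mathbb R^k}\overline{\widehat{u_R}(t)}\,e^{-\sigma^2|t|^2/2}\bigl(\varphi_X(t)-\varphi_Y(t)\bigr)\,dt .
\end{equation*}
Pass to polar coordinates $t=r\theta$ and integrate by parts in one dimension:
\begin{equation*}
\varphi_X(r\theta)-\varphi_Y(r\theta)=-ir\int e^{irx}\bigl(F_\theta(x)-G_\theta(x)\bigr)\,dx ,
\end{equation*}
where $F_\theta$ and $G_\theta$ are the distribution functions of $\langle\theta,X\rangle$ and $\langle\theta,Y\rangle$. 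Hence $|\varphi_X(r\theta)-\varphi_Y(r\theta)|\le r\varepsilon$. On the other hand, $|\widehat{u_R}|\le\|u_R\|_{L^1}\ll R^{k+1}$. Therefore the smoothed difference is at most
\begin{equation*}
\ll_k R^{k+1}\varepsilon\int_0^\infty r^{k}e^{-\sigma^2r^2/2}\,dr\ll_k R^{k+1}\varepsilon\,\sigma^{-k-1}.
\end{equation*}
A sharper estimate would use $L^2$ bounds on $\widehat{u_R}$ together with Cauchy–Schwarz and the trivial bound $|\varphi_X-\varphi_Y|\le2$ for interpolation; this trades powers of $R$ for powers of $\sigma^{-1}$.

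\textbf{Step 4 (optimization).} Add the three errors:
\begin{equation*}
b^pR^{1-p}+\sigma+R^{a}\sigma^{-c}\varepsilon
\end{equation*}
for suitable exponents $a$ and $c$. Optimize first in $\sigma$, then in $R$. Homogeneity (rescaling $X$ and $Y$ by $b$) forces the answer to have the form $b\,(\varepsilon/b)^{\gamma}$. The target exponent $\gamma=2/(kq+2)$, with $q=p/(p-1)$, should come out of balancing the moment tail $R^{1-p}$ against the polynomial growth in $R$ of the Fourier term.

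\textbf{Main obstacle.} The difficulty lies in Step 3, specifically in obtaining precisely the exponent $2/(kq+2)$. The crude $L^1$ bound on $\widehat{u_R}$ loses too many powers of $R$. To fix this, I would first try the following: apply the bound $r\varepsilon$ only for $r\le T$, use the Gaussian factor $e^{-\sigma^2r^2/2}$ to discard $r>T$, and estimate $\widehat{u_R}$ in weighted $L^2$ via Plancherel ($\|u_R\|_2\ll R^{1+k/2}$). This gives a dependence of the form $R^{k/2}$ rather than $R^{k}$, and balancing the resulting terms should then produce the stated exponent.
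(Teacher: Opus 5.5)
The paper does not prove this proposition; it quotes it from Bobkov--G\"otze. Your argument therefore has to stand on its own, and as it stands it has a genuine gap in Step 3.

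Your truncation, the Gaussian smoothing, and the projection bound $\lvert\varphi_X(t)-\varphi_Y(t)\rvert\le\lvert t\rvert\varepsilon$ are all correct. The trouble is that Step 3 uses only the size $\lvert u_R\rvert\ll R$ of the test function and never its Lipschitz bound. You also pay for the factor $\lvert t\rvert$ from the projection bound with the Gaussian, i.e.\ with a power of $\sigma^{-1}$.

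\textbf{What your bounds actually give.} The crude $L^1$ bound gives a smoothed error $(R/\sigma)^{k+1}\varepsilon$. After optimizing, this yields the exponent $p/((k+2)p+(k+1)q)$. Your proposed repair does not fix this. Cauchy--Schwarz with $\|\widehat{u_R}\|_2=(2\pi)^{k/2}\|u_R\|_2\ll R^{1+k/2}$ against $\lvert t\rvert\varepsilon e^{-\sigma^2\lvert t\rvert^2/2}$ gives $(R/\sigma)^{(k+2)/2}\varepsilon$. The cutoff at $r\le T$ only reproduces the Gaussian scale $T\approx\sqrt k/\sigma$. Balancing against $b^pR^{1-p}+\sigma$ then yields the exponent $2p/((k+4)p+(k+2)q)$, e.g.\ $1/(k+3)$ rather than $1/(k+1)$ when $p=q=2$. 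So the claim that balancing ``should then produce the stated exponent'' is false: you are off by exactly one factor of $R/\sigma$.

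\textbf{The missing idea.} Use the Lipschitz property on the Fourier side. Since $it\,\widehat{u_R}(t)=\widehat{\nabla u_R}(t)$, Plancherel gives
\[
\|\,\lvert t\rvert\widehat{u_R}\|_2=(2\pi)^{k/2}\|\nabla u_R\|_2\ll(2\pi)^{k/2}\mathrm{vol}(B_{2R})^{1/2},
\]
which is of order $R^{k/2}$ instead of $R^{1+k/2}$. The factor $\lvert t\rvert$ is then cancelled exactly by the projection bound rather than by the Gaussian. Write the integrand as $\overline{\lvert t\rvert\widehat{u_R}(t)}\cdot e^{-\sigma^2\lvert t\rvert^2/2}(\varphi_X(t)-\varphi_Y(t))/\lvert t\rvert$ and use $\lvert\varphi_X-\varphi_Y\rvert/\lvert t\rvert\le\varepsilon$. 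Cauchy--Schwarz then gives
\[
\lvert\mathbb E u_R(X+\sigma Z)-\mathbb E u_R(Y+\sigma Z)\rvert\ll(2\pi)^{-k/2}\mathrm{vol}(B_{2R})^{1/2}\,\varepsilon\Bigl(\int_{\mathbb R^k}e^{-\sigma^2\lvert t\rvert^2}\,dt\Bigr)^{1/2}=\frac{(R/\sigma)^{k/2}\,\varepsilon}{\Gamma(k/2+1)^{1/2}}.
\]

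\textbf{Finishing the optimization.} Adding your other two errors, the total is $b^pR^{1-p}+\sigma\sqrt k+\Gamma(k/2+1)^{-1/2}(R/\sigma)^{k/2}\varepsilon$. Substitute $\tau=\sigma\sqrt k$ and use $\Gamma(k/2+1)\ge (k/2e)^{k/2}$. The last term becomes at most $(\sqrt{2e}\,R/\tau)^{k/2}\varepsilon$. Optimizing in $\tau$ leaves $b^pR^{1-p}+R^{k/(k+2)}\varepsilon^{2/(k+2)}$ with absolute constants. Optimizing in $R$, using $(p-1)q=p$, gives exactly $b^{1-2/(qk+2)}\varepsilon^{2/(qk+2)}$.

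This also resolves a point you left open. The implied constant in the statement must be independent of $k$, because the paper applies it with $k\to\infty$; you only aimed for $\ll_k$.
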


This is a more direct approach compared to the use of Stein's method via exchangeable pairs in the work of Harper \cite{Harper2013ZetaMax, Harper2023_ASLF}. In particular, the author was unable to adapt the latter method to handle the complex dependencies between large primes $p$ and $q$ such that $p\mid n$ and $q\mid n+1$ when dealing with $\sum_{n\leq N} f(n(n+1))$, say. The advantage of our approach is that we obtain a quantitative multivariate central limit theorem purely from the one-dimensional martingale structure. See also the recent work of Kowalski and Untrau \cite{KowalskiUntrau2025Wasserstein}, Humphries \cite{Humphries2025QuantitativeEquidistribution}, as well as earlier work of Saksman and Webb \cite{saksman2020riemann} on the use of the Kantorovich--Wasserstein distance in number theory. 

Equipped with a quantitative multivariate central limit theorem, we may compare the distribution of our $k$ sums to the distribution of $k$ Gaussian random variables with a prescribed covariance structure governed by the arithmetic of the set $\mathcal A$. Given this information, we may readily establish the almost sure lower bound by an application of Proposition \ref{prop:multivarCLT} and the Borel-Cantelli lemma. 

\subsection{Arithmetic input} After applying Proposition \ref{prop:quantCLT} to $\sum_l\theta_l\lvert \mathcal A_{N_l}\rvert^{-\frac{1}{2}}\sum_{n\in\mathcal A_l}f(n)$ for Rade-macher (or Steinhaus) $f$, what we essentially have to control is the number of solutions to the equation $$n_1n_2n_3n_4=\square \text{ (or } n_1n_2=n_3n_4\text{)}$$ with $n_i\in\mathcal A_{N_{l_i}}$ for $1\leq i\leq 4$ coming from (at most) four different scales. Henceforth we will refer to this as the \emph{fourth moment equation}. 
In each of the examples we consider in the paper, we would like for this equation to ``only'' (in an asymptotic sense) have the trivial solutions where the $n_i$ are equal in pairs, contributing approximately $N_{l_i}N_{l_j}$ solutions for $1\leq i, j\leq 4$. This means that we would like to rule out the many non-trivial solutions that occur in the full ranges of integers. In the following subsections, we will describe how we achieve this, as well as the subtleties in the previously described method for the examples we consider in this paper. 

\subsubsection{Polynomial images} This case will be our most streamlined application of the method as described above. 
We will utilize the main results of \cite{KlurmanShkredovXu2023, ChinisShala2025}, giving a power-saving bound on the number of non-trivial solutions to the fourth moment equations  $$P(n_1)P(n_2)P(n_3)P(n_4) = \square \text{ (or  } P(n_1)P(n_2) = P(n_3)P(n_4)\text{)}$$ with all $n_i\leq N$. This allows us to choose the $N_l$ for $1\leq l\leq k$ so that we can control the covariances of the sums $N_l^{-\frac{1}{2}}\sum_{n\leq N_l} f(P(n))$ quite crudely, by simply bounding the number of non-trivial solutions to the equation with $n_i$ ranging up to the maximum of the $N_{l_i}$. 

However, the martingale structure that we use forces us to combine the power-saving bound above with upper bounds for (very) smooth values of polynomial images, resulting in a quantitative bound of the form $\exp\left(-\sqrt{\log X}\right)$ in the martingale central limit theorem (where $X$ is so that $\log N_l\asymp \log X$ for all $1\leq l\leq k$); see Corollary \ref{cor:quantCLTpoly}. Nonetheless, this is certainly strong enough to allow us to access the tails that occur with probability $\approx (\log X)^{-C^2/2}$, leading to the almost sure upper bound as described before.

\begin{remark}\label{rmk: wangXumoments} One could also obtain the almost sure upper bound in the Steinhaus case by evaluating the $2k$-th  moment of $\sum_{n\leq N} f(P(n))$ for $k\approx \log\log N$, as was done in the work of Wang and Xu \cite{Wang-Xu}. A careful inspection of their proof shows that this is indeed the uniformity they obtain. This appears harder to do in the Rademacher case.\footnote{We thank Christopher Atherfold for discussions pertaining to this remark.}  \end{remark}

For the lower bound, passing to the multivariate central limit theorem will result in a further loss, namely a bound of the form $\sqrt k\exp\left(-\frac{\sqrt{\log X}}{k}\right)$ for the approximation; see Corollary \ref{cor:MCLTpoly}. However, it will be sufficient to choose $k=(\log X)^{\varepsilon_0}$ for some small $\varepsilon_0>0$ in order to create large fluctuations of order $\approx \sqrt{2\log k}\approx \sqrt{\log\log X}$ almost surely. 

\subsubsection{Integers in a short interval} Attempting to follow the strategy as in the polynomial case here runs into the following difficulties. Firstly, the bounds we have for the number of non-trivial solutions to $n_1n_2n_3n_4=\square$ (or $n_1n_2=n_3n_4$ in the Steinhaus case) are much weaker and get worse as $H$ gets bigger. Secondly, even when $H=N^{\alpha}$ in which case we have power-saving bounds on the number of non-trivial solutions, as before, the very smooth solutions (including the trivial ones) yield a martingale central limit theorem with a quantitative bound of shape roughly $\exp\left(-\sqrt{\log X}\right)$. This is not enough to capture the large fluctuations of size $C\sqrt{\log X}$ that occur probability roughly $X^{-C^2/2}.$ 

\begin{remark}
    When $k=1$, the above strategy nonetheless yields a significant quantitative improvement of the central limit theorem of Chatterjee and Soundararajan \cite{ChatterjeeSoundararajan2012}, but we do not state this in the paper. We note that the result of Soundararajan and Xu \cite{soundararajan2023clt} is also quantitative, but stated in a different form in terms of the characteristic function, with an apparent large loss of $e^{t^2/2}$. However, it appears that by being less crude with the bound $|1+it|\leq e^{t^2/2}$ for large $t$ in their work, applying Esseen's inequality (truncated Fourier inversion) to their result would probably recover a quantitative central limit theorem of similar strength. 
\end{remark}

To bypass these issues, we condition on the values of $f$ on the small primes, and approximate the sums $\sum_{N_l-H_l<n\leq N_l} f(n)$ for most $1\leq l\leq k$ with a weighted sum of independent random variables $f(p)$ for large primes $p$. This will be advantageous, as the weaker saving we have on the number of non-trivial solutions to the fourth moment equation will only be used to control the $\approx k^2$ conditional covariances, and will not play a role in the quantitative strength of the conditional martingale central limit theorem.

In controlling the conditional covariances by counting solutions to the equation $n_1n_2n_3n_4 = \square$ (or $n_1n_2=n_3n_4$) where the $n_i$ are potentially coming from different scales, we have to be mindful about the choice of scales. This is due to the many non-trivial solutions (of the same order of magnitude as the trivial solutions) which are obtained by scaling, such as solutions of the form $(n_1)(2n_1)(n_3)(2n_3)=\square$, say. 

When $H$ is greater than roughly $\frac{N}{\log N}$, we implement the strategy used in the work of Soundararajan and Xu \cite{soundararajan2023clt}, throwing out integers with atypically many prime factors, resulting in the desired control of the fourth moment equation as long as $H\leq \frac{N}{(\log N)^c}$ with $c>2\log 2-1$; see Proposition \ref{prop:CountAt4Scales}. For us, however, it is crucial that we obtain a sufficiently strong quantitative bound for the number of such integers; see Lemmas \ref{lemma:tooManyPrimes} and  \ref{lemma:ignoreTooManyPrimesShort}.

\subsubsection{Other examples and limitations} The method used in this paper may be used to prove almost sure lower bounds on large fluctuations of other examples, such as: shifted primes, sums of squares in short intervals \cite{soundararajan2023clt}, rough integers \cite{Xu2024btran}, and exponential sums with random multiplicative coefficients $\sum_{n\leq N} f(n)e(n\theta)$ \cite{BenatarNishryRodgers2022, Hardy2024imrn, soundararajan2023clt}. However, we are not able to handle the case of restricted number of prime factors $\omega(n)\leq w=o(\log\log N)$ for $n\le N$ considered in \cite{Hough2011, Harper2013}, for the number of non-trivial solutions to the fourth moment equation there is very large. Even when $w$ is finite, the saving over the number of trivial solutions is at most double-logarithmic in $N$, resulting in a very small choice for the number of scales $k$ that we can hope to use. It would be interesting to determine whether this is a reflection of the truth, that is, whether the large fluctuations of $\sum_{n\leq N,\text{ } \omega(n)\leq w} f(n)$ differ from what would be expected from the approximate Gaussian distribution. 

In another direction, it would also be interesting to prove almost sure upper bounds for other examples, such as shifted primes. 
In contrast with short intervals, say, it is not clear how one would proceed here, as this sum likely cannot be related to a random Euler product --- this is an ongoing investigation. 

\section{Auxiliary Results}

In this section, we collect and/or prove some auxiliary results that we will use throughout our proofs.

\subsection{Number theory results} We start by stating a result of Nair and Tenenbaum \cite{NairTenenbaum1998} (see also the work of Henriot \cite{Henriot2012_NairTenenbaumUniform}), in a less general form that will be sufficient for us. This is a generalization of a well-known lemma of Shiu. 

\begin{prop}\label{prop:nair-tenenbaum}
    Let $F$ be a non-negative multiplicative function such that there exists a constant $A\geq 1$ and $\epsilon >0$ such that $F(m)\leq Am^{\epsilon}$ for all $m\in\mathbb N$. Let $Q\in\mathbb Z[x]$ be a fixed polynomial of degree $d$ with no fixed prime factor, and let $\rho(n)$ be the number of solutions to $Q(x)\equiv 0\pmod n$. Suppose that $Q$ has $r$ distinct irreducible factors $Q_1, Q_2, \ldots, Q_r$, and write $Q = \prod_{i=1}^r Q_i^{\gamma_i}$. Let $\rho_i(n)$ be the number of solutions to $Q_i(x)\equiv 0\pmod n$ for $1\leq i\leq r$. Denote the discriminant of the square-free kernel of $Q$ by $D$. If $\epsilon<\frac{1}{8d^2}$, then 
    \begin{multline*}
        \sum_{N<n\leq N+H} F(\lvert Q(n)\rvert ) \\ \ll H\prod_{p\leq N}\left(1-\frac{\rho(p)}{p}\right) \sum_{\substack{n_1^{\gamma_1}n_2^{\gamma_2}\cdots n_r^{\gamma_r}\leq N \\ (n_i, n_j)=1 \text{ }\forall i\neq j \\ (n_i, D)=1 \text{ }\forall i}} F(n_1^{\gamma_1}\cdots n_r^{\gamma_r})\frac{\rho_1(n_1)\rho_2(n_2)\cdots \rho_r(n_r)}{n_1n_2\cdots n_r},
    \end{multline*}
    whenever $N$ is large enough, and $N^{4d^2\epsilon}\leq H\leq N$. 
\end{prop}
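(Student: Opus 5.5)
This is a special case of the Nair--Tenenbaum theorem in the multi-polynomial form made uniform by Henriot, so the plan is to reduce to that statement rather than to redo the underlying sieve. The argument behind it is the familiar Shiu/Wolke one. For each $n$ one factors $\lvert Q(n)\rvert = a\,b$, where $a$ is the largest divisor built from small primes with $a\leq N^{\delta}$. One uses multiplicativity together with the bound $F(m)\leq A m^{\epsilon}$ to bound $F(\lvert Q(n)\rvert)$ by $F(a)$ times a controlled factor coming from $b$. The number of $n\in(N,N+H]$ with a prescribed $a$, and with $b$ free of small primes, is then counted by an upper bound sieve (Brun or Selberg) in residue classes modulo $a$. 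The constraints $\epsilon<\frac{1}{8d^2}$ and $H\geq N^{4d^2\epsilon}$ are exactly what make the losses $b^{\epsilon}\leq N^{d\epsilon}$ and the sieve level $N^{\delta}$ compatible with an interval of length $H$.

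Concretely, the steps are as follows.

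First, reduce to the square-free kernel. Write $Q=\prod_{i}Q_i^{\gamma_i}$ and consider the vector of values $(\lvert Q_1(n)\rvert,\ldots,\lvert Q_r(n)\rvert)$. Define the function of $r$ variables
\[
G(m_1,\ldots,m_r)=F(m_1^{\gamma_1}\cdots m_r^{\gamma_r}).
\]
By multiplicativity of $F$, $G$ is multiplicative in Henriot's multi-variable sense. Moreover $G(m_1,\ldots,m_r)\leq A(m_1\cdots m_r)^{d\epsilon}$, which lies in his admissible class once $\epsilon$ is small in terms of $d$; this is where the $\frac{1}{8d^2}$ condition enters. The one point needing care is that $F(\lvert Q(n)\rvert)$ is not literally $G(\lvert Q_1(n)\rvert,\ldots)$, because the $Q_i(n)$ can share primes. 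Such primes divide the resultants, hence $D$, and so there are $O_Q(1)$ of them. I would split off the $D$-part of $Q(n)$: sort $n$ by residue class modulo a bounded power of the primes dividing $D$, bound the $F$-value of the $D$-part by a constant (it is bounded on the relevant classes up to a negligible tail), and use multiplicativity on the coprime part.

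Second, apply Henriot's theorem (equivalently, Nair--Tenenbaum) to $G$ and the polynomials $Q_1,\ldots,Q_r$. This gives
\[
H\prod_{p\leq N}\Bigl(1-\frac{\rho(p)}{p}\Bigr)\sum_{\substack{n_1\cdots n_r\leq N\\ (n_i,n_j)=1,\ (n_i,D)=1}}G(n_1,\ldots,n_r)\frac{\rho_1(n_1)\cdots\rho_r(n_r)}{n_1\cdots n_r}.
\]
It uses that $\rho(p)=\sum_i\rho_i(p)$ for $p\nmid D$, and that $\rho(p)<p$ for all $p$ since $Q$ has no fixed prime divisor. Finally, enlarging the range $n_1\cdots n_r\leq N$ to the stated range $n_1^{\gamma_1}\cdots n_r^{\gamma_r}\leq N$ (or adjusting the truncation level of the sieve) only affects constants, by the usual Rankin-type monotonicity argument.

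The main obstacle is the first step: matching the hypotheses exactly. Henriot's class requires $F(p^{k})\leq A^{k}$-type control as well as $F(m)\ll m^{\epsilon}$, and the first of these must be checked for $G$ built from $F$. For the applications in this paper, $F$ is a divisor-type function, so this holds automatically. In general I would first try to deduce it from $F(m)\leq Am^{\epsilon}$ after absorbing the high prime powers into the $b^{\epsilon}$ loss. A second, more minor, point is that the primes dividing $D$ must be handled uniformly.
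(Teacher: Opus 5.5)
Your overall route (quote the Nair--Tenenbaum theorem in Henriot's multivariable form) is the same as the paper's, which does not prove the proposition but cites it. The genuine gap is in the hypothesis on $F$, at exactly the point you flag as the main obstacle.

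The cited theorem works with the class $F(p^\nu)\le A^\nu$ together with $F(m)\le Bm^{\epsilon}$. The first condition cannot be deduced from the second, and in fact the proposition as literally stated is false. Take $Q(x)=x$, $F(m)=m^{1/10}$ and $H=N$. All the stated hypotheses hold, yet the left-hand side is $\asymp N^{11/10}$ while the right-hand side is $\asymp N\cdot(\log N)^{-1}\cdot N^{1/10}$. For $\deg Q\ge 2$ the discrepancy is even a power of $N$.

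So your plan to recover prime-power control by absorbing it into the $b^{\epsilon}$ loss cannot work: the obstruction is that $F$ may be unbounded on the primes themselves. In the Shiu/Nair--Tenenbaum argument the rough part $b$ is controlled by $A^{\Omega(b)}=O(1)$, because $b$ has boundedly many prime factors, all of them large. The crude bound $F(m)\ll m^{\epsilon}$ is affordable only on a sparse exceptional set; $b^{\epsilon}$ by itself is a power of $N$. The correct fix is to add the hypothesis $F(p^\nu)\le A^\nu$ for all prime powers. Every function the paper actually feeds in ($\tau_3$, $(1+\varepsilon)^{\Omega}$, $2^{-\Omega}$) satisfies it, and with it your reduction goes through.

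There are also two smaller corrections.

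First, your worry about shared primes is unfounded. The identity $F(\lvert Q(n)\rvert)=G(\lvert Q_1(n)\rvert,\ldots,\lvert Q_r(n)\rvert)$ holds exactly. Multiplicativity of $F$ gives $G(a_1b_1,\ldots,a_rb_r)=G(a_1,\ldots,a_r)G(b_1,\ldots,b_r)$ whenever $(a_1\cdots a_r,b_1\cdots b_r)=1$, which is all the multivariable class asks for. So splitting off the $D$-part is unnecessary. Bounding $F$ of that part by $O(1)$ would also be unjustified, since it can be a large prime power.

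Second, passing from $n_1\cdots n_r\le N$ to $n_1^{\gamma_1}\cdots n_r^{\gamma_r}\le N$ \emph{shrinks} the range rather than enlarging it. The stated bound is therefore stronger than what the theorem delivers. You need the reverse comparison: the sum of the same summands over $n_1\cdots n_r\le N$ must be $\ll$ the sum over $n_1\cdots n_r\le N^{1/\max_i\gamma_i}$. The latter range is contained in the stated one. This comparison holds because both sums are $\asymp$ the corresponding Euler products, which differ by $\prod_{N^{1/\max_i\gamma_i}<p\le N}(1+O(1/p))=O(1)$. That step again uses that $F(p^{\gamma_i})$ is bounded, i.e.\ the missing hypothesis.
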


We now record a few corollaries of this result. 

\begin{lemma}\label{lemma:PolyBound}
    Let $Q\in\mathbb Z[x]$ be a polynomial. There exists a constant\footnote{In fact Proposition \ref{prop:nair-tenenbaum} gives a much more precise description of $c_Q$, but we will not need it here.} $c_Q>0$ such that 
    \begin{equation*}
        \sum_{N<n\leq N+H} \tau_3(\lvert Q(n)\rvert)\ll H (\log N)^{c_Q}
    \end{equation*}
    whenever $N^{1/3}\leq H \leq N$, where $\tau_3$ is the $3$-fold divisor function.
\end{lemma}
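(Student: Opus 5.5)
The plan is to apply Proposition \ref{prop:nair-tenenbaum} with $F=\tau_3$, which is multiplicative, non-negative, and satisfies $\tau_3(m)\leq A_\epsilon m^{\epsilon}$ for every $\epsilon>0$. Set $d=\deg Q$ and choose $\epsilon<\frac{1}{12d^2}$. Then $\epsilon<\frac{1}{8d^2}$, and also $4d^2\epsilon<\frac13$, so the hypothesis $N^{4d^2\epsilon}\leq H\leq N$ holds whenever $N^{1/3}\leq H\leq N$.

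Before applying the proposition, I reduce to a polynomial with no fixed prime factor. Write $Q = c\,Q_0$ with $c$ the content. For each prime $p$ that is a fixed divisor of $Q_0$ (there are only finitely many, all at most $d$), split $n$ into residue classes modulo a suitable fixed modulus $M$ (a power of the product of those primes). On each class $n=Mm+b$, the polynomial $Q_0(Mm+b)$ equals a bounded integer $e_b$ times a polynomial $R_b(m)$ with no fixed prime factor. Submultiplicativity, $\tau_3(xy)\leq\tau_3(x)\tau_3(y)$, together with the boundedness of $c$ and $e_b$, reduces matters to bounding $\sum_{m\in I}\tau_3(|R_b(m)|)$ over an interval $I$ of length $\asymp H/M$ near $N/M$. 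The condition $H\geq N^{1/3}$ is preserved up to constants, so I would shrink $\epsilon$ slightly to absorb them. The degenerate case $Q$ constant is trivial. I expect this reduction to be the only fiddly step, since it is routine but requires checking that the hypotheses survive.

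Next I bound the right-hand side of the proposition. The Euler factor $\prod_{p\leq N}(1-\rho(p)/p)$ is at most $1$, so it can be dropped. For the main sum, I use $\tau_3(n_1^{\gamma_1}\cdots n_r^{\gamma_r})\leq \prod_i \tau_3(n_i^{\gamma_i})$ and drop the coprimality conditions, which gives at most
\begin{equation*}
\prod_{i=1}^r \sum_{n_i\leq N}\frac{\tau_3(n_i^{\gamma_i})\rho_i(n_i)}{n_i}\leq \prod_{i=1}^r\prod_{p\leq N}\Big(1+\sum_{k\geq1}\frac{\tau_3(p^{k\gamma_i})\rho_i(p^k)}{p^k}\Big).
\end{equation*}
For $p\nmid D$ (all but finitely many $p$), Hensel's lemma gives $\rho_i(p^k)=\rho_i(p)\leq d$. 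Hence each Euler factor is $1+O(d\cdot\tau_3(p^{\gamma_i})/p)+O(1/p^{2-o(1)})$, where $\tau_3(p^{\gamma_i})=\binom{\gamma_i+2}{2}$. For the finitely many primes dividing $D$, the restriction $(n_i,D)=1$ removes them altogether.

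By Mertens' theorem, the product is therefore $\ll(\log N)^{c_Q}$ with $c_Q=\sum_i d\binom{\gamma_i+2}{2}$ (any such constant suffices). This gives $\sum_{N<n\leq N+H}\tau_3(|Q(n)|)\ll H(\log N)^{c_Q}$, as required.
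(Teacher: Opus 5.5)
Your proposal is correct and takes the same route as the paper: apply Proposition~\ref{prop:nair-tenenbaum} to $F=\tau_3$ with $\epsilon$ small enough that $N^{4d^2\epsilon}\leq N^{1/3}$, then use submultiplicativity of $\tau_3$ and Mertens' theorem to bound the resulting weighted sum over $n_1,\ldots,n_r$ by a power of $\log N$. You are more careful than the paper's short proof in three places: you arrange the ``no fixed prime factor'' hypothesis by splitting into residue classes modulo a high power of the small fixed primes (the paper applies the proposition without this step), you keep track of the multiplicities $\gamma_i$ (which the paper suppresses), and you bound $\prod_{p\leq N}(1-\rho(p)/p)$ by $1$, which is safe and sufficient, whereas the paper replaces it by $(\log N)^{-\deg Q}$, a claimed saving that need not hold.
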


\begin{proof}
We have $\tau_3(n)\ll_{\epsilon} n^{\epsilon}$ for every $\epsilon>0$. Suppose $Q$ has $r$ distinct irreducible factors with the largest degree of an irreducible factor being $d$. Applying Proposition \ref{prop:nair-tenenbaum} to $\tau_3$ and the polynomial $Q$, and using the submultiplicativity of $\tau_3$, we obtain 
\begin{align*}
     \sum_{N<n\leq N+H} \tau_3(\lvert Q(n)\rvert) & \ll H (\log N)^{-\deg Q}\sum_{n_1\cdots n_r\leq N}\tau_3(n_1\cdots n_r)\frac{\rho_1(n_1)\cdots \rho_r(n_r)}{n_1\cdots n_r}\\ &  \ll H(\log N)^{-\deg Q} \left(\exp\left(\sum_{p\leq N} \frac{3d}{p} \right)\right)^r \ll H(\log N)^{3dr - \deg Q},
\end{align*} whenever $N\geq H\geq N^{4\deg^2 Q\epsilon}\geq N^{\frac{1}{3}}$ if $\epsilon$ is chosen small enough. 
\end{proof}

\begin{lemma}\label{lemma:tooManyPrimes}
    Let $\varepsilon>0$ and let $N$ and $H$ be large enough such that $H\geq N^{5\varepsilon}$. Let $\Omega(n)$ denote the number of prime factors of $n$, counted with multiplicity. Then there exists $\varepsilon'>0$ such that 
    \begin{equation*}
        \#\{N-H<n\leq N : \Omega(n)>(1+\varepsilon)\log\log N\}\ll \frac{H}{(\log N)^{\varepsilon'}}.
    \end{equation*}
\end{lemma}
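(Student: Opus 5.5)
We will use the Rankin trick (an exponential moment bound) together with the Nair--Tenenbaum bound, Proposition \ref{prop:nair-tenenbaum}, applied with the linear polynomial $Q(x)=x$. Fix $1<z<2$ to be chosen later in terms of $\varepsilon$. Then
\begin{equation*}
\#\{N-H<n\leq N : \Omega(n)>(1+\varepsilon)\log\log N\}\leq (\log N)^{-(1+\varepsilon)\log z}\sum_{N-H<n\leq N} z^{\Omega(n)} .
\end{equation*}
The function $F(n)=z^{\Omega(n)}$ is not of the form $F(m)\le Am^{\epsilon}$ uniformly, because of high prime powers. Indeed $z^{\Omega(2^k)}=2^{k\log_2 z}$, so $F(m)\le m^{\log_2 z}$, which grows polynomially.

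\textbf{Step 1: make Proposition \ref{prop:nair-tenenbaum} applicable.} We write $\Omega(n)\leq \Omega_y(n)+\Omega^y(n)$, where $\Omega_y$ counts primes below $y$ (with multiplicity) and $\Omega^y$ counts primes above $y$. Alternatively, we bound $F(m)\ll_{\delta} m^{\delta}$ by restricting $z$.

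The cleaner route is as follows. For any $\delta>0$ we have $z^{\Omega(m)}\le \tau(m)^{\log_2 z}\cdot C$? This fails for prime powers. Instead, we use the standard bound $z^{\Omega(m)}\ll_{z,\delta} m^{\delta}$, which holds when $z<2$. Indeed $z^{\Omega(m)}=\prod_{p^a\| m} z^a$. For $p> z^{1/\delta}$ we have $z^a\le p^{a\delta}$, and the finitely many primes $p\le z^{1/\delta}$ contribute at most $\prod_{p\le z^{1/\delta}}\sup_a z^a p^{-a\delta}$. This is infinite if $z>p^\delta$. So we handle small primes separately.

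Split $n=ab$ with $a$ being $z^{1/\delta}$-smooth. Integers with $\Omega(a)>\frac{\varepsilon}{2}\log\log N$ require either $a$ divisible by some $p^{e}$ with $p^e>(\log N)^{c}$, which is rare (the count is $\ll H(\log N)^{-c/2}$ using $H\ge N^{5\varepsilon}$ and trivial divisibility counting), or many distinct small primes, which is impossible since there are only boundedly many. Thus it suffices to bound the count for $\Omega(b)$ with $F(b)=z^{\Omega(b)}\mathbf 1_{(b,P)=1}$ extended multiplicatively. This $F$ satisfies $F(m)\le m^{\delta}$.

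\textbf{Step 2: apply the proposition.} Choose $\delta<1/8$; then $N^{4\delta}\le N^{5\varepsilon}\le H$ once $\delta\le\varepsilon$. Proposition \ref{prop:nair-tenenbaum} with $Q(x)=x$ ($\rho=1$, $d=1$) gives
\begin{equation*}
\sum_{N-H<n\le N}F(n)\ll H\prod_{p\le N}\Big(1-\frac1p\Big)\sum_{m\le N}\frac{F(m)}{m}\ll \frac{H}{\log N}\prod_{p\le N}\Big(1+\frac{z}{p}+\frac{z^2}{p^2}+\cdots\Big)\ll H(\log N)^{z-1}.
\end{equation*}
The Euler factor converges since $z<2\le p$ for the relevant primes.

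\textbf{Step 3: optimize.} Combining with the Rankin bound, and replacing $\varepsilon$ by $\varepsilon/2$ for the $b$-part, the count is $\ll H(\log N)^{z-1-(1+\varepsilon/2)\log z}$. The exponent $g(z)=z-1-(1+\varepsilon/2)\log z$ satisfies $g(1)=0$ and $g'(1)=-\varepsilon/2<0$. Hence taking $z=1+\varepsilon/4$, say, gives $g(z)=-\varepsilon'<0$, which yields the claim.

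The main obstacle is the technical one in Step 1: Proposition \ref{prop:nair-tenenbaum} needs $F(m)\le Am^{\epsilon}$ with small $\epsilon$, while $z^{\Omega}$ is large on powers of small primes. This is resolved by removing smooth prime-power parts, using the trivial bound $\#\{n\in(N-H,N]: q\mid n\}\le H/q+1$ for the exceptional $q$.
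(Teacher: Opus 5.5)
Your argument is correct, and its core is the paper's: Rankin's trick with $z^{\Omega(n)}$, then Proposition~\ref{prop:nair-tenenbaum} with $Q(x)=x$ to get $\sum_{N-H<n\le N}z^{\Omega(n)}\ll H(\log N)^{z-1}$, then $z$ slightly above $1$ so that $z-1-(1+\varepsilon)\log z<0$.

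The difference is your Step~1, and it rests on a misreading of the proposition's hypothesis. Proposition~\ref{prop:nair-tenenbaum} only needs $F(m)\le Am^{\epsilon}$ for \emph{one} fixed $\epsilon<1/8$ with $N^{4\epsilon}\le H$. The bound you wrote down yourself, $z^{\Omega(m)}\le m^{\log_2 z}$, is exactly of this form with $\epsilon=\log_2 z$, which is small because $z$ is near $1$. The paper applies the proposition directly in this way, with no smooth/rough split. In fact, with your own choices $z=1+\varepsilon/4$ and $\delta=\varepsilon$ (allowed when $\varepsilon<1/8$), one has $z^{1/\delta}=(1+\varepsilon/4)^{1/\varepsilon}<e^{1/4}<2$. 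So there are no ``small primes'' at all, and Step~1 is empty.

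What decoupling $\delta$ from $z$ does buy you is that the range condition $N^{4\delta}\le N^{5\varepsilon}\le H$ becomes immediate. In the direct route one must check $4\log_2 z\le 5\varepsilon$. This holds for $z=1+\varepsilon/4$, since $4\log_2(1+\varepsilon/4)\le \varepsilon/\log 2$. For small $\varepsilon$ it does not quite hold for the paper's choice $z=1+\varepsilon$, where $4\log_2(1+\varepsilon)\approx 5.77\varepsilon$; this is a harmless slip, cured by taking $z$ closer to $1$.

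Your handling of the extra case is fine: large powers of the boundedly many primes $\le z^{1/\delta}$ are counted via $\#\{n\in(N-H,N]: p^e\mid n\}\le H/p^e+1$ together with $H\ge N^{5\varepsilon}$.

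In a write-up, delete the false intermediate assertions:
\begin{itemize}
\item that $z^{\Omega}$ is not of the required form;
\item that $z^{\Omega(m)}\ll_{z,\delta}m^{\delta}$ for every $\delta$ whenever $z<2$.
\end{itemize}
Also define $F$ explicitly as the multiplicative function with $F(p^j)=z^j$ for $p>z^{1/\delta}$ and $F(p^j)=1$ for $p\le z^{1/\delta}$. As written, $z^{\Omega(b)}\mathbf 1_{(b,P)=1}$ has $P$ undefined and could be read as vanishing on every $n$ with a small prime factor.
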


\begin{proof}
The quantity we want to bound is certainly at most 
\begin{equation*}
    \frac{1}{\exp((1+\varepsilon)\log\log N\log(1+\varepsilon))}\sum_{N-H<n\leq N} (1+\varepsilon)^{\Omega(n)}.
\end{equation*}
Applying Proposition \ref{prop:nair-tenenbaum} with  $A=1, \epsilon = \log(1+\varepsilon)/\log 2=\varepsilon/\log 2 +O(\varepsilon^2)$ to the sum gives 
\begin{equation*}
    \sum_{N-H<n\leq N} (1+\varepsilon)^{\Omega(n)}\ll H(\log N)^{\varepsilon} = H\exp(\varepsilon\log\log N),
\end{equation*}
whenever $H\geq N^{4\epsilon}\gg N^{5\varepsilon}$. Putting the two bounds together gives that the quantity we want to bound is $\ll H/(\log N)^{(1+\varepsilon)\log(1+\varepsilon)-\varepsilon}$, so we may take $\varepsilon' = (1+\varepsilon)\log(1+\varepsilon)-\varepsilon = \frac{\varepsilon^2}{2} + O(\varepsilon^3)$. 
\end{proof}

\begin{lemma}\label{lemma:unSmoothSquarefree}
    Let $N$ be large, $\alpha>\frac{1}{2}$, and let $N\geq H=H(N)\geq N^{\frac{4\alpha + 1}{5}}$. The number of square-free integers in the short interval $(N-H, N]$ such that $P^+(n)>N^{\alpha}$ is 
    \begin{equation*}
        \gg H+O\left(\frac{H}{\log N} \right).
    \end{equation*}
\end{lemma}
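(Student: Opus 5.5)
We count square-free $n\in(N-H,N]$ with a prime factor $p>N^{\alpha}$ directly, by writing $n=pm$. Since $\alpha>\frac12$, such a prime is unique and appears to the first power. The required count is therefore
\begin{equation*}
\sum_{N^{\alpha}<p\leq N}\#\{m : \tfrac{N-H}{p}<m\leq \tfrac{N}{p},\ \mu^2(m)=1,\ p\nmid m\}.
\end{equation*}
The condition $p\nmid m$ is automatic, because $m\leq N/p<N^{1-\alpha}<p$. So it suffices to count square-free $m$ in intervals of length $H/p$, weighted over primes. It is easier to swap the order of summation: for each square-free $m\leq N^{1-\alpha}$ we count primes $p\in\bigl(\frac{N-H}{m},\frac Nm\bigr]$ with $p>N^{\alpha}$. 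The interval has length $H/m$ and sits near $x=N/m\geq N^{\alpha}$.

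The key input is a prime number theorem in short intervals of Huxley type: for $y\geq x^{7/12+\delta}$, the number of primes in $(x-y,x]$ is $\sim y/\log x$. Suppose we restrict to $m\leq M$ with $H/m\geq (N/m)^{7/12+\delta}$. Then each such $m$ contributes $(1+o(1))\frac{H}{m\log(N/m)}$ primes.

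Summing over square-free $m$ with $m\leq N^{1-\alpha-\eta}$ (which also keeps $p$ comfortably above $N^{\alpha}$) gives
\begin{equation*}
\sum_{\substack{m\leq M\\ \mu^2(m)=1}}\frac{H}{m\log(N/m)}\asymp H\int_{1}^{\log M}\frac{du}{\log N-u}\cdot\frac{6}{\pi^2}\gg H,
\end{equation*}
provided $\log M\gg\log N$. The error terms in the short-interval prime count, together with the boundary terms (for instance $m$ near the top of the range, or counting losses), will be absorbed into $O(H/\log N)$.

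The main step is checking the admissibility of $M$, and this is exactly what the exponent $\frac{4\alpha+1}{5}$ is designed to handle. Write $H=N^{\beta}$ and $m=N^{\mu}$. The condition $H/m\geq (N/m)^{\theta}$ reads $\beta-\mu\geq\theta(1-\mu)$, that is, $\mu\leq\frac{\beta-\theta}{1-\theta}$. We also need $\mu<1-\alpha$. With $\beta\geq\frac{4\alpha+1}{5}$ and a short-interval exponent $\theta$ available (e.g.\ $7/12$), the quantity $\frac{\beta-\theta}{1-\theta}$ is a positive constant. Hence we may take $M=N^{c}$ for some $c>0$, giving $\log M\asymp\log N$ and hence the bound $\gg H$.

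If Huxley's exponent turns out not to match the stated threshold, I would instead take a fixed small range $m\leq N^{c}$ and use whatever short-interval PNT fits. The lower bound only needs a positive proportion of the $m$, so the precise threshold $\frac{4\alpha+1}{5}$ presumably just reflects a convenient choice, such as $\theta=\frac{4}{5}$-type exponents in a cleaner known result. I expect the main obstacle to be precisely this bookkeeping: making the short-interval prime estimate uniform in $m$ and handling the error terms so that they total $O(H/\log N)$.
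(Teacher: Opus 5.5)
Your argument is correct, but it runs the count in the opposite order from the paper.

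\textbf{The paper's route.} It fixes the prime $p\in\left(N^{\alpha},(H^5/N)^{1/4}\right]$ and counts square-free cofactors in $\left(\frac{N-H}{p},\frac{N}{p}\right]$ using Pandey's short-interval asymptotic $\frac{6}{\pi^2}\frac{H}{p}+O\big((N/p)^{1/5}\big)$. The cutoff $(H^5/N)^{1/4}$ is chosen so that the summed error is $O(H/\log N)$. The hypothesis $H\geq N^{(4\alpha+1)/5}$ is exactly the condition $(H^5/N)^{1/4}\geq N^{\alpha}$. So the threshold comes from the exponent $\frac15$ for square-free numbers in short intervals, not from admissibility of your $M$.

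\textbf{Your route.} You fix a square-free $m\leq N^{c}$ and count primes in $\left(\frac{N-H}{m},\frac{N}{m}\right]$ by Huxley's theorem. This is legitimate because $\frac{4\alpha+1}{5}>\frac35>\frac{7}{12}$. State this inequality explicitly: it is the only place the hypothesis enters, and it makes your fallback paragraph unnecessary.

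\textbf{What each buys.} Your route actually gives more. It yields $\gg H$ for all $H\geq N^{7/12+\delta}$ (any fixed $\frac12<\alpha<1$), with a constant that stays bounded below at the endpoint $H=N^{(4\alpha+1)/5}$. By contrast, the paper's main term $\frac{6}{\pi^2}H\log\frac{5\beta-1}{4\alpha}$ (for $H=N^{\beta}$) vanishes there, because its prime range becomes empty. The paper's route is shorter and has an explicit error term. With only the asymptotic form of Huxley's theorem, your error is $o(H)$ rather than $O(H/\log N)$, which still suffices for the lower bound.

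One minor fix: if $H$ is close to $N$, the left endpoint $(N-H)/m$ can fall below $N^{\alpha}$. First replace $H$ by $\min\{H,N/2\}$, which is still $\asymp H$ and keeps every counted prime above $N^{\alpha}$.
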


\begin{proof}
    The number of such $n$ is at least
    \begin{equation*}
        \sum_{N^{\alpha} < p\leq (H^5/N)^{1/4}}\sum_{\frac{N-H}{p}<n\leq \frac{N}{p}}\mu^2(n).
    \end{equation*}
    According to (a weaker version of) \cite[Theorem 1.1]{Pandey2024Squarefree}, for the inner sum we obtain 
    \begin{equation*}
        \sum_{\frac{N-H}{p}<n\leq \frac{N}{p}}\mu^2 (n) = \frac{6}{\pi^2}\frac{H}{p} + O\left(\frac{N^{\frac{1}{5}}}{p^{\frac{1}{5}}} \right).
    \end{equation*}
    Summing over the primes $N^{\alpha}<p\leq (H^5/N)^{1/4}$, we obtain 
    \begin{equation*}
        \gg H+O\left(H/\log N \right),
    \end{equation*}
giving the desired conclusion. 
\end{proof}

\subsection{Probability results} We begin by adapting Proposition \ref{prop:quantCLT} to the complex case for the Steinhaus model. 

\begin{prop}\label{prop:quantCLT-complex}
    Let $X_1, X_2, \ldots, X_n$ be a complex square-integrable martingale difference sequence. 
    Let $S_n = \sum_{i=1}^n X_i$. Then 
    \begin{multline*}
\left\lvert\mathbb P(\mathfrak TS_n\leq x) - \frac{1}{\sqrt{2\pi}}\int_{-\infty}^xe^{-\frac{t^2}{2}}\emph{d}t\right\rvert \\ \ll \frac{\left(\sum_{i=1}^n \mathbb E\lvert X_i\rvert^4 + \mathbb E\left\lvert\sum_{i=1}^n \lvert X_i\rvert ^2 - 1\right\rvert^2 + \mathbb E\left\lvert\sum_{i=1}^n \left(X_i^2 +\overline{X_i^2}\right)\right\rvert^2\right)^{1/5}}{1+\lvert x\rvert^{\frac{16}{5}}},
    \end{multline*} 
    where $\mathfrak T$ denotes $\sqrt2$ times either the real or imaginary part of $S_n$. 
\end{prop}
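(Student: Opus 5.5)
The plan is to reduce directly to the real case, Proposition \ref{prop:quantCLT}. Treat the real part first; the imaginary part is identical after replacing $X_i$ by $-iX_i$, which is still a martingale difference sequence. Set $Y_i = \sqrt2\,\mathrm{Re}\, X_i = \frac{1}{\sqrt2}(X_i+\overline{X_i})$. Taking real parts of conditional expectations shows that $(Y_i)$ is a real square-integrable martingale difference sequence with respect to the same filtration. Moreover, $\sum_i Y_i = \mathfrak T S_n$, so Proposition \ref{prop:quantCLT} applies to $(Y_i)$. It remains to bound the two quantities appearing in that proposition by the three quantities in the statement.

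For the fourth moments, use $|Y_i|\le \sqrt2 |X_i|$, which gives $\sum_i \mathbb E|Y_i|^4 \le 4\sum_i\mathbb E|X_i|^4$. For the quadratic variation, expand
\begin{equation*}
Y_i^2 = \tfrac12\left(X_i^2 + 2|X_i|^2 + \overline{X_i^2}\right) = |X_i|^2 + \tfrac12\left(X_i^2+\overline{X_i^2}\right).
\end{equation*}
This yields
\begin{equation*}
\sum_i Y_i^2 - 1 = \left(\sum_i|X_i|^2 - 1\right) + \tfrac12\sum_i\left(X_i^2+\overline{X_i^2}\right).
\end{equation*}
By $|a+b|^2\le 2|a|^2+2|b|^2$, we get
\begin{equation*}
\mathbb E\left|\sum_i Y_i^2-1\right|^2 \le 2\,\mathbb E\left|\sum_i|X_i|^2-1\right|^2 + \tfrac12\,\mathbb E\left|\sum_i(X_i^2+\overline{X_i^2})\right|^2.
\end{equation*}

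Inserting these bounds into Proposition \ref{prop:quantCLT} gives the stated inequality, with the constants absorbed into $\ll$. For the imaginary part, $\sqrt2\,\mathrm{Im}\, X_i = \frac{1}{\sqrt2 i}(X_i-\overline{X_i})$. The same expansion gives $|X_i|^2 - \frac12(X_i^2+\overline{X_i^2})$, so only a sign changes, and the identical bound holds.

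There is no real obstacle here. The only point needing care is checking that the real and imaginary parts of a complex martingale difference sequence remain martingale differences with respect to the same filtration. This follows because conditional expectation commutes with complex conjugation, so $\mathbb E[\overline{X_i}\mid\mathcal F_{i-1}] = \overline{\mathbb E[X_i\mid\mathcal F_{i-1}]} = 0$. Square-integrability is inherited from that of the $X_i$.
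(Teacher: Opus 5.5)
Your proof is correct and follows the same route as the paper: it too applies Proposition \ref{prop:quantCLT} to the real and imaginary parts of $S_n$, and bounds the fourth-moment and quadratic-variation terms by expanding $(X_i\pm\overline{X_i})^2$ in terms of $|X_i|^2$ and $X_i^2+\overline{X_i^2}$. Your bookkeeping is more careful than the paper's one-line argument, since you track the $\sqrt2$ normalization and the factor $1/i$ for the imaginary part, and you sum over $i$ before applying $|a+b|^2\le 2|a|^2+2|b|^2$; the paper leaves these steps implicit in its termwise inequalities.
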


\begin{proof}
    This follows easily from applying Proposition \ref{prop:quantCLT} to the martingales $(S_n \pm \overline{S_n})/2$ and using the inequalities
    \begin{equation*}
        \left\lvert X_i \pm \overline{X_i}\right\rvert^4 \ll \lvert X_i\rvert^4 \hspace{0.5cm}\text{ and }\hspace{0.5cm} \left\lvert \left(X_i \pm \overline{X_i}\right)^2 - 1 \right\rvert^2 \ll \left\lvert \lvert X_i\rvert^2 - 1\right\rvert^2 + \left\lvert X_i^2  + \overline{X_i^2} \right\rvert^2.
    \end{equation*}
More details in a related setting are provided in the proof of Theorem 2.4 in \cite{soundararajan2023clt}. 
\end{proof}

Next, we borrow a result from Billingsley's book \cite{Billingsley1999Convergence}, bootstrapping fourth moment information in short intervals to a deviation estimate for the maximum of a sum of random variables. 

\begin{lemma}[Theorem 10.2 in \cite{Billingsley1999Convergence}] \label{lemma:billings} Let $S_N = X_1 + X_2 + \ldots + X_N$ be a sum of random variables. Let $u_1, u_2, \ldots, u_n$ be real numbers such that 
\begin{equation*}
    \mathbb P(\lvert S_j - S_i\rvert\geq\lambda)\leq\frac{1}{\lambda^4}\left(\sum_{i<k\leq j} u_k\right)^2 
\end{equation*} for all $1\leq i<j\leq n$. Then 
\begin{equation*}
    \mathbb P\left(\max_{k\leq n} \lvert S_k\rvert\geq\lambda \right) \ll \frac{1}{\lambda^4}\left(\sum_{1\leq k\leq n} u_k\right)^2 .
\end{equation*}
    
\end{lemma}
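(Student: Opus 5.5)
We plan to reduce the maximal inequality to a bound for a ``two-sided'' maximum, which can be controlled by an induction on $n$ that bisects the mass $U:=\sum_{k\le n}u_k$. Without loss of generality all $u_k\ge 0$: otherwise the hypothesis with $j=i+1$ forces the relevant increments to vanish almost surely, and we may merge indices. We introduce
\begin{equation*}
    L_n := \max_{0\le k\le n}\min\bigl(\lvert S_k\rvert, \lvert S_n-S_k\rvert\bigr),
\end{equation*}
with $S_0=0$. The first step is the elementary pointwise inequality $\max_{k\le n}\lvert S_k\rvert\le L_n+\lvert S_n\rvert$. Indeed, if $\lvert S_k\rvert> L_n$, then necessarily $\lvert S_n-S_k\rvert\le L_n$, and so $\lvert S_k\rvert\le \lvert S_n\rvert+L_n$. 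Consequently
\begin{equation*}
    \mathbb P\Bigl(\max_{k\le n}\lvert S_k\rvert\ge\lambda\Bigr)\le \mathbb P(L_n\ge\lambda/2)+\mathbb P(\lvert S_n\rvert\ge\lambda/2).
\end{equation*}
The hypothesis with $(i,j)=(0,n)$ bounds the second term by $16\lambda^{-4}U^2$. It therefore suffices to prove $\mathbb P(L_n\ge\lambda)\le K\lambda^{-4}U^2$ for an absolute constant $K$.

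For a single index $k$, we bound the two-sided event by the geometric mean of the hypotheses applied to $[0,k]$ and $[k,n]$. This gives
\begin{equation*}
    \mathbb P\bigl(\lvert S_k\rvert\ge\lambda,\ \lvert S_n-S_k\rvert\ge\lambda\bigr)\le\lambda^{-4}\Bigl(\sum_{i\le k}u_i\Bigr)\Bigl(\sum_{i>k}u_i\Bigr)\le \lambda^{-4}U^2.
\end{equation*}
The point is that each ``half'' of the product carries only part of the mass. Next we choose a splitting index $h$ such that $\sum_{i<h}u_i\le U/2$ and $\sum_{i>h}u_i\le U/2$. Any $k$ then lies in $[0,h-1]$ or in $[h,n]$. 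For $k<h$, the triangle inequality controls $\min(\lvert S_k\rvert,\lvert S_n-S_k\rvert)$ by the analogous two-sided minimum on the block $[0,h-1]$, plus error terms $\lvert S_{h-1}-S_k\rvert$ and $\lvert S_n-S_{h-1}\rvert$. The case $k\ge h$ is symmetric.

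The induction hypothesis applies to the two sub-blocks, whose $u$-mass is at most $U/2$. It contributes $2K(\lambda')^{-4}(U/2)^2$, which saves the factor $1/2$ needed to close the induction. The catch is that we must give up part of the threshold, replacing $\lambda$ by some $\lambda'<\lambda$. This is the main obstacle: the losses must be summable and still leave a constant $K$ independent of $n$.

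To handle this we will take thresholds of the form $\lambda'=\lambda(1-\delta)$ and use $(1-\delta)^{-4}\cdot\tfrac12<1$ for small fixed $\delta$. The leftover error events ($\lvert S_h\rvert$, or an increment across the split, exceeding $\delta\lambda$) are bounded directly by the hypothesis, each at most $\delta^{-4}\lambda^{-4}U^2$. This yields a recursion
\begin{equation*}
    K\ge \frac{K}{2(1-\delta)^{4}}+C\delta^{-4}.
\end{equation*}
This is solvable for $K$ once $\delta$ is fixed, say $\delta=1/10$. If this naive recursion turns out not to close cleanly, because the error terms carry an un-halved $U$, we will instead fall back on Billingsley's own device. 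There one first proves the estimate for $L_n$ under the product-form hypothesis above, using a dyadic chaining in which the threshold at depth $j$ is $\lambda\,c\,2^{-j/8}$ with $\sum_j c\,2^{-j/8}\le 1$, and then transfers it via the pointwise inequality $\max_{k\le n}\lvert S_k\rvert\le L_n+\lvert S_n\rvert$.
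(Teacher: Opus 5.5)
Your Route A is correct for nonnegative $u_k$, and the recursion does close, so the hedge and the fallback are unnecessary. The un-halved $U$ enters only the inhomogeneous term. Write $U_1=\sum_{i<h}u_i\le U/2$ and $U_2=\sum_{i>h}u_i\le U/2$, and let $L^{(1)},L^{(2)}$ be the two-sided maxima on $[0,h-1]$ and $[h,n]$. The pointwise bound $L_n\le\max\bigl(L^{(1)}+\lvert S_n-S_{h-1}\rvert,\ L^{(2)}+\lvert S_h\rvert\bigr)$ then gives
\[
\mathbb P(L_n\ge\lambda)\le\frac{K\,(U_1^2+U_2^2)}{(1-\delta)^4\lambda^4}+\frac{2U^2}{\delta^4\lambda^4}\le\frac{U^2}{\lambda^4}\Bigl(\frac{K}{2(1-\delta)^4}+\frac{2}{\delta^4}\Bigr).
\]
This is at most $K\lambda^{-4}U^2$ for $\delta=1/10$ and $K=2\delta^{-4}\bigl(1-\frac{1}{2}(1-\delta)^{-4}\bigr)^{-1}$, with base case $L_1=0$.

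The paper gives no proof, only the citation. Billingsley's argument shares your first two steps: the geometric-mean bound producing the two-sided hypothesis, and $\max_{k\le n}\lvert S_k\rvert\le L_n+\lvert S_n\rvert$. It then invokes his two-sided maximal inequality, which assumes \emph{only} the two-sided bound and so cannot control the cross-split terms one-sidedly as you do.

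Since single-increment bounds are available here, your route is legitimate, and the detour through $L_n$ is in fact superfluous. The same bisection applied directly to $M_n=\max_{k\le n}\lvert S_k\rvert$ uses $M_n\le\max\bigl(\max_{k<h}\lvert S_k\rvert,\ \lvert S_h\rvert+\max_{h\le k\le n}\lvert S_k-S_h\rvert\bigr)$ and gives the recursion $K\ge\frac{K}{4}+\frac{K}{4(1-\delta)^4}+\delta^{-4}$. What Billingsley's route buys is the stronger two-sided theorem, which is needed for tightness in $D[0,1]$ but not here.

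Two minor points:
\begin{itemize}
\item The single-$k$ product bound in your second paragraph plays no role in Route A.
\item For $k<h$, the quantity $\lvert S_{h-1}-S_k\rvert$ is part of the two-sided minimum on $[0,h-1]$, not an error term; only $\lvert S_n-S_{h-1}\rvert$ is. A $k$-dependent error term would need its own maximal bound, which would be circular.
\end{itemize}

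The one step that is actually false is your reduction to $u_k\ge 0$. For $j=i+1$ the hypothesis reads $\mathbb P(\lvert X_{i+1}\rvert\ge\lambda)\le\lambda^{-4}u_{i+1}^2$, which forces nothing to vanish when $u_{i+1}<0$. This cannot be repaired, because the lemma is false for signed $u_k$. Take $n=2$, $u_1=1$, $u_2=-1$, $X_1=1$, $X_2=-1$. Every hypothesis holds (for $(i,j)=(0,2)$ because $S_2=0$), yet the right-hand side of the conclusion is $0$ while $\max_k\lvert S_k\rvert=1$.

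You should simply assume $u_k\ge 0$, as Billingsley does. Both your geometric-mean step and your choice of $h$ use it, and it holds in the paper's application, where $u_n=\tau_3(n)$ on $\mathcal A$ and $0$ off it.

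Likewise, your use of $(i,j)=(0,n)$ with $S_0=0$ is the right reading. The printed range $1\le i<j\le n$ never constrains $X_1$, and without $i=0$ the lemma fails.
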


To prove the almost sure lower bounds, we will actually show that our sums attain large values with large probability rather frequently. The next lemma establishes that this is sufficient.  

\begin{lemma}\label{lemma:asLbGeneral}
Let $\mathcal A_N$ be a collection of subsets of positive integers, and let $X_n$ for $n\in\mathcal A_N$ be random variables. Let $\mathcal L(X)$ and $\mathcal E(X)\to_{X\to \infty} 0$ be functions such that $\mathcal L(x)\asymp \mathcal L(X)$ for all $X<x\leq X^2$, and 
\begin{equation*}
    \max_{X<N\leq X^2}\left\lvert\sum_{n\in\mathcal A_N} X_n \right\rvert\geq \mathcal L(X)
\end{equation*} for every large enough $X$ with probability $1-\mathcal E(X)$. Then there exists a sequence $N_l\to\infty$ such that almost surely
\begin{equation}\label{eq:LargeL}
    \left\lvert \sum_{n\in\mathcal A_{N_l}} X_n \right\rvert \gg \mathcal L(N_l).
\end{equation}
\end{lemma}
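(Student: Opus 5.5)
We argue via a Borel--Cantelli argument along a rapidly growing sequence of scales. Set $X_j = 2^{2^j}$, so that $X_{j+1}=X_j^2$ and the windows $(X_j, X_j^2]=(X_j,X_{j+1}]$ are consecutive and cover all large integers. For each $j$ let $B_j$ be the event
\begin{equation*}
    B_j=\left\{\max_{X_j<N\leq X_j^2}\left\lvert\sum_{n\in\mathcal A_N}X_n\right\rvert\geq \mathcal L(X_j)\right\}.
\end{equation*}
By hypothesis $\mathbb P(B_j^c)\leq \mathcal E(X_j)$ for all large $j$. Since $\mathcal E(X)\to0$, we pass to a subsequence $j_1<j_2<\cdots$ such that $\sum_i \mathcal E(X_{j_i})<\infty$, for instance by choosing $j_i$ with $\mathcal E(X_{j_i})\leq 2^{-i}$. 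The first Borel--Cantelli lemma then shows that almost surely only finitely many of the events $B_{j_i}^c$ occur. Hence, almost surely, for all sufficiently large $i$ there is some $N=N(i)\in(X_{j_i},X_{j_i}^2]$ with $\lvert\sum_{n\in\mathcal A_N}X_n\rvert\geq\mathcal L(X_{j_i})$. This step needs no independence, because only the convergent half of Borel--Cantelli is used.

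Next we transfer the lower bound from $\mathcal L(X_{j_i})$ to $\mathcal L(N)$. For $N\in(X_{j_i},X_{j_i}^2]$ the assumption $\mathcal L(x)\asymp\mathcal L(X)$ for $X<x\leq X^2$ gives $\mathcal L(X_{j_i})\gg\mathcal L(N)$, with an implied constant independent of $i$. Therefore $\lvert\sum_{n\in\mathcal A_{N(i)}}X_n\rvert\gg\mathcal L(N(i))$. Since $N(i)>X_{j_i}\to\infty$, these values give the required sequence $N_l\to\infty$ satisfying \eqref{eq:LargeL}. If needed, we pass to a further strictly increasing subsequence, which is possible because the windows are disjoint for distinct $j_i$.

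One point deserves care. The sequence $N_l$ is random, depending on $\omega$, while the statement's phrase ``there exists a sequence $N_l\to\infty$ such that almost surely'' might be read with the quantifiers reversed. In the intended reading, which is also the one used in Theorems \ref{thm:asboundsPoly} and \ref{thm:asBoundsShort}, the sequence may depend on the realization, and the argument above proves exactly this. The only other mild point is the choice of a summable subsequence when the decay rate of $\mathcal E$ is unknown, and this is handled by the sparse choice of $j_i$ described above. We expect no step to be a real obstacle; the lemma is essentially a packaging of Borel--Cantelli.
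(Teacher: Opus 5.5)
Your proposal is correct and follows essentially the same route as the paper. The paper also picks doubly-exponentially spaced windows $(X,X^2]$, passes to a subsequence along which $\sum\mathcal E$ converges, applies the convergent half of Borel--Cantelli, and uses $\mathcal L(N)\asymp\mathcal L(X)$ on each window. Your remark that the resulting sequence $N_l$ depends on the realization is accurate, and it applies equally to the paper's argument.
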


\begin{proof}
   Begin with a sparse sequence $N_l$ satisfying $N_{l+1}>N_l^2$, say. For a fixed $l$, the probability that \eqref{eq:LargeL} fails for all $N\in (X_l, X_l^2]$ and a suitably small implied constant is at most $\mathcal E(X_l)$ by the condition of the proposition. Now we may choose a subsequence $N_{l_k}$ of $N_l$ so sparse depending on $\mathcal E$ such that $\sum_{k=1}^\infty \mathcal E\left(N_{l_k}\right)$ converges. Then by Borel-Cantelli, \eqref{eq:LargeL} fails for at most finitely many $l_k$. 
\end{proof}

Finally, we record a quantitative version of Slepian's lemma. 

\begin{lemma}[\cite{Harper2023_ASLF}, Normal Comparison Result 1]\label{lemma:normalsBigLemma} Suppose that $k \geq 2$, and that $\epsilon \geq 0$ is sufficiently small (i.e., less than a certain small absolute constant). Let $Y_1, Y_2, \dots, Y_k$ be
mean zero, variance one, jointly normal random variables, and suppose $\mathbb EY_{l_1}Y_{l_2}\leq \epsilon$ whenever $l_1\neq l_2$. Then, for any $100\epsilon \leq \delta \leq 1/100$, we have
\[
\mathbb{P}\left(\max_{1\leq l\leq k} Y_l\leq \sqrt{(2-\delta)\log k}\right) \leq  \exp\left(-\Theta\left(\frac{k^{\delta/20}}{\sqrt{\log k}}\right)\right) + k^{-\delta^2/(50\epsilon)}.
\]
\end{lemma}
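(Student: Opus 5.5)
The plan is to reduce to a one-factor Gaussian model using Slepian's lemma, and then to split that model into a shared component and an independent component.

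\emph{Step 1 (Slepian reduction).} Slepian's inequality says that if two centred Gaussian vectors have the same variances and the covariances of the first are entrywise at most those of the second, then $\mathbb P(\max_l Y_l\le u)$ is at most the corresponding probability for the second vector. Here every off-diagonal covariance of $(Y_l)$ is $\le\epsilon$, so it suffices to bound $\mathbb P(\max_l Y'_l\le u)$, where
\begin{equation*}
Y'_l=\sqrt{\epsilon}\,Z+\sqrt{1-\epsilon}\,W_l \qquad\text{and}\qquad u=\sqrt{(2-\delta)\log k}.
\end{equation*}
Here $Z,W_1,\dots,W_k$ are independent standard normals. Each $Y'_l$ has variance one and every off-diagonal covariance equals exactly $\epsilon$, so the comparison applies.

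\emph{Step 2 (splitting on the common factor).} Take $t=\frac{\delta}{10}\sqrt{\log k/\epsilon}$ and split according to whether $Z\ge -t$. On that event, $\max_l Y'_l\le u$ forces $\max_l W_l\le u'$ with $u'=(u+\sqrt\epsilon\, t)/\sqrt{1-\epsilon}$. Hence
\begin{equation*}
\mathbb P\Big(\max_l Y'_l\le u\Big)\le \mathbb P(Z<-t)+\mathbb P\Big(\max_l W_l\le u'\Big).
\end{equation*}
The Gaussian tail gives $\mathbb P(Z<-t)\le e^{-t^2/2}=k^{-\delta^2/(200\epsilon)}$. This is a power of the same shape as $k^{-\delta^2/(50\epsilon)}$, and the constant $50$ is recovered by adjusting the constant in $t$ against the slack in Step 3.

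\emph{Step 3 (independent maximum).} Since $\sqrt\epsilon\, t=\frac{\delta}{10}\sqrt{\log k}$ and $\epsilon\le\delta/100$, a short computation gives $u'^2\le(2-\delta/2)\log k$ for $\delta\le 1/100$. By independence and the Mills-ratio lower bound $\bar\Phi(x)\gg e^{-x^2/2}/x$,
\begin{equation*}
\mathbb P\Big(\max_l W_l\le u'\Big)=(1-\bar\Phi(u'))^k\le \exp\big(-k\bar\Phi(u')\big)\le \exp\Big(-\Theta\Big(\frac{k^{\delta/4}}{\sqrt{\log k}}\Big)\Big),
\end{equation*}
which is stronger than the claimed $k^{\delta/20}$.

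\emph{Main obstacle.} The only delicate point is the bookkeeping of constants. The shift $\sqrt\epsilon\, t$ must be large enough that $\mathbb P(Z<-t)$ yields the $k^{-\delta^2/(C\epsilon)}$ term, yet small enough, using $100\epsilon\le\delta$, that $u'$ stays below $\sqrt{(2-c\delta)\log k}$. The generous exponent $\delta/20$ in the statement leaves room to trade the constant in $t$ against this. The case $\epsilon=0$ is consistent with the statement: there $Z$ is irrelevant and the second term is zero.
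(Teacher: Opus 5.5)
Your argument is correct and is the standard proof of Harper's Normal Comparison Result 1, which this paper cites without proof: Slepian's lemma to pass to equal correlations $\epsilon$, the representation $\sqrt{\epsilon}\,Z+\sqrt{1-\epsilon}\,W_l$, and a split on the common factor $Z$. The only tidying needed is the constant: take $t=\frac{\delta}{5}\sqrt{\log k/\epsilon}$ from the outset. This gives exactly $\mathbb P(Z<-t)\le k^{-\delta^2/(50\epsilon)}$, and a direct check using $\epsilon\le\delta/100\le 10^{-4}$ still yields $u'^2\le(2-\delta/10)\log k$, hence $k\bar\Phi(u')\gg k^{\delta/20}/\sqrt{\log k}$, which is precisely where the constants $50$ and $20$ in the statement come from.
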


\section{General Framework}\label{section:generalSetup}
We are going to deduce our results from the following general framework. Throughout the rest of this paper, let $f$ be a Rademacher (or Steinhaus) random multiplicative function. We are interested in subsets $\mathcal A_N\subseteq\{1, 2, \ldots, N\}$ of square-free (or all) positive integers, sampled at various $N_1, N_2, \ldots, N_k$, with (some of) the following properties for any $1\leq l_1, \cdots , l_4\leq k$:
\begin{itemize}
    \item[(I)] There exists $\varepsilon_1>0$ such that the number of non-trivial\footnote{We call a solution \emph{non-trivial} if the $n_i$ are not equal in pairs.} solutions to $$n_1n_2n_3n_4=\square \text{ }(\text{or }n_1n_4=n_2n_3)$$ 
    with $n_i\in\mathcal A_{N_{l_i}}$ and $P^+(n_1)=P^+(n_2), P^+(n_3)=P^+(n_4)$ is $\leq \varepsilon_1\sqrt{\lvert\mathcal A_{N_{l_1}}\rvert\cdots \lvert\mathcal A_{N_{l_4}}\rvert}$.
     \item[(I')] There exists $\varepsilon_1'>0$ such that the number of non-trivial solutions to $n_1n_2n_3n_4=\square$ 
     (or $n_1n_4=n_2n_3$) 
    with $n_i\in\mathcal A_{N_{l_i}}$ and $P^+(n_1) = \cdots = P^+(n_4)$ is $\leq \varepsilon_1'\sqrt{\lvert\mathcal A_{N_{l_1}}\rvert\cdots \lvert\mathcal A_{N_{l_4}}\rvert}$. 
    
    (Note that $\varepsilon'_1$ may be taken to be at most $\varepsilon_1$ if (I) is satisfied.)

    \item[(II)] There exists $\varepsilon_2>0$ such that the number of $n_1\in\mathcal A_{N_{l_1}}$ and $n_2\in\mathcal A_{N_{l_2}}$ such that $P^+(n)=P^+m$ is $\leq \varepsilon_2\lvert\mathcal A_{N_{l_1}}\rvert\lvert\mathcal A_{N_{l_2}}\rvert$.

    \item[(III)] We either have: 
    \begin{itemize}
        \item[(III$_a$)] If $N_{l_1}\leq N_{l_2}$ then $\mathcal A_{N_{l_1}}\subseteq \mathcal A_{N_{l_2}}$, or
        \item[(III$_b$)] If $l_1\neq l_2$, then $\mathcal A_{N_{l_1}}\cap\mathcal A_{N_{l_2}} =\emptyset$. 
    \end{itemize} 

\end{itemize}

\begin{remark}
    Condition (III) is automatic if we begin with an infinite subset of square-free (or all) positive integers $\mathcal A$, and let $\mathcal A_N = \mathcal A \cap(N-H, N]$ for some $H=H(N)\to\infty$, as long as the scales $N_1, N_2, \ldots, N_k$ are sufficiently separated. 
\end{remark}
Let
\begin{equation}\label{eq:S_Ndef}
    S_N = \frac{1}{\sqrt{\lvert \mathcal A_N\rvert}}\sum_{n\in\mathcal A_N} \mathfrak Tf(n) = \sum_{p} \frac{1}{\sqrt{\lvert \mathcal A_N\rvert}} \sum_{\substack{n\in\mathcal A_N \\ P^+(n) = p}} \mathfrak Tf(n) =: \sum_{p} M_{p, N},
\end{equation}
where here and throughout the paper $\mathfrak T$ denotes simply the identity in the Rademacher case, and $\sqrt2$ times either the real or imaginary part in the Steinhaus case. 

\subsection{Approximate joint Gaussianity}
For any sequence of integers $N_1< N_2< \cdots< N_k$ and real coefficients $c_1, c_2, \ldots, c_k$ with $\sum_{l=1}^k \lvert c_l\rvert^2 = 1$, the sum 
\begin{equation*}
    c_1S_{N_1} + c_2S_{N_2} + \cdots + c_kS_{N_k} = \sum_p \sum_{l=1}^k c_lM_{p, N_l}
\end{equation*} is a martingale, with $\sum_{l=1}^k c_lM_{p, N_l}$ being a martingale difference sequence indexed by primes $p$.

Applying Proposition \ref{prop:quantCLT} (or Proposition \ref{prop:quantCLT-complex}) to $\sum_{l=1}^k c_lM_{p, N_l}$ leads us to bounding the quantities 
\begin{equation*}
    A := \sum_{p} \mathbb E\left\lvert\sum_{l=1}^k c_lM_{p, N_l}\right\rvert^4 \hspace{1cm}\text{and}\hspace{1cm} B:= \mathbb E\left\lvert\sum_{p} \left\lvert \sum_{l=1}^k c_lM_{p, N_l}\right\rvert^2 - 1\right\rvert^2.
\end{equation*}
In the Steinhaus case, for our particular martingale, the last term from Proposition \ref{prop:quantCLT-complex} is $\ll A$. This is because upon expanding
\begin{equation*}
    \mathbb E \left\lvert\sum_p \left(\left(\sum_{l=1}^k c_lM_{p, N_l}\right)^2 + \left(\sum_{l=1}^k \overline{c_lM_{p, N_l}}\right)^2\right) \right\rvert^2,
\end{equation*}
for all primes $p\neq q$ we have $$\mathbb EM_{p, N_{l_1}}^2M_{q, N_{l_2}}^2=\mathbb E\overline{M_{p, N_{l_1}}^2}M_{q, N_{l_2}}^2 = \mathbb EM_{p, N_{l_1}}^2\overline{M_{q, N_{l_2}}^2} = \mathbb E\overline{M_{p, N_{l_1}}^2M_{q, N_{l_2}}^2}=0,$$ whereas for the primes $p=q$ the surviving terms are (note that $\mathbb E\left(\sum_{l=1}^k c_l M_{p, N_l}\right)^4 =$ \linebreak $\mathbb E\overline{\left(\sum_{l=1}^k c_l M_{p, N_l}\right)}^4 =  0$ since $\mathbb Ef(p)^4 = 0$)
\begin{equation*}
    \ll \sum_{p} \mathbb E\left(\sum_{l=1}^k c_lM_{p, N_l}\right)^2\overline{\left(\sum_{l=1}^k c_l M_{p, N_l}\right)^2} = A.
\end{equation*}

\subsubsection{Bounding $A$} We have
\begin{equation*}
    A = \sum_p\sum_{1\leq l_i\leq k} c_{l_1}\cdots \overline{c}_{l_4}\mathbb E M_{p, N_{l_1}}\cdots \overline{M}_{p, N_{l_4}}. 
\end{equation*}
Further, 
\begin{equation*}
    \mathbb E M_{p, N_{l_1}}\cdots \overline{M}_{p, N_{l_4}} = \frac{1}{\sqrt{\lvert \mathcal A_{N_{l_1}}\rvert\cdots \lvert \mathcal A_{N_{l_4}}\rvert}}\sum_{\substack{n_i\in\mathcal A_{N_{l_i}} \\ P^+(n_i) = p}} \mathbb Ef(n_1)\cdots \overline{f(n_4)}.
\end{equation*}

Note that $\mathbb Ef(n_1)\cdots \overline{f(n_4)}$ is non-zero only if $n_1n_2n_3n_4$ is a perfect square (or $n_1n_2=n_3n_4$), in which case the expectation is equal to $1$. Therefore, by moving the sum over $p$ inside, we have 
\begin{equation*}
    A = \sum_{1\leq l_i\leq k} \frac{c_{l_1}}{\sqrt{\lvert\mathcal A_{N_{l_1}}\rvert}}\cdots \frac{\overline{c}_{l_4}}{\sqrt{\lvert\mathcal A_{N_{l_4}}}\rvert}\sum_{\substack{n_i\in\mathcal A_{N_{l_i}} \\ n_1n_2n_3n_4 = \square \text{ }(\text{or } n_1n_2=n_3n_4) \\ P^+ (n_i) = P^+ (n_j) \text{ }\forall i, j }}1.
\end{equation*}
The contribution from the trivial solutions (note that by condition (III), $n_1=n_2$ forces one of $\mathcal A_{N_{l_1}}, \mathcal A_{N_{l_2}}$ to be a subset of the other, and similarly for the other variables) is 
\begin{equation*}
\ll  \sum_{\substack{1\leq l_i\leq k }} \frac{\lvert c_{l_1}\rvert\cdots \lvert c_{l_4}\rvert}{\sqrt{\lvert \mathcal A_{N_{l_1}}\rvert\cdots \lvert\mathcal A_{N_{l_4}}\rvert}}\sum_{\substack{n_1\in\mathcal A_{N_{l_1}}\cap \mathcal A_{N_{l_2}} \\ n_3\in\mathcal A_{N_{l_3}}\cap \mathcal A_{N_{l_4}} \\ P^+(n_1) = P^+(n_3)}} 1\ll \varepsilon_2k^2, 
\end{equation*}
by condition (II). Here we used that if, say $\mathcal A_{N_{l_1}}\cap \mathcal A_{N_{l_2}} = \mathcal A_{N_{l_1}}$, then certainly $\lvert\mathcal A_{N_{l_2}}\rvert\geq\lvert\mathcal A_{N_{l_1}}\rvert$, and then applied the Cauchy-Schwarz inequality for the sum over the $l_i$. Note that if (III$_b$) is satisfied, then $n_1=n_2$ forces $l_1=l_2$ (and similarly for the other variables), in which case the contribution from the trivial solutions would be $\ll\varepsilon_2 \sum_{1\leq l_1, l_3\leq k} \lvert c_{l_1}\rvert^2\lvert c_{l_3}\rvert^2\ll \varepsilon_2.$

The contribution from non-trivial solutions is
\begin{equation*}
    \ll\varepsilon_1' \sum_{1\leq l_i\leq k} \lvert c_{l_1}\rvert\cdots \lvert c_{l_4}\rvert \ll \varepsilon_1'k^2 
\end{equation*}
using $\sum_{l=1}^k \lvert c_{l}\rvert\ll\sqrt k$ by Cauchy-Schwarz and condition (I').

We conclude that
\begin{equation}\label{eq:Abound}
    A \ll k^2(\varepsilon_1'  + \varepsilon_2),
\end{equation}
or 
\begin{equation}\label{eq:AboundDisjoint}
    A\ll k^2\varepsilon_1' + \varepsilon_2
\end{equation}
if (III$_b$) is satisfied.

\subsubsection{Bounding $B$} We have 
\begin{multline*}
    B = 1+ \sum_{p, q} \sum_{1\leq l_i\leq k} \frac{c_{l_1}}{\sqrt{\lvert \mathcal A_{N_{l_1}}\rvert}}\cdots \frac{c_{l_4}}{\sqrt{\lvert \mathcal A_{N_{l_4}}\rvert}}\sum_{\substack{n_i\in\mathcal A_{N_{l_i}} \\ n_1n_2n_3n_4 = \square \text{ }(n_1n_4=n_2n_3) \\ P^+(n_1) = P^+(n_2)=p \\ P^+(n_3) = P^+(n_4) =q}} 1  \\ \\ - 2\sum_p\sum_{1\leq l_i\leq k} \frac{c_{l_1}}{\sqrt{\lvert \mathcal A_{N_{l_1}}\rvert}}\frac{\overline{c}_{l_2}}{\sqrt{\lvert \mathcal A_{N_{l_2}}\rvert}}\sum_{\substack{n_i\in\mathcal A_{N_{l_i}} \\ n_1n_2=\square\text{ }(\text{or }n_1=n_2) \\ P^+(n_1) = P^+(n_2) = p}} 1 \label{Beq}
\end{multline*}
Note that $n_1n_2=\square$ if and only if $n_1=n_2$ in the Rademacher case, thus we have that the last sum is equal to \begin{align*}
    -2\sum_{1\leq l_i\leq k} \frac{c_{l_1}}{\sqrt{\lvert \mathcal A_{N_{l_1}}\rvert}}\frac{c_{l_2}}{\sqrt{\lvert \mathcal A_{N_{l_2}}\rvert}} \min\{\lvert \mathcal A_{N_{l_1}}\rvert, \lvert \mathcal A_{N_{l_2}}\rvert \} 
     & = -2  + O\left( \sum_{1\leq l_1 < l_2\leq k} \lvert c_{l_1}\overline{c}_{l_2}\rvert\sqrt{\frac{\lvert \mathcal A_{N_{l_1}}\rvert}{\lvert \mathcal A_{N_{l_2}}\rvert}}\right)  
      \\ & = -2 + O\left(k\max_{1\leq l_1<l_2\leq k} \sqrt{\frac{\lvert \mathcal A_{N_{l_1}}\rvert}{\lvert \mathcal A_{N_{l_2}}\rvert}}\right)
\end{align*} by using condition (III). (Again, the error $k\max_{1\leq l_1<l_2\leq k} \sqrt{\frac{\lvert \mathcal A_{N_{l_1}}\rvert}{\lvert \mathcal A_{N_{l_2}}\rvert}}$ is not present if (III$_b$) is satisfied.) Therefore 
\begin{multline*}
    B = -1+ \sum_{p, q} \sum_{1\leq l_i\leq k} \frac{c_{l_1}}{\sqrt{\lvert \mathcal A_{N_{l_1}}\rvert}}\cdots \frac{{c}_{l_4}}{\sqrt{\lvert \mathcal A_{N_{l_4}}\rvert}}\sum_{\substack{n_i\in\mathcal A_{N_{l_i}} \\ n_1n_2n_3n_4 = \square \text{ }(\text{or }n_1n_4=n_2n_3)\\ P^+(n_1) = P^+(n_2)=p \\ P^+(n_3) = P^+(n_4) =q}} 1 \\ +  O\left(k\max_{1\leq l_1<l_2\leq k} \sqrt{\frac{\lvert \mathcal A_{N_{l_1}}\rvert}{\lvert \mathcal A_{N_{l_2}}\rvert}}\right).
\end{multline*}
In the above sum, note that $l_1=l_2, l_3 = l_4$ with the corresponding trivial solutions (after moving the sum over $p, q$ inside) contributes $1$. Moreover, the contribution from non-trivial solutions using (I) is 
\begin{equation*}
\ll\varepsilon_1 \sum_{1\leq l_i\leq k} \lvert c_{l_1}\rvert\cdots \lvert {c}_{l_4}\rvert \ll \varepsilon_1 k^2.
\end{equation*} Hence 
\begin{equation*}
     B \ll \varepsilon_1k^2 + k\max_{1\leq l_1<l_2\leq k} \sqrt{\frac{\lvert \mathcal A_{N_{l_1}}\rvert}{\lvert \mathcal A_{N_{l_2}}\rvert}} +  \sum_{\substack{1\leq l_i\leq k \\ l_1\neq l_2 \text{ or } l_3\neq l_4}} \frac{\lvert c_{l_1}\rvert}{\sqrt{\lvert \mathcal A_{N_{l_1}}\rvert}}\cdots \frac{\lvert {c}_{l_4}\rvert}{\sqrt{\lvert \mathcal A_{N_{l_4}}\rvert}}\sum_{\substack{n_i\in\mathcal A_{N_{l_i}} \\ n_1=n_3, n_2 = n_4 \\ P^+(n_1) = P^+(n_2) }} 1.
\end{equation*}
As before, $n_1=n_3$ implies $n_1\in\mathcal A_{N_{l_1}}\cap \mathcal A_{N_{l_3}}$ and similarly for $n_2=n_4$, thus we employ condition (II) and Cauchy-Schwarz to bound the sum by $\varepsilon_2k^2$ (or just $\varepsilon_2$ if (III$_b$) is satisfied). We conclude that
\begin{equation}\label{eq:Bbound}
B \ll k^2(\varepsilon_1 + \varepsilon_2) + k \max_{1\leq l_1<l_2\leq k} \sqrt{\frac{\lvert \mathcal A_{N_{l_1}}\rvert}{\lvert \mathcal A_{N_{l_2}}\rvert}},
\end{equation}
or 
\begin{equation}\label{eq:BboundDisjoint}
B \ll  k^2\varepsilon_1  + \varepsilon_2
\end{equation}
if (III$_b$) is satisfied.

Combining \eqref{eq:Abound}, \eqref{eq:AboundDisjoint}, \eqref{eq:Bbound}, \eqref{eq:BboundDisjoint} and using Proposition \ref{prop:quantCLT},
we obtain the following approximate joint Gaussianity result. 

\begin{theorem}\label{thm:approxJointGaussian}
    Let $f$ be a Rademacher or Steinhaus multiplicative function, and suppose the $\mathcal A_{N_l}$ for $1\leq l\leq k$  satisfy conditions (I), (I'), (II) and (III), with the values $\varepsilon_1, \varepsilon_1', \varepsilon_2$, respectively. Then, for any real coefficients $c_1, c_2, \ldots, c_k$ with $\sum_{l=1}^k \lvert c_l\rvert^2 = 1$, we have
    \begin{multline*}
\left\lvert\mathbb P\left(\sum_{l=1}^k c_l S_{N_{l}}\leq x \right) - \frac{1}{\sqrt{2\pi}}\int_{-\infty}^x e^{-\frac{t^2}{2}}\emph{d}t\right\rvert \\ \ll \frac{1}{1+\lvert x\rvert^{\frac{16}{5}}}\left(   \varepsilon_2 + k^2(\varepsilon_1 + \varepsilon_1' ) + (1-1_{\text{(III$_b$)}})\left(k^2\varepsilon_2 + k \max_{1\leq l_1<l_2\leq k} \sqrt{\frac{\lvert \mathcal A_{N_{l_1}}\rvert}{\lvert \mathcal A_{N_{l_2}}\rvert}}\right) \right)^{\frac{1}{5}},
 \end{multline*}
 where $1_{\text{(III$_b$)}}$ denotes the indicator function of (III$_b$).
\end{theorem}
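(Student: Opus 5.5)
The plan is to apply Proposition \ref{prop:quantCLT} (or Proposition \ref{prop:quantCLT-complex} in the Steinhaus case) to the martingale
\[
\sum_{l=1}^k c_l S_{N_l}=\sum_p \sum_{l=1}^k c_l M_{p,N_l}.
\]
We order the primes increasingly and use the filtration generated by $\{f(q): q\le p\}$. Each $M_{p,N_l}$ equals $f(p)$ (or $f(p^\alpha)$, summed over $\alpha$) times a function of $f$ at smaller primes. Since $\mathbb E f(p)=0$, and likewise $\mathbb E f(p)^\alpha=0$ in the Steinhaus case, the summands $\sum_l c_lM_{p,N_l}$ form a square-integrable martingale difference sequence. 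The sequence is finite because every $\mathcal A_{N_l}\subseteq[1,N_k]$.

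The error term in Proposition \ref{prop:quantCLT} is then controlled by $A$ and $B$. In the Steinhaus case the extra term $\mathbb E|\sum_p(X_p^2+\overline{X_p^2})|^2$ is $\ll A$, by the orthogonality computation carried out just before the statement. So the error is $\ll (1+|x|^{16/5})^{-1}(A+B)^{1/5}$.

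Next we insert the bounds \eqref{eq:Abound} and \eqref{eq:Bbound} in general, or \eqref{eq:AboundDisjoint} and \eqref{eq:BboundDisjoint} when (III$_b$) holds. Adding them gives
\[
A+B\ll k^2(\varepsilon_1+\varepsilon_1')+\varepsilon_2+(1-1_{(\mathrm{III}_b)})\Bigl(k^2\varepsilon_2+k\max_{l_1<l_2}\sqrt{|\mathcal A_{N_{l_1}}|/|\mathcal A_{N_{l_2}}|}\Bigr).
\]
In the (III$_a$) case, $\varepsilon_2\le k^2\varepsilon_2$, so writing $\varepsilon_2$ separately changes nothing. This is exactly the expression in the theorem.

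The main points needing care are the following.

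\textbf{Normalisation of variance.} Proposition \ref{prop:quantCLT} compares with the standard Gaussian, and we do not separately renormalise the variance. This is legitimate because the deviation of $\sum_p|X_p|^2$ from $1$ is already absorbed in $B$. In the computation of $B$, the diagonal terms give $1-2+1$ up to the errors listed.

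\textbf{Trivial solutions in the Rademacher fourth moment.} The conditions restrict $n_i$ to square-free integers. So $n_1n_2n_3n_4=\square$ is trivial exactly when the $n_i$ pair up, and this is the case used in the trivial/non-trivial split.

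\textbf{The cross term in $B$.} With (III$_a$), $\mathcal A_{N_{l_1}}\cap\mathcal A_{N_{l_2}}$ is the smaller set. Its size divided by $\sqrt{|\mathcal A_{N_{l_1}}||\mathcal A_{N_{l_2}}|}$ equals the square-root ratio, and summing $|c_{l_1}c_{l_2}|$ over $l_1<l_2$ costs at most $k$ by Cauchy--Schwarz.

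I expect the main obstacle to be the bookkeeping in $B$. The subtle part is that the off-diagonal choices of $(l_1,\dots,l_4)$ with trivial pairings $n_1=n_3$, $n_2=n_4$ must be bounded through condition (II) rather than producing main terms. Those main terms cancel the constant only for the diagonal $l_1=l_2$, $l_3=l_4$. Since this has been carried out above, the proof reduces to citing these estimates and Proposition \ref{prop:quantCLT}.
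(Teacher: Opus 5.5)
Your proposal is correct and follows the paper's proof essentially verbatim: apply Proposition~\ref{prop:quantCLT} (or Proposition~\ref{prop:quantCLT-complex}, with the extra Steinhaus term absorbed into $A$) to the martingale $\sum_p\sum_l c_lM_{p,N_l}$, then insert \eqref{eq:Abound}--\eqref{eq:BboundDisjoint}, exactly as you describe. One small wording point: square-freeness does not make every solution of $n_1n_2n_3n_4=\square$ pair up (e.g.\ $6\cdot 15\cdot 35\cdot 14=210^2$); what it gives is $n_1n_2=\square\iff n_1=n_2$ in the cross term of $B$ and $\mathbb E f(n_1)\cdots f(n_4)=1_{n_1n_2n_3n_4=\square}$, while ``trivial'' is simply a definition, and the non-paired solutions are precisely what (I) and (I') control.
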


In certain cases, it will be necessary to have an analogous result with $B=0$, that is, an error independent of $\varepsilon_1$. For this, we introduce an additional assumption:
\begin{itemize}
    \item[(IV)] Let $P$ be a parameter and suppose that the sets $\mathcal A_{N_l}$ ($1\leq l\leq k$) consist only of integers divisible by some unique prime greater than $P$.
\end{itemize}
Under the assumption (IV), we condition on all the primes $\leq P$ and note that 
\begin{equation*}
    \frac{1}{V_c}\sum_{p>P}\sum_{l=1}^k c_l M_{p, N_l}
\end{equation*} is a martingale (in fact it is a weighted sum of the independent random variables $f(p)$ with $p>P$), where \begin{equation}\label{eq:V_c}
    V_c =\sqrt{\widetilde{\mathbb E}\left(\sum_{p>P}\sum_{l=1}^k c_l M_{p, N_l} \right)^2 } = \sqrt{\sum_{l_1\leq l_2}c_{l_1}c_{l_2}\widetilde{\mathbb E}S_{N_{l_1}}S_{N_{l_2}}}.
\end{equation} Here and throughout $\widetilde{\mathbb E}$ denotes conditional expectation being taken only over primes $>P$. 

\begin{theorem}\label{thm: complicatedapproxJointG}
    Let $f$ be a Rademacher (or Steinhaus) random multiplicative function, and suppose the $\mathcal A_{N_l}$ for $1\leq l\leq k$ satisfy conditions (I'), (II), (III), and (IV), with the values $\varepsilon_1', \varepsilon_2$, and $P$, respectively. With probability \begin{equation}\label{eq:bigProb}1 - O\left(k\sqrt{(\varepsilon_1' + \varepsilon_2)} +(1-1_{\text{(III$_b$)}}) k \max_{1\leq l_1<l_2\leq k} \sqrt{\frac{\lvert \mathcal A_{N_{l_1}}\rvert}{\lvert \mathcal A_{N_{l_2}}\rvert}}\left(k\sqrt{(\varepsilon_1' + \varepsilon_2)}\right)^{1/12}\right)\end{equation} over the $f(p)$ for primes $p\leq P$, we have 
    \begin{equation*}\left\lvert\widetilde{\mathbb P}\left(\sum_{p>P}\sum_{l=1}^k c_l M_{p, N_l}\leq x \right) - \frac{1}{\sqrt{2\pi V_c^2}}\int_{-\infty}^x e^{-\frac{t^2}{2V_c^2}}\emph{d}t\right \rvert  \ll \frac{\left(k\sqrt{\varepsilon_1' + \varepsilon_2}\right)^{\frac{1}{10}}}{1+\lvert x\rvert^{\frac{16}{5}}}, \end{equation*} where $\widetilde{\mathbb P}$ denotes conditional probability being taken only over $f(p)$ for primes $p>P$. 
\end{theorem}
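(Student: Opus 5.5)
Conditionally on the $f(p)$ with $p\le P$, the sum $\sum_{p>P}\sum_l c_l M_{p,N_l}$ is a weighted sum of independent $f(p)$, $p>P$; this uses (IV), since each $n$ has a unique prime factor $>P$, namely $P^+(n)$. I would normalise by $V_c$ and apply Proposition \ref{prop:quantCLT} (or Proposition \ref{prop:quantCLT-complex}) under $\widetilde{\mathbb P}$ to the differences $X_p=V_c^{-1}\sum_l c_l M_{p,N_l}$. For independent summands the conditional quadratic variation term should be essentially deterministic: $\sum_p \widetilde{\mathbb E}|X_p|^2=1$ by definition of $V_c$. Here $|X_p|^2$ is $f(p)$-free in the Rademacher case, because $f(p)^2=1$ and $p\parallel n$. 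So $\sum_p X_p^2-1=0$ exactly and $B=0$. In the Steinhaus case the extra term $\widetilde{\mathbb E}|\sum_p (X_p^2+\overline{X_p^2})|^2$ reduces, as in the argument above, to $\ll$ the fourth moment term. Hence the conditional Kolmogorov distance is $\ll (\widetilde A/V_c^4)^{1/5}/(1+|x|^{16/5})$, where $\widetilde A=\sum_{p>P}\widetilde{\mathbb E}|\sum_l c_lM_{p,N_l}|^4$. The unnormalised statement then follows by rescaling $x$, using $V_c\asymp1$ on the good event.

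The remaining work is to show that, with the probability in \eqref{eq:bigProb} over small primes, $\widetilde A\le (k\sqrt{\varepsilon_1'+\varepsilon_2})^{1/2}$ and $V_c^2\ge 1/2$. This gives the exponent $\frac{1}{10}=\frac12\cdot\frac15$.

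For $\widetilde A$, I would take full expectation: $\mathbb E\widetilde A=A\ll k^2(\varepsilon_1'+\varepsilon_2)$, by the computation leading to \eqref{eq:Abound} (condition (I) is not needed there). Markov's inequality then gives $\widetilde A\le (k\sqrt{\varepsilon_1'+\varepsilon_2})^{1/2}\cdot k\sqrt{\varepsilon_1'+\varepsilon_2}$ except with probability $O(k\sqrt{\varepsilon_1'+\varepsilon_2})$, which is more than enough.

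For $V_c^2$, I would write $V_c^2=\sum_{l_1,l_2}c_{l_1}c_{l_2}\widetilde{\mathbb E}S_{N_{l_1}}S_{N_{l_2}}$ and compare it with $1$. Its mean is $\sum_{l_1,l_2}c_{l_1}c_{l_2}\mathbb ES_{N_{l_1}}S_{N_{l_2}}$. The diagonal contributes $1$. The off-diagonal contributes $0$ under (III$_b$), and $O(k\max\sqrt{|\mathcal A_{N_{l_1}}|/|\mathcal A_{N_{l_2}}|})$ under (III$_a$), by the same computation as the cross term in $B$. For the variance, $\mathbb E(V_c^2-\mathbb EV_c^2)^2$ expands into fourth-moment counts $n_1n_2n_3n_4=\square$ with $P^+(n_1)=P^+(n_2)$ and $P^+(n_3)=P^+(n_4)$. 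Because the large primes have already been integrated out, this is exactly the $B$-type count, but it is not restricted by (I). This is the main obstacle, since we are not assuming (I). The plan is to exploit that $\widetilde{\mathbb E}$ pairs the unique large prime factors. Writing $n_i=p_i m_i$ with $p_i>P$ and $m_i$ $P$-smooth, conditional pairing forces $p_1=p_2$ and $p_3=p_4$. The fluctuation of $V_c^2$ then only sees correlations of $f(m_1m_2)f(m_3m_4)$, which are nonzero only when $m_1m_2m_3m_4=\square$.

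I would expect to show that the non-trivial such configurations have large primes coinciding across all four variables, or else are controlled by (II). With this, the variance should be bounded by $k^2(\varepsilon_1'+\varepsilon_2)$ via (I'), (II) and Cauchy--Schwarz, as in the bound for $A$. Chebyshev's inequality then gives $|V_c^2-\mathbb EV_c^2|\le 1/4$ outside probability $O(k\sqrt{\varepsilon_1'+\varepsilon_2})$ — taking the variance to be $\ll(k\sqrt{\varepsilon_1'+\varepsilon_2})^2$ — and with that the fourth-moment step is secured.

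The (III$_a$) term in \eqref{eq:bigProb} carries the odd factor $(k\sqrt{\cdot})^{1/12}$. I expect it to arise from balancing: rather than requiring the mean shift to be tiny, one keeps $V_c$ bounded below and tunes the Markov threshold, trading a power of $\varepsilon$ against the mean error. I would settle the exact exponents at the end by optimising the thresholds.
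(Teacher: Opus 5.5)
Your reduction matches the paper's: normalise by $V_c$, apply Proposition \ref{prop:quantCLT} with the quadratic-variation term vanishing (or, for Steinhaus, dominated by the fourth-moment term), and bound $\widetilde A$ by Markov using $\mathbb E\widetilde A=A\ll k^2(\varepsilon_1'+\varepsilon_2)$. The gap is the step you rely on for $V_c^2\geq 1/2$. Write $Z_p=\sum_l c_lM_{p,N_l}$. In the Rademacher case, (IV) gives $Z_p=f(p)U_p$ with $U_p$ depending only on the $f(q)$, $q\le P$. Hence $V_c^2=\sum_{p>P}Z_p^2$ identically, and $\mathbb E(V_c^2-1)^2$ is exactly the unconditional quantity $B$. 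That is the quantity hypothesis (I) exists to control, and the theorem deliberately does not assume (I). Integrating out the large primes gains nothing: with $p_1=p_2$ and $p_3=p_4$, the condition $m_1m_2m_3m_4=\square$ is the same as $n_1n_2n_3n_4=\square$.

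Your expectation that the non-trivial configurations either have all four large primes equal or are absorbed by (II) is false. Take $n_1=g_1a$, $n_2=g_1b$, $n_3=g_2a$, $n_4=g_2b$ with $a\neq b$ and $P^+(g_1)\neq P^+(g_2)$, both $>P$. These solve both $n_1n_2n_3n_4=\square$ and $n_1n_4=n_2n_3$, have $P^+(n_1)=P^+(n_2)\neq P^+(n_3)=P^+(n_4)$, and are not equal in pairs. So (I') does not see them, and (II), which only controls paired solutions, does not bound them. In the short-interval application, the subfamily $a=l_1$, $b=l_2$ singled out in the proof of Proposition \ref{prop:CountAt4Scales} alone has of order $H_{l_1}H_{l_2}/(l_1l_2)\asymp H_{l_1}H_{l_2}/h(X)^2$ members. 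That is a normalised contribution of order $h(X)^{-2}=(X/H(X))^{-2\delta}$, far larger than $k^2(\varepsilon_1'+\varepsilon_2)$ with $\varepsilon_1'=X^{-2/15}\log X$ and $\varepsilon_2=(Xh(X))^{-2/3}$. So the variance of $V_c^2$ is not small in terms of the theorem's parameters, and Chebyshev cannot give the probability in \eqref{eq:bigProb}. (A smaller slip: with $E=k\sqrt{\varepsilon_1'+\varepsilon_2}$ and $\mathbb E\widetilde A\ll E^2$, the Markov threshold $E^{3/2}$ only gives failure probability $O(E^{1/2})$. Take the threshold $E$, as the paper does.)

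The paper never tries to concentrate $V_c^2$. The change of variables $V_cx\mapsto x$ turns $(V_c^{-4}\widetilde A)^{1/5}/(1+|x/V_c|^{16/5})$ into $(V_c^{12}\widetilde A)^{1/5}/(V_c^{16/5}+|x|^{16/5})$, and from then on only upper bounds are used. First-moment Markov gives $\widetilde A\le E$. From $\mathbb EV_c^2\ll 1+(1-1_{\text{(III$_b$)}})k\max_{l_1<l_2}\sqrt{|\mathcal A_{N_{l_1}}|/|\mathcal A_{N_{l_2}}|}$ it also gives $V_c^2\ll E^{-1/12}$. Then $V_c^{12}\widetilde A\ll E^{1/2}$, which yields the exponent $\frac1{10}$. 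This, not a balancing against the mean shift, is where the $\frac1{12}$ in \eqref{eq:bigProb} comes from.

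Be aware, though, that replacing $V_c^{16/5}+|x|^{16/5}$ by $1+|x|^{16/5}$ tacitly requires $V_c\gg 1$. So your instinct that a lower bound on $V_c$ is needed is sound, and the paper's written argument passes over it. For the reason above, such a lower bound is not delivered by (I'), (II), (III), (IV) through a variance bound. It has to come from additional arithmetic input: hypothesis (I), or, in the short-interval application, the second-moment control of the conditional covariances furnished by Proposition \ref{prop:CountAt4Scales}.
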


\begin{proof}
    We begin by applying Proposition \ref{prop:quantCLT} to the martingale 
    \begin{equation*}
         \frac{1}{V_c}\sum_{p>P}\sum_{l=1}^k c_l M_{p, N_l}.
    \end{equation*} Noting that  
    \begin{equation*}
        \frac{1}{V_c^2}\widetilde{\mathbb E}\sum_{p>P}\left( \sum_{l=1}^k c_l M_{p, N_l}\right)^2-1=0
    \end{equation*} by the definition of $V_c$, after applying the change of variables $V_cx\mapsto x$, we have
    \begin{multline*}
        \left\lvert\widetilde{\mathbb P}\left(\sum_{p>P}\sum_{l=1}^k c_l M_{p, N_l}\leq x \right) - \frac{1}{\sqrt{2\pi V_c^2}}\int_{-\infty}^x e^{-\frac{t^2}{2V_c^2}}\emph{d}t\right \rvert \\ \ll \frac{\left(V_c^{12}\cdot \widetilde{\mathbb E}\sum_{p>P}\left\lvert \sum_{l=1}^k c_l M_{p, N_l}\right\rvert^4\right)^{\frac{1}{5}}}{1+\lvert x\rvert^{\frac{16}{5}}}.
    \end{multline*} 
    Now, by exactly the same computations leading to Theorem \ref{thm:approxJointGaussian}, we have 
    \begin{equation*}
        \mathbb E\widetilde{\mathbb E}\sum_{p>P}\left\lvert \sum_{l=1}^k c_l M_{p, N_l}\right\rvert^4\ll k^2(\varepsilon_1' + \varepsilon_2),
    \end{equation*} and 
    \begin{equation*}
        \mathbb EV_c^2 \ll 1 + (1-1_{\text{(III$_b$)}})k \max_{1\leq l_1<l_2\leq k} \sqrt{\frac{\lvert \mathcal A_{N_{l_1}}\rvert}{\lvert \mathcal A_{N_{l_2}}\rvert}}.
    \end{equation*}
        It follows by Markov's inequality with the first moment that with probability as in \eqref{eq:bigProb}, both 
        \begin{equation*}
            \widetilde{\mathbb E}\sum_{p>P}\left\lvert \sum_{l=1}^k c_l M_{p, N_l}\right\rvert^4 \leq k\sqrt{\varepsilon_1'+\varepsilon_2}\end{equation*} and \begin{equation*}
            V_c^2\ll \left(k\sqrt{\varepsilon_1' + \varepsilon_2}\right)^{-1/12}
        \end{equation*} hold, thus finishing the proof. 
\end{proof}

The advantage of introducing condition (IV) and Theorem \ref{thm: complicatedapproxJointG} is that the parameters $\varepsilon_1'$ and $\varepsilon_2$ can be taken to be much smaller than before (depending on the size of $P$). This leads to a very precise approximation of the conditional probabilities over primes bigger than $P$, at least over most realizations of the $f(p)$ for $p\leq P$. 

\subsection{Multivariate Central Limit Theorem} Finally, we use Proposition \ref{prop:multivarCLT} to bootstrap Theorems \ref{thm:approxJointGaussian} and \ref{thm: complicatedapproxJointG} to quantitative multivariate central limit theorems.     

\begin{theorem}\label{thm:MCLT} Let $f$ be a Rademacher (or Steinhaus) random multiplicative function, and suppose the $\mathcal A_{N_l}$ for $1\leq l\leq k$ satisfy conditions (I), (I'), (II) and (III), with the values $\varepsilon_1, \varepsilon_1', \varepsilon_2$, respectively. Further, let $X=(S_{N_1}, S_{N_2}, \ldots, S_{N_k})$ (recall \eqref{eq:S_Ndef} for the definition of $S_N$) and let $Y$ be a standard multivariate Gaussian random vector with independent coordinates. Then 
\begin{equation}
 \sup_{\|u\|_{\emph{Lip}}\leq 1} \lvert \mathbb EuX - \mathbb EuY\rvert\ll k^{\frac{1}{2}}  \left(k^2\varepsilon_2 + k \max_{1\leq l_1<l_2\leq k} \sqrt{\frac{\lvert \mathcal A_{N_{l_1}}\rvert}{\lvert \mathcal A_{N_{l_2}}\rvert}} \right)^{\frac1{5(k+1)}}.
 \end{equation}
If the $\mathcal A_{N_l}$ satisfy (I'), (II), (III), and (IV), then with probability as in \eqref{eq:bigProb} over the $f(p)$ for primes $p\leq P$, we have 
 \begin{equation}
 \sup_{\|u\|_{\emph{Lip}}\leq 1} \lvert \widetilde{\mathbb E}uX - \mathbb EuY'\rvert \\ \ll k^{\frac{1}{2}} \left(k\sqrt{\varepsilon_1' + \varepsilon_2}\right)^{\frac1{10(k+1)}},
 \end{equation}
 where $Y'$ is a multivariate Gaussian random vector with the covariance structure $\mathbb EY'_iY'_j = \widetilde{\mathbb E}S_{N_{l_i}} S_{N_{l_j}}.$
\end{theorem}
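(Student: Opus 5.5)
The plan is to apply Proposition \ref{prop:multivarCLT} with $X=(S_{N_1},\ldots,S_{N_k})$ and $Y$ the standard Gaussian vector (respectively $Y'$ in the conditional setting). For every unit vector $\theta\in\mathbb R^k$, the linear combination $\sum_l\theta_lY_l$ is exactly $N(0,1)$. Hence the one-dimensional inputs are precisely the Kolmogorov bounds of Theorem \ref{thm:approxJointGaussian} with $c_l=\theta_l$. In the second case, $\sum_l\theta_lY'_l$ is $N(0,V_\theta^2)$ by the definition \eqref{eq:V_c} of $V_c$, so the inputs come from Theorem \ref{thm: complicatedapproxJointG}.

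\textbf{Step 1: integrate the Kolmogorov bounds.} Because those bounds decay like $(1+\lvert x\rvert)^{-16/5}$, integrating over $x\in\mathbb R$ costs only a constant. The integral appearing in Proposition \ref{prop:multivarCLT} is therefore $\ll E^{1/5}$, uniformly in $\theta$. Here $E$ is the bracketed quantity in Theorem \ref{thm:approxJointGaussian}. In the conditional case the integral is instead $\ll (k\sqrt{\varepsilon_1'+\varepsilon_2})^{1/10}$, on the event of probability \eqref{eq:bigProb}. That event must not depend on $\theta$. Its definition in the proof of Theorem \ref{thm: complicatedapproxJointG} involves $c$, so I will take it uniform. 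One way is to bound the fourth-moment quantity by $\sum_{l_i}\lvert c_{l_1}\cdots c_{l_4}\rvert\,\cdot$(count), using absolute values of the summands. This gives a single random variable dominating the quantity for all $\theta$, and Markov's inequality can then be applied to it. The same treatment works for $V_c^2$, via $\max_{l}\widetilde{\mathbb E}S_{N_l}^2$ and Cauchy--Schwarz.

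\textbf{Step 2: moment parameter $b$.} Take $p=2$, so $q=2$. Then $\|Y\|_2=\sqrt k$, and $\|X\|_2^2=\sum_l\mathbb ES_{N_l}^2=k$, since each $S_N$ has variance $1$ (by orthogonality of $f(n)$, and the normalisation of $\mathfrak T$ in the Steinhaus case). So $b=\sqrt k$ works. In the conditional case, $\widetilde{\mathbb E}\lvert X\rvert^2=\sum_l\widetilde{\mathbb E}S_{N_l}^2$, and $\mathbb E\lvert Y'\rvert^2$ is the same quantity. On the good event this is $\ll k$ up to the harmless factor already controlled in Step 1, so again $b\ll\sqrt k$ up to that factor. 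The exponent in Proposition \ref{prop:multivarCLT} becomes $2/(2k+2)=1/(k+1)$. This gives
\begin{equation*}
\sup_{\|u\|_{\mathrm{Lip}}\le1}\lvert\mathbb EuX-\mathbb EuY\rvert\ll k^{\frac12\left(1-\frac1{k+1}\right)}\,E^{\frac1{5(k+1)}}\le k^{\frac12}E^{\frac1{5(k+1)}},
\end{equation*}
and similarly $k^{1/2}(k\sqrt{\varepsilon_1'+\varepsilon_2})^{1/(10(k+1))}$ in the conditional case.

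\textbf{Step 3: simplify $E$.} Finally I will absorb the terms of $E$ into the displayed form. Since $\varepsilon_1'\le\varepsilon_1$ and everything has been raised to a small power, the stated bound keeps only the dominant terms. Indeed, the $k^2\varepsilon_1$ contribution must be absorbed or assumed dominated; the stated bound seems to suppress it, so I would simply carry it along or note that in the applications it is smaller.

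\textbf{Main obstacle.} I expect the main obstacle to be the uniformity in $\theta$ of the good event in the conditional statement. Proposition \ref{prop:multivarCLT} takes a supremum over all directions, while Theorem \ref{thm: complicatedapproxJointG} is stated for each fixed $c$. This is handled by the domination argument in Step 1, which bounds everything by $\theta$-independent sums over $l_1,\ldots,l_4$, at the cost of the same factor $k^2$ already present.
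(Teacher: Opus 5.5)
Your route is the paper's: Proposition~\ref{prop:multivarCLT} with $p=q=2$ and $b\asymp\sqrt k$, fed by the Kolmogorov bounds of Theorems~\ref{thm:approxJointGaussian} and~\ref{thm: complicatedapproxJointG}, which can be integrated in $x$ at no cost. Two of your additions are correct.

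\emph{Uniformity in $\theta$.} This is a point the paper's one-line proof skips. Since $\sum_l c_l^2=1$, Cauchy--Schwarz gives $\lvert\sum_l c_lM_{p,N_l}\rvert^4\le(\sum_l M_{p,N_l}^2)^2$. This is a $\theta$-free dominant of the conditional fourth-moment term. Its mean is $\ll k^2(\varepsilon_1'+\varepsilon_2)$, by (I$'$) and (II) exactly as in the bound for $A$.

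\emph{The missing terms.} You are right that the first display should contain $k^2(\varepsilon_1+\varepsilon_1')$, as Corollary~\ref{cor:normalcomparison} does. These terms are not dominated by the others, so carry them rather than arguing them away. With that, the unconditional part is complete.

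The step that fails is Step 2 in the conditional case. There $b^2=\sum_l\widetilde{\mathbb E}S_{N_l}^2$ is a random variable of mean exactly $k$. From (I$'$), (II), (III), (IV) you only get Markov: $b^2\le kT$ off an event of probability $1/T$.

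This factor is not harmless, because $b$ enters Proposition~\ref{prop:multivarCLT} as $b^{k/(k+1)}$.
\begin{itemize}
\item Suppose you use the threshold from the proof of Theorem~\ref{thm: complicatedapproxJointG}, namely $b^2\ll k(k\sqrt{\varepsilon_1'+\varepsilon_2})^{-1/12}$. This costs a factor $(k\sqrt{\varepsilon_1'+\varepsilon_2})^{-k/(24(k+1))}$. That swamps the gain $(k\sqrt{\varepsilon_1'+\varepsilon_2})^{1/(10(k+1))}$ once $k\ge 3$.
\item Suppose instead you insist on $b\ll\sqrt k$. Then Markov only gives $b^2\le Ck$ off an event of probability $1/C$, not with probability as in \eqref{eq:bigProb}.
\end{itemize}

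What is missing is concentration of each $\widetilde{\mathbb E}S_{N_l}^2$ around $1$. Expand $\mathbb E(\widetilde{\mathbb E}S_{N_l}^2-1)^2$ as in the computation of $B$. This produces solutions with $P^+(n_1)=P^+(n_2)\neq P^+(n_3)=P^+(n_4)$, which is condition (I) at a single scale. Given (I), the variance is $\ll\varepsilon_1+\varepsilon_2$. Chebyshev and a union bound then give $b^2\le 2k$ with probability $1-O(k(\varepsilon_1+\varepsilon_2))$. So either add (I) to the hypotheses of the second part, or keep $b$ explicit in the bound.

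The paper's own justification (``by exactly the same computation as in the proof of Theorem~\ref{thm: complicatedapproxJointG}'') has the same hole. In the short-interval application, the required concentration is effectively supplied by Proposition~\ref{prop:CountAt4Scales}.
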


\begin{proof}
This follows directly from Proposition \ref{prop:multivarCLT} with $p=q=2$, using the conclusions of Theorems \ref{thm:approxJointGaussian} or \ref{thm: complicatedapproxJointG} to bound the right-hand side. Moreover, note that both norms are proportional to $\sqrt k$ (in the conditional case, this happens with probability as in \eqref{eq:bigProb} by exactly the same computation as in the proof of Theorem \ref{thm: complicatedapproxJointG}). It may be helpful to observe that 
\begin{equation*}
    \mathbb P \left(\sum_{l=1}^k c_lY_l'\leq x \right) =\frac{1}{\sqrt{2\pi V_c^2}} \int_{-\infty}^x e^{-\frac{t^2}{2V_c^2}}\text{d}t,
\end{equation*}
for by definition $\sum_{l=1}^k c_lY_l'$ is a Gaussian random variable with mean $0$ and variance $V_c^2$ (recall \eqref{eq:V_c}). 
\end{proof}

\begin{remark}
    As we remarked before, assumption (IV) places us in a position where 
    \begin{equation*}
         \frac{1}{V_c}\sum_{p>P}\sum_{l=1}^k c_l M_{p, N_l}
    \end{equation*}
    is conditionally a weighted sum of independent random variables. Therefore, instead of proving Theorem \ref{thm: complicatedapproxJointG} and bootstrapping it to Theorem \ref{thm:MCLT}, we could have opted to use the Normal Approximation Result 1 from the work of Harper \cite{Harper2023_ASLF}, which yields a stronger result in terms of the dependence on $k$. However, our argument only utilizes the martingale structure and may potentially be useful in other situations with a weaker assumption than (IV). 
\end{remark}

Theorem \ref{thm:MCLT} may then be used to estimate the probability that $\max_{1\leq l\leq k} S_{N_l}\leq t$ in terms of the probability that $\max_{1\leq l\leq k} Y_{l}\leq t + O(1)$. 
\begin{cor}\label{cor:normalcomparison}
    Adapt the notation and conditions in the statement of Theorem \ref{thm:MCLT}. Then 
    \begin{multline*}
        \mathbb P\left(\max_{1\leq l\leq k} S_{N_l} \leq t\right)\leq \mathbb P\left(\max_{1\leq l\leq k} Y_l\leq t+\eta\right) \\ + O\left(k^{\frac{3}{2}}\eta^{-1} \left(  k^2(\varepsilon_1 + \varepsilon_1'  + \varepsilon_2) + k \max_{1\leq l_1<l_2\leq k} \sqrt{\frac{\lvert \mathcal A_{N_{l_1}}\rvert}{\lvert \mathcal A_{N_{l_2}}\rvert}} \right)^{\frac1{5(k+1)}}\right).
    \end{multline*}
    If additionally (IV) is satisfied (but not necessarily (I)), then with probability as in \eqref{eq:bigProb} over the $f(p)$ for primes $p\leq P$, we have 
    \begin{equation*}
        \widetilde{\mathbb P}\left(\max_{1\leq l\leq k} S_{N_l} \leq t\right)\leq \mathbb P\left(\max_{1\leq l\leq k} Y_l'\leq t+\eta\right)  + O\left(k^{\frac{3}{2}}\eta^{-1} \left(k\sqrt{\varepsilon_1' + \varepsilon_2}\right)^{\frac1{10(k+1)}}\right).
    \end{equation*}
    
\end{cor}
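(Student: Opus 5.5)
The plan is to smooth the indicator of the event $\{\max_l x_l\le t\}$ by a Lipschitz function and then apply Theorem \ref{thm:MCLT}. Fix $\eta>0$ and define $u:\mathbb R^k\to[0,1]$ by
\begin{equation*}
u(x) = \max\left\{0, 1-\eta^{-1}\,\mathrm{dist}\left(x, (-\infty,t]^k\right)\right\},
\end{equation*}
where $\mathrm{dist}$ is the Euclidean distance. Then $u\equiv 1$ on $\{\max_l x_l\le t\}$ and $u\equiv 0$ outside the $\eta$-neighbourhood of $(-\infty,t]^k$. If $\max_l x_l> t+\eta$, some coordinate exceeds $t+\eta$, so the distance is $>\eta$ and $u(x)=0$. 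Hence
\begin{equation*}
\mathbf 1\left\{\max_l x_l\le t\right\}\le u(x)\le \mathbf 1\left\{\max_l x_l\le t+\eta\right\},
\end{equation*}
and $\eta u$ has Lipschitz semi-norm at most $1$, because distance functions are $1$-Lipschitz and truncation preserves this.

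Next, with $X=(S_{N_1},\ldots,S_{N_k})$, I would chain
\begin{equation*}
\mathbb P\left(\max_l S_{N_l}\le t\right)\le \mathbb E u(X) \le \mathbb E u(Y) + \eta^{-1}\sup_{\|v\|_{\mathrm{Lip}}\le1}\lvert\mathbb E vX-\mathbb E vY\rvert\le \mathbb P\left(\max_l Y_l\le t+\eta\right)+\eta^{-1}\cdot(\text{error}).
\end{equation*}
The error term comes from the first part of Theorem \ref{thm:MCLT}. The one point to check is the power of $k$. The stated bound carries $k^{3/2}$ instead of the $k^{1/2}$ of Theorem \ref{thm:MCLT}, and it contains $\varepsilon_1,\varepsilon_1'$ inside the bracket. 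So I would go back to Proposition \ref{prop:multivarCLT} directly. Take $p=q=2$ and $b\ll\sqrt k$, which holds since $\mathbb E|X|^2=\sum_l\mathbb E S_{N_l}^2=k$, and the same is true for $Y$. For each unit $\theta$, integrate the Kolmogorov bound of Theorem \ref{thm:approxJointGaussian} over $x$. Since $\int(1+|x|^{16/5})^{-1}\,dx<\infty$, the integrated difference is $\ll(\cdots)^{1/5}$, with the bracket as in Theorem \ref{thm:approxJointGaussian}. It is dominated by $k^2(\varepsilon_1+\varepsilon_1'+\varepsilon_2)+k\max\sqrt{|\mathcal A_{N_{l_1}}|/|\mathcal A_{N_{l_2}}|}$. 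Raising to the power $2/(2k+2)=1/(k+1)$ and multiplying by $b^{1-2/(2k+2)}\le b\ll k^{1/2}$ gives the claimed shape. The extra factor $k$, making $k^{3/2}$ in total, can be absorbed harmlessly, since the statement is only an upper bound. Any polynomial loss from crude norm bookkeeping is covered as well, for instance bounding $b$ via $\|X\|_2$ in a non-normalized form.

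For the conditional statement I would repeat the same smoothing argument with $\widetilde{\mathbb E}$ and $Y'$. On the event of probability \eqref{eq:bigProb}, I would invoke the second part of Theorem \ref{thm:MCLT}. On that event the conditional second moments $\widetilde{\mathbb E}S_{N_l}^2$ sum to $O(k)$ up to the factor controlled in the proof of Theorem \ref{thm: complicatedapproxJointG}, so $b\ll\sqrt k$ again up to harmless factors. This gives
\begin{equation*}
\widetilde{\mathbb P}\left(\max_l S_{N_l}\le t\right)\le\mathbb P\left(\max_l Y'_l\le t+\eta\right)+O\left(k^{3/2}\eta^{-1}\left(k\sqrt{\varepsilon_1'+\varepsilon_2}\right)^{1/(10(k+1))}\right).
\end{equation*}

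I expect the only delicate step to be the bookkeeping of the $k$-powers and of the conditional second-moment bound, that is, making sure $b$ is uniformly $O(\sqrt k)$ (or $O(k^{3/2})$) on the good event. Everything else reduces to the Lipschitz sandwich above.
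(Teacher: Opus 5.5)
Your proposal is correct and follows the paper's route. You sandwich $\mathbf 1\{\max_l x_l\le t\}\le u\le \mathbf 1\{\max_l x_l\le t+\eta\}$ with a Lipschitz $u$ and apply the Wasserstein bound of Theorem \ref{thm:MCLT}. You are right to rebuild the unconditional case from Proposition \ref{prop:multivarCLT} and Theorem \ref{thm:approxJointGaussian}, since the printed first bound of Theorem \ref{thm:MCLT} omits $\varepsilon_1,\varepsilon_1'$. For the conditional case it suffices to invoke the second half of Theorem \ref{thm:MCLT} on the event of probability \eqref{eq:bigProb}; you do not need your own ``up to harmless factors'' bound on $b$.

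The only difference from the paper is the test function. The paper takes $u=\prod_l s(x_l)$ and bounds its Lipschitz constant by $k\eta^{-1}$, which is where its $k^{3/2}$ comes from. Your truncated distance to $(-\infty,t]^k$ is $\eta^{-1}$-Lipschitz independently of $k$, so your argument actually gives the sharper factor $k^{1/2}\eta^{-1}$.
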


\begin{proof}
    Take $u(x_1, \ldots, x_k) = \prod_{i=l}^k s(x_l)$ in Theorem \ref{thm:MCLT}, where $s$ is a smooth function\footnote{One possibility is $s(x) = \phi(t+\eta - x)/(\phi(t+\eta - x) + \phi(x-t))$ with $\phi(x)=\exp(-1/x)$ for $x>0$ and $\phi(x)=0$ for $x\leq 0$.} such that $s(x) = 1$ if $x\leq t$ and $s(x) = 0$ if $x>t+\eta$. Note that the Lipschitz constant of $u$ is bounded by $k\eta^{-1}$, so after rescaling $u$ by $k\eta^{-1}$, we get an extra $k\eta^{-1}$ in the error. Observe that the difference between the desired probabilities is bounded by $\mathbb E u(S_{N_1}, \ldots, S_{N_k}) - \mathbb EuY$, thus finishing the proof.
\end{proof}

\subsection{Slow variation} Here we shall prove a general slow variation property of sums of random multiplicative functions, which is also applicable in short intervals. This generalizes a result found in the work of Lau, Tenenbaum and Wu \cite{LauTenenbaumWu2013MeanValuesRMF}, making use of Lemma \ref{lemma:billings}. In this paper, we will use this proposition in proving the almost sure upper bound in the case of polynomial images, but it is relevant in proving almost sure upper bounds in other cases too. \begin{prop}\label{prop:slowVar}
Let $f$ be a Rademacher (or Steinhaus) random multiplicative function. Let $\mathcal A$ be an infinite subset of the square-free (or all) positive integers, and suppose that $\mathcal A_N=\mathcal A\cap (N-H, N]$ for some $N\geq H=H(N)\to\infty$ as $N\to\infty$. For any increasing sequence $(N_l)_{l=1}^\infty$ such that $N_1$ is large enough and $N_{l+1}-H_{l+1}\leq N_l$ for all $l$, we have
\begin{multline*}
\mathbb P\left(\max_{N_l<N\leq N_{l+1}}\left\lvert\sum_{n\in\mathcal A_N\setminus\mathcal A_{N_l}}f(n)\right\rvert\geq\lambda\right) \\  \ll \frac{1}{\lambda^4}\left(\left(\sum_{\substack{N_l - H_l<n\leq N_{l+1} - H_{l+1} \\ n\in\mathcal A}} \tau_3(n)\right)^2 + \left(\sum_{\substack{N_l<n\leq N_{l+1} \\ n\in\mathcal A}} \tau_3(n)\right)^2 \right).
\end{multline*} 
\end{prop}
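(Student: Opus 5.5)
We have, for $N_l<N\le N_{l+1}$,
\[
\sum_{n\in\mathcal A_N\setminus\mathcal A_{N_l}}f(n)=\sum_{\substack{n\in\mathcal A\\ N_l<n\le N}}f(n)-\sum_{\substack{n\in\mathcal A\\ N_l-H_l<n\le N-H}}f(n).
\]
This holds because $N_{l+1}-H_{l+1}\le N_l$. Since $H$ is increasing, we have $N-H(N)\ge N_l-H_l$ whenever $N-H(N)$ is monotone, so that the second sum runs over an initial segment of $(N_l-H_l,N_{l+1}-H_{l+1}]$. If $N-H(N)$ is not monotone, I would instead take the maximum over all endpoints in that range, which only enlarges the event. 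By the triangle inequality, the event in question is contained in the union of
\[
\Big\{\max_{N_l<M\le N_{l+1}}\Big|\sum_{\substack{n\in\mathcal A\\ N_l<n\le M}}f(n)\Big|\ge\lambda/2\Big\}
\quad\text{and}\quad
\Big\{\max_{N_l-H_l<M\le N_{l+1}-H_{l+1}}\Big|\sum_{\substack{n\in\mathcal A\\ N_l-H_l<n\le M}}f(n)\Big|\ge\lambda/2\Big\}.
\]
It therefore suffices to bound each of these probabilities by Lemma \ref{lemma:billings}. For the first event, I would take the summands $X_k=f(n)1_{n\in\mathcal A}$ indexed by $n\in(N_l,N_{l+1}]$ with weights $u_n=\tau_3(n)1_{n\in\mathcal A}$; the second event is treated identically with the other range. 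In the Steinhaus case one can apply the lemma to real and imaginary parts separately, or note that the lemma only uses the moment hypothesis.

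The key input is the fourth moment bound on any subinterval $I=(a,b]$:
\[
\mathbb E\Big|\sum_{n\in\mathcal A\cap I}f(n)\Big|^4\ll\Big(\sum_{n\in\mathcal A\cap I}\tau_3(n)\Big)^2.
\]
Combined with Markov's inequality, this gives the hypothesis of Lemma \ref{lemma:billings} up to a constant, which can be absorbed by rescaling the $u_n$. To prove the bound, expand the fourth moment. It counts quadruples $n_1,\dots,n_4\in\mathcal A\cap I$ with $n_1n_2n_3n_4=\square$ in the Rademacher case, or $n_1n_2=n_3n_4$ in the Steinhaus case. In the Steinhaus case, fix $n_1,n_2$. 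Then $n_3$ is a divisor of $n_1n_2$, so the count is at most $\sum_{n_1,n_2\in I}\tau(n_1n_2)\le\sum_{n_1,n_2}\tau(n_1)\tau(n_2)$, which is bounded by $(\sum\tau_3)^2$ since $\tau\le\tau_3$. In the Rademacher case the $n_i$ are square-free. Write $n_1n_2=g^2m$ and $n_3n_4=g'^2m'$ with $m,m'$ square-free, so that the condition forces $m=m'$. Given $n_1,n_2$, we need $n_3n_4=m\cdot(\text{square})$. I would organise the count by writing $n_1=ab$, $n_2=ac$, $n_3=de$, $n_4=df$ with $bc=ef$; this is the standard parametrisation of $n_1n_2n_3n_4=\square$ for square-free entries. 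Fixing $n_1,n_2$ determines $bc$, and hence $(e,f)$ in at most $\tau(bc)$ ways. It remains to choose $d$ with $de,df\in I$, and for a fixed $e$ there are at most about $|I|/e+1$ such $d$.

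This last bookkeeping is the step I expect to be the main obstacle: showing that the total is $\ll(\sum_{I}\tau_3)^2$ rather than picking up an extra factor of $|I|$. I would aim to bound it as follows. Attribute each quadruple to the pair $(n_1,n_3)$, so that $n_2$ is determined up to divisor choices by $n_1$ and $n_3$, and similarly $n_4$. The number of completions is then bounded by a sum over divisors of $n_1$ and $n_3$ of $\tau$ of the cofactors, which is $\le\tau_3(n_1)\tau_3(n_3)$. This uses the identity $\sum_{a\mid n}\tau(n/a)=\tau_3(n)$. Concretely, given $n_1=ab$ and $n_3=de$, choosing $a\mid n_1$ and $d\mid n_3$ together with $b,e$ fixes the square class, and the remaining freedom is a divisor splitting of $bc=ef$. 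Summing over $n_1$ and $n_3$ then yields $(\sum_{n\in\mathcal A\cap I}\tau_3(n))^2$. Applying Lemma \ref{lemma:billings} with this choice of weights to both ranges and combining the two bounds gives the stated estimate.
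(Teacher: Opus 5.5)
Your architecture is the paper's. You use the same splitting of the difference into a sum over $(N_l,N]$ and a sum over $(N_l-H_l,N-H]$, the triangle inequality at level $\lambda/2$, and Billingsley's maximal inequality fed by a fourth-moment tail bound via Markov. The one divergence is the source of $\mathbb E|\sum_{n\in\mathcal A\cap I}f(n)|^4\ll(\sum_{n\in\mathcal A\cap I}\tau_3(n))^2$. The paper quotes Harper's rough hypercontractive inequality; you try to prove it by counting. Your Steinhaus count (fix $n_1,n_2$, then $n_3\mid n_1n_2$) is correct.

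The Rademacher count has a genuine gap. You claim that once $(n_1,n_3)$ is fixed, the number of completions $(n_2,n_4)$ is at most $\tau_3(n_1)\tau_3(n_3)$. This is false. If $n_1=n_3$, then every pair $n_2=n_4\in\mathcal A\cap I$ gives a solution, so there are $\#(\mathcal A\cap I)$ completions. In your parametrisation, $b$ and $e$ do not fix the square class. Write $t=(b,e)$, $b=tb'$, $e=te'$. Then $bc=ef$ has the free family $c=e's$, $f=b's$, with $s$ constrained only by $ae's,\,db's\in I$. Because $n_1n_2n_3n_4=\square$ is symmetric in all four variables, the same diagonal obstruction arises whichever pair of indices you fix. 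So no fixed attribution of quadruples to a pair of the $n_i$ can bound completions by divisor functions in the Rademacher case.

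The missing step is a Cauchy--Schwarz reduction to the Steinhaus-type equation. Let $F=\sum_{n\in\mathcal A\cap I}f(n)$. For square-free $n_1=ga$, $n_2=gb$ with $g=(n_1,n_2)$, one has $f(n_1)f(n_2)=f(ab)$. Hence $F^2=\sum_m c(m)f(m)$ over square-free $m$, with $c(m)\le\sum_{ab=m}w(a,b)$ and $w(a,b)=\#\{g: ga,gb\in\mathcal A\cap I\}$. By orthogonality $\mathbb EF^4=\sum_m c(m)^2$. Cauchy--Schwarz over the $\tau(m)$ factorisations then gives $\mathbb EF^4\le\sum_{a,b}\tau(a)\tau(b)\,\#\{(g,h): ga,gb,ha,hb\in\mathcal A\cap I\}$.

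Now attributing $(a,b,g,h)$ to the pair $(ga,hb)$ does work, since choosing $a\mid ga$ and $b\mid hb$ determines $g$ and $h$. This yields the bound $\sum_{n,n'}\sum_{a\mid n}\tau(a)\sum_{b\mid n'}\tau(b)=(\sum_{n\in\mathcal A\cap I}\tau_3(n))^2$. Alternatively, cite the hypercontractive bound as the paper does. With either fix, the rest of your argument goes through.
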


\begin{proof}
Suppose that 
\begin{equation*}
    \max_{N_l<N\leq N_{l+1}}\left\lvert\sum_{n\in\mathcal A_N\setminus\mathcal A_{N_l}}f(n)\right\rvert\geq \lambda.
\end{equation*} 
Writing 
\begin{equation*}
    \sum_{n\in\mathcal A_N\setminus\mathcal A_{N_l}}f(n) = -\sum_{\substack{N_l-H_l<n\leq N-H \\ n\in\mathcal A}}f(n) + \sum_{\substack{N_l<n\leq N \\ n\in\mathcal A}}f(n),
\end{equation*} it follows by the triangle inequality that 
\begin{equation*}
    \max_{N_l<N\leq N_{l+1}}\left\lvert \sum_{\substack{N_l-H_l<n\leq N-H \\ n\in\mathcal A}}f(n)\right\rvert \geq \frac{\lambda}2 \hspace{0.5cm}\text{ or }\hspace{0.5cm} \max_{N_l<N\leq N_{l+1}}\left\lvert \sum_{\substack{N_l<n\leq N \\ n\in\mathcal A}}f(n)\right\rvert\geq \frac{\lambda }{2}.
\end{equation*} 
Let $M_l = N_l - H_l$. Note that for every $u<v$ we have
\begin{equation*}
    \mathbb P\left(\left\lvert \sum_{\substack{u < n\leq v \\ n\in\mathcal A}}f(n) \right\rvert\geq\lambda\right)\leq \frac{1}{\lambda^4}\left(\sum_{\substack{u<n\leq v \\ n\in\mathcal A}} \tau_3(n)\right)^2
\end{equation*} by the rough hypercontractive inequality (see \cite[Probability Result 1]{Harper2019_MomentsII} for example). Employing Lemma \ref{lemma:billings} for $\sum_{M_l<n\leq M_{l+1}}f(n)$ and $\sum_{N_l<n\leq N_{l+1}}f(n)$ gives 
\begin{multline*}
    \mathbb P\left(\max_{N_l<N\leq N_{l+1}}\left\lvert\sum_{n\in\mathcal A_N\setminus\mathcal A_{N_l}}f(n)\right\rvert\geq \lambda\right) \\ \ll \frac{1}{\lambda^4}\left(\left(\sum_{\substack{M_l<n\leq M_{l+1} \\ n\in\mathcal A}} \tau_3(n)\right)^2 + \left(\sum_{\substack{N_l<n\leq N_{l+1} \\ n\in\mathcal A}} \tau_3(n)\right)^2 \right),
\end{multline*}
finishing the proof. 
\end{proof}

\section{Proof of Theorem \ref{thm:asboundsPoly}: Polynomial images}
To prove our theorem for polynomial images, we will only need Theorem \ref{thm:approxJointGaussian} and the simpler part of Corollary \ref{cor:normalcomparison}. Let $P$ be a polynomial as in the statement of Theorem \ref{thm:asboundsPoly} and consider\footnote{We are instead considering $\mathcal A_N\subseteq [1, c_PN^{\deg P}]$ for convenience due to the scaling of polynomial images.} $\mathcal A_N = \{P(n) : n\leq N, P(n)>0\}$. Note that $\lvert \mathcal A_N\rvert \sim \kappa_PN$ for some $\kappa_P>0$ depending\footnote{In the Steinhaus case we have $\kappa_P=1$, whereas in the Rademacher case, $\kappa_P$ is the density of square-free values of $P$.} on $P$. Recall 
\begin{equation}\label{eq:S_NdefPoly}
     S_N = \frac{1}{\sqrt{\lvert \mathcal A_N\rvert}}\sum_{n\in\mathcal A_N} \mathfrak Tf(n) \sim \frac{1}{\sqrt{\kappa_P N}} \sum_{\substack{n\leq N}} \mathfrak Tf(P(n)).
\end{equation}
Moreover, for a large parameter $X$, let $N_l = \lambda^l X$ with $\lambda = \exp\left(\sqrt{\log X}\right)$, where $1\leq l\leq k = (\log X)^{\varepsilon_0}$ with $\varepsilon_0>0$ to be specified later. We begin by checking that these sets satisfy properties (I), (I'), (II) and (III). 

\subsubsection*{(I)} We employ the result of \cite{ChinisShala2025}
 (\cite{KlurmanShkredovXu2023}), which we recall here. 
 
 \begin{prop}
 Let $P$ be a polynomial as in the statement of Theorem \ref{thm:asboundsPoly}. There exists a constant $\delta_P>0$ such that the number of non-trivial solutions to the equation 
 \begin{equation*}
     P(n_1)P(n_2)P(n_3)P(n_4) = \square \hspace{0.5cm} (\text{or }P(n_1)P(n_2) = P(n_3)P(n_4))
 \end{equation*} with $n_i\leq N$ is $\ll N^{2-\delta_P}$. 
 \end{prop}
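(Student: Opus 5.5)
This proposition is not new. It is exactly the arithmetic input behind the central limit theorems of \cite{ChinisShala2025} (Rademacher case) and \cite{KlurmanShkredovXu2023} (Steinhaus case), so the plan is to import it rather than reprove it. The hypotheses on $P$ in Theorem \ref{thm:asboundsPoly} were chosen to coincide with those papers:
\begin{itemize}
\item Rademacher case: $P$ is a product of at least two distinct linear factors, or irreducible quadratic, with infinitely many square-free values.
\item Steinhaus case: $P$ is not of the form $w(x+c)^d$.
\end{itemize}
In both works the fourth moment of $N^{-1/2}\sum_{n\le N} f(P(n))$ is computed as (trivial solutions) $+ O(N^{2-\delta_P})$, for every polynomial in the stated class. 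The linear-factor case was never excluded at the counting stage; it was excluded only later, in the large-prime conditioning step. So the first step is to check that the cited fourth moment statements are literally the count above. The count must be uniform with all $n_i\le N$, and it concerns non-trivial solutions, meaning the $P(n_i)$ are not equal in pairs. Equal values $P(n_i)=P(n_j)$ with $n_i\neq n_j$ occur for only $O(N)$ pairs and can be absorbed.

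For orientation, here is how I would recover the bound if needed. In the Steinhaus case, $P(n_1)P(n_2)=P(n_3)P(n_4)$ defines a threefold in $\mathbb A^4$. Its non-trivial component has no linear or otherwise degenerate part carrying $\gg N^{2}$ integer points, because $P$ is not a translate of a pure power. A determinant-method bound of Bombieri--Pila/Heath-Brown type then gives $N^{2-\delta}$ points with height at most $N$. Alternatively one can fix $(n_1,n_3)$ and use a gcd/divisor parametrisation $P(n_1)=ab$, $P(n_3)=ac$, $P(n_2)=cd$, $P(n_4)=bd$, combined with divisor bounds and the congruence-counting function $\rho$ from Proposition \ref{prop:nair-tenenbaum}.

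In the Rademacher case, write $P(n_i)=s_i t_i^2$ with $s_i$ square-free; in fact $P(n_i)$ itself is square-free on the support of $f$. The condition $\prod P(n_i)=\square$ becomes $s_1s_2=s_3s_4$ after pairing. When $P=\prod_j L_j$ is a product of linear forms, the coprimality (up to bounded factors) of the $L_j(n)$ splits the equation into a system of equations of the form $L_j(n_1)L_{j'}(n_2)=\square\cdot(\text{bounded})$. Each is a Pell-type or ``square times bounded'' condition, and the non-diagonal solutions are power-saving sparse. The key point is that $P$ has at least two distinct linear factors: for $P=L^d$ every choice would be a square and the count would be trivial-sized.

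I expect the main obstacle in a self-contained proof to be the Rademacher linear-factor case. There, gcd's between different $L_j(n_i)$ across different $i$ create many near-trivial families that must be shown to contribute $O(N^{2-\delta})$. For the purposes of this paper, however, I would simply quote \cite{ChinisShala2025} and \cite{KlurmanShkredovXu2023}, and check that their $\delta_P$ is uniform in the range $n_i\le N$.
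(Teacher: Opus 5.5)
Your plan matches the paper, which does not prove this proposition but simply recalls it from \cite{ChinisShala2025} (Rademacher) and \cite{KlurmanShkredovXu2023} (Steinhaus); their power-saving fourth-moment counts already cover products of linear factors, since the degree-two restriction in those works entered only in the almost sure lower bound, via large prime factors. Some of your side remarks are inaccurate, though none is needed for the citation: square-free $s_i$ with $s_1s_2s_3s_4=\square$ need not satisfy $s_1s_2=s_3s_4$ under any pairing (e.g.\ $6\cdot 10\cdot 15\cdot 1=30^2$); a single linear factor is excluded in the Rademacher case because it has $\gg N^2$ non-trivial solutions, not because the count is trivial-sized; and coincidences $P(n_i)=P(n_j)$ with $n_i\neq n_j$ are negligible only because there are $O(1)$ of them ($P$ is eventually monotone), whereas an $O(N)$ bound on such pairs would give only $O(N^2)$ quadruples.
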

 
Using the above with $N = \max\{N_{l_1}, \ldots, N_{l_4}\}\ll \lambda^k X$ allows us to take $\varepsilon_1 \ll X^{-\delta_P'}$ for some slightly smaller fixed $\delta_P'>0$ (possibly depending on $P$), since $\lambda$ is so that $\lambda^k\ll_{\epsilon}X^{\epsilon}$ for all $\epsilon>0$. 

\subsubsection*{(I')} We will simply take $\varepsilon_1' =\varepsilon_1$.  

\subsubsection*{(II)} 
    We check condition (II) as follows. We introduce a smoothness parameter $y$, and split the range of $P^+(n_1) = P^+(n_2)=p$ to $p\leq y$ and $p>y$. The latter range also contains very large primes, which are dealt with separately. In any case, we show the contribution of primes $p>y$ is bounded above by 
    \begin{equation*}
\sum_{p>y}\frac{\lvert\mathcal A_{N_{l_1}}\rvert\lvert\mathcal A_{N_{l_2}}\rvert}{p^2}\ll \frac{\lvert \mathcal A_{N_{l_1}}\rvert\lvert\mathcal A_{N_{l_2}}\rvert}{y}.
    \end{equation*}
    The contribution of primes $p\leq y$ is bounded by the product of the number of $y$-smooth values of $n\in\mathcal A_N$ and the number of $y$-smooth values of $m\in\mathcal A_M$. Optimizing the choice of $y$ yields the saving $\varepsilon_2$. 
    
Concretely, using the calculations from the end of \cite[Section 4]{ChinisShala2025} (\cite[Section 2]{KlurmanShkredovXu2023}), we may take 
$$\varepsilon_2 \ll \frac{1}{y} + \frac{\psi_P(N_{l_1}, y)\psi_P(N_{l_2}, y)}{N_{l_1}N_{l_2}},$$ where $\psi_P(N, y)$ is the number of $n\leq N$ such that $P(n)$ is $y$-smooth. By a result of Hmyrova \cite{Hmyrova1966}, when $y\geq\log N$ we have the bound\footnote{The bound there is stated for irreducible $P$, but obviously the number of values of $n\leq N$ for which $P(n)$ is $y$-smooth is bounded above by the number of $n\leq N$ for which some irreducible factor of $P(n)$ is $y$-smooth.} $\psi_P(N, y)\ll N(e/u)^u$, where $u=\log N/\log y$. After an unpleasant exercise in manipulating logarithms and optimizing in $y$, 
we choose $y  = \exp\left(\sqrt{\log X\log\log X}\right)$. This gives 
$\varepsilon_2 \ll \exp(-\sqrt{\log X\log\log X})$. 

\subsubsection*{(III)} Note that the sets $\mathcal A_N$ are nested, so (III$_a$) is satisfied. 

\subsection{Almost sure upper bound} We now have everything we need to prove the almost sure upper bound. We first apply Theorem \ref{thm:approxJointGaussian} with $k=1$ (at a single scale), yielding the following quantitative central limit theorem. 

\begin{cor}\label{cor:quantCLTpoly}
    Let $f$ be a Rademacher (or Steinhaus) random multiplicative function, and let $P\in\mathbb Z[x]$ be a polynomial as in Theorem \ref{thm:asboundsPoly}. We have
     \begin{multline*}
\left\lvert\mathbb P\left(\frac{1}{\sqrt{\kappa_P N}}\sum_{n\leq N}\mathfrak T f(P(n))\leq x \right) - \frac{1}{\sqrt{2\pi}}\int_{-\infty}^x e^{-\frac{t^2}{2}}\emph{d}t\right\rvert \\ \ll \frac{1}{1+\lvert x\rvert^{\frac{16}{5}}}\exp\left(-\frac{1}{5} \sqrt{\log N\log\log N}\right).
    \end{multline*}
\end{cor}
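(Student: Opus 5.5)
We apply Theorem \ref{thm:approxJointGaussian} with $k=1$, $c_1=1$ and the single scale $N_1=N$. With $k=1$ the term $k\max_{l_1<l_2}\sqrt{\lvert\mathcal A_{N_{l_1}}\rvert/\lvert\mathcal A_{N_{l_2}}\rvert}$ is an empty maximum and disappears. The right-hand side therefore reduces to
\begin{equation*}
\frac{1}{1+\lvert x\rvert^{16/5}}\left(\varepsilon_1+\varepsilon_1'+\varepsilon_2\right)^{1/5},
\end{equation*}
with all four variables $n_i$ ranging over $\mathcal A_N=\{P(n): n\leq N,\ P(n)>0\}$. So it suffices to show
\begin{equation*}
\varepsilon_1+\varepsilon_1'+\varepsilon_2\ll\exp\left(-\sqrt{\log N\log\log N}\right),
\end{equation*}
and then pass from $S_N$ to the normalisation $(\kappa_P N)^{-1/2}\sum_{n\leq N}\mathfrak Tf(P(n))$.

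For (I) and (I'), we use the proposition of \cite{ChinisShala2025} (\cite{KlurmanShkredovXu2023}) on the fourth moment equation. It bounds the number of non-trivial solutions, even without the largest-prime-factor constraints, by $\ll N^{2-\delta_P}$. Since $\lvert\mathcal A_N\rvert\asymp N$, this allows $\varepsilon_1=\varepsilon_1'\ll N^{-\delta_P}$, which is far smaller than the target.

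For (II), the bound on $\varepsilon_2$, we split by whether the common largest prime factor $p=P^+(P(n_1))=P^+(P(n_2))$ is at most or greater than a parameter $y$.
\begin{itemize}
\item For $p>y$, the roots of $P$ modulo $p$ give $\ll N/p+1$ choices of each $n_i$ with $p\mid P(n_i)$, so the pairs number $\ll\sum_{p>y}(N/p+1)^2$. The $N^2/p^2$ part contributes $\ll N^2/y$. The primes $p>N$, where the $+1$ terms dominate, must be handled as in \cite[Section 4]{ChinisShala2025}: a given $n$ has only $O(1)$ prime factors exceeding $N$, and each such $p$ divides $P(m)$ for $\ll 1$ values $m\leq N$. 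This gives $O(N)$ pairs, which is negligible.
\item For $p\leq y$, both $P(n_1)$ and $P(n_2)$ are $y$-smooth, giving at most $\psi_P(N,y)^2$ pairs. Hmyrova's bound then gives $\psi_P(N,y)/N\ll (e/u)^u$ with $u=\log N/\log y$.
\end{itemize}

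We choose $y=\exp(\sqrt{\log N\log\log N})$, so that $u=\sqrt{\log N/\log\log N}$ and $\log(u/e)\sim\tfrac12\log\log N$. This gives
\begin{equation*}
(e/u)^{2u}=\exp\left(-(1+o(1))\sqrt{\log N\log\log N}\right),
\end{equation*}
so $\varepsilon_2\ll\exp(-(1-o(1))\sqrt{\log N\log\log N})$.

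Condition (III$_a$) holds trivially for a single set. Substituting into Theorem \ref{thm:approxJointGaussian} gives the exponent $\tfrac15\sqrt{\log N\log\log N}$, up to the $(1+o(1))$ loss. I expect the constant to be fixable by taking $y=\exp(c\sqrt{\log N\log\log N})$ with a slightly different $c$. Otherwise the stated exponent should be read with an $o(1)$ absorbed, consistent with the rough ``$\exp(-\sqrt{\log X})$'' description in the overview.

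Finally, to replace $\lvert\mathcal A_N\rvert^{-1/2}$ by $(\kappa_P N)^{-1/2}$, we need the square-free density asymptotic $\lvert\mathcal A_N\rvert=\kappa_P N(1+O(N^{-\eta}))$ for some $\eta>0$, available for the polynomials in Theorem \ref{thm:asboundsPoly}. Rescaling $x$ by $1+O(N^{-\eta})$ changes the Gaussian distribution function by $O(N^{-\eta}\lvert x\rvert e^{-x^2/4})$, which is absorbed by the right-hand side. The finitely many $n$ with $P(n)\leq 0$ contribute $O(N^{-1/2})$, which is also absorbed after a slight shift in $x$.

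The main obstacle is the bookkeeping in (II): handling primes $p>N$ and the repeated-root (non-squarefree) cases for reducible $P$, and tracking the constant in the exponent. Everything else is a direct substitution into Theorem \ref{thm:approxJointGaussian}.
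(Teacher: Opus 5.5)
Your proposal is correct and is essentially the paper's argument: Theorem \ref{thm:approxJointGaussian} at the single scale $k=1$, with $\varepsilon_1=\varepsilon_1'\ll N^{-\delta_P}$ from the fourth-moment counts of \cite{ChinisShala2025, KlurmanShkredovXu2023}, $\varepsilon_2\ll 1/y+\psi_P(N,y)^2/N^2$ via Hmyrova's bound with $y=\exp(\sqrt{\log N\log\log N})$, and (III$_a$) holding trivially. Your $(1+o(1))$ caveat is accurate and applies equally to the paper's claim $\varepsilon_2\ll\exp(-\sqrt{\log X\log\log X})$; however, with the bound in the quoted form $(e/u)^u$, no rescaling $y=\exp(c\sqrt{\log N\log\log N})$ removes it, since the two terms save $c$ and $(1-o(1))/c$ times $\sqrt{\log N\log\log N}$, so the corollary should be read with $\frac15$ replaced by any smaller constant, which is all the later applications use.
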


We will use Corollary \ref{cor:quantCLTpoly} at certain test points $(N_l)_{l=1}^\infty$, combined with the following almost sure slow variation result between the test points.

\begin{lemma}\label{lemma:slowVarPoly} Let $f$ be a Rademacher (or Steinhaus) random multiplicative function, and let $P\in\mathbb Z[x]$ be a polynomial as in Theorem \ref{thm:asboundsPoly}. 
For any constant $A>0$, there exists a constant $c>0$ such that for $N_l = \lfloor e^{l^c}\rfloor$, we have that
    \begin{equation*}
        \max_{N_{l}<N\leq N_{l+1}} \left\lvert\sum_{n\leq N} f(P(n)) - \sum_{n\leq N_{l}} f(P(n)) \right\rvert \ll \frac{\sqrt{N_{l+1}}}{(\log N_{l+1})^A}
    \end{equation*} holds almost surely for every $l\in\mathbb N$. 
\end{lemma}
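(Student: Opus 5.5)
The plan is to apply Proposition \ref{prop:slowVar} with $\mathcal A$ the set of positive values $P(n)$, indexed by $n$. Concretely, we take $H(N)=N$: the sums are over all $n\leq N$, and the difference between the sums up to $N$ and up to $N_l$ is a sum over $N_l<n\leq N$. The first term in Proposition \ref{prop:slowVar} then disappears, because the lower endpoints coincide. Equivalently, we apply Lemma \ref{lemma:billings} directly to the partial sums $\sum_{N_l<n\leq N} f(P(n))$ for $N_l<N\leq N_{l+1}$, with weights $u_n=\tau_3(\lvert P(n)\rvert)$. The input hypothesis is the rough hypercontractive bound
\begin{equation*}
\mathbb E\Big\lvert \sum_{u<n\leq v} f(P(n))\Big\rvert^4 \leq \Big(\sum_{u<n\leq v}\tau_3(\lvert P(n)\rvert)\Big)^2 .
\end{equation*}
(Repeated $n$ give the same value only boundedly often, so this costs only a constant.) This gives
\begin{equation*}
\mathbb P\Big(\max_{N_l<N\leq N_{l+1}}\Big\lvert\sum_{N_l<n\leq N}f(P(n))\Big\rvert\geq \lambda_l\Big)\ll \lambda_l^{-4}\Big(\sum_{N_l<n\leq N_{l+1}}\tau_3(\lvert P(n)\rvert)\Big)^2 .
\end{equation*}

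Next we bound the divisor sum with Lemma \ref{lemma:PolyBound}. Setting $H=N_{l+1}-N_l$, this requires $N_l^{1/3}\leq N_{l+1}-N_l\leq N_l$. With $N_l=\lfloor e^{l^c}\rfloor$ and $c<1$ we have
\begin{equation*}
N_{l+1}-N_l\asymp N_l\,c\,l^{c-1}=N_l\,c\,(\log N_l)^{-(1-c)/c},
\end{equation*}
which certainly lies in this range. Lemma \ref{lemma:PolyBound} then gives
\begin{equation*}
\sum_{N_l<n\leq N_{l+1}}\tau_3(\lvert P(n)\rvert)\ll (N_{l+1}-N_l)(\log N_l)^{c_P}\ll N_{l+1}(\log N_{l+1})^{c_P-(1-c)/c}.
\end{equation*}
We take $\lambda_l=\sqrt{N_{l+1}}(\log N_{l+1})^{-A}$. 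The probability bound becomes
\begin{equation*}
\ll (\log N_{l+1})^{4A+2c_P-2(1-c)/c}=(l+1)^{c(4A+2c_P)-2(1-c)}.
\end{equation*}
Choosing $c>0$ small enough in terms of $A$ and $c_P$ makes this exponent less than $-1$, since the exponent tends to $-2$ as $c\to0$. The probabilities are then summable in $l$, so by Borel--Cantelli the event fails for only finitely many $l$. Enlarging the implied constant to absorb those finitely many $l$ gives the bound for every $l$ almost surely.

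The main obstacle is the tension in choosing $c$. The gaps $N_{l+1}-N_l$ must be a logarithmic power smaller than $N_l$, strong enough to beat both the $(\log N)^{c_P}$ loss from Lemma \ref{lemma:PolyBound} and the target $(\log N)^{-A}$. At the same time the gaps must stay above $N_l^{1/3}$ so that the lemma applies, and the resulting probabilities must decay faster than $1/l$. Taking $c$ small handles the first and third requirements simultaneously, and the second is automatic. Minor checks remain: handling the sign or positivity of $P(n)$ (there are finitely many $n$ with $P(n)\leq 0$), and verifying that the Rademacher restriction to square-free values does not affect the fourth-moment bound, since it only removes terms.
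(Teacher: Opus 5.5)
Your proposal is correct and follows essentially the same route as the paper: Proposition \ref{prop:slowVar} (that is, Lemma \ref{lemma:billings} fed by the rough hypercontractive fourth-moment bound), Lemma \ref{lemma:PolyBound} applied on the gap $N_{l+1}-N_l\asymp c\,l^{c-1}N_l$, the threshold $\sqrt{N_{l+1}}/(\log N_{l+1})^{A}$, summability for small $c$, and Borel--Cantelli. Your bookkeeping is slightly more careful than the paper's written version, which has $(N_{l+1}-N_l)^2$ in the divisor-sum bound and $c_P$ where squaring gives $2c_P$; you use a single power of the gap and the exponent $2c_P$, which only changes the admissible range of $c$ by constants.
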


\begin{proof}
    By Proposition \ref{prop:slowVar}, we have 
\begin{equation*}
\mathbb P\left(\max_{N_l<N\leq N_{l+1}}\left\lvert\sum_{N_l<N\leq N}f(P(n))\right\rvert\geq x\right) \ll \frac{1}{x^4} \left(\sum_{\substack{N_l<n\leq N_{l+1} }} \tau_3(P(n))\right)^2 
\end{equation*}
Lemma \ref{lemma:PolyBound} gives 
\begin{equation*}
        \sum_{N_l<n\leq N_{l+1}} \tau_{3}(P(n)) \ll (N_{l+1} - N_l)^2(\log N_{l+1})^{c_P},
    \end{equation*} 
    for some constant $c_P>0$ depending on $P$. Taking $x = \sqrt{N_{l+1}}/(\log N_{l+1})^A$, we obtain 
    \begin{equation*}
        \mathbb P\left(\max_{N_l<N\leq N_{l+1}}\left\lvert\sum_{N_l<N\leq N}f(P(n))\right\rvert\geq x\right) \ll \left(\frac{N_{l+1} - N_l}{N_{l+1}} \right)^2 (\log N_{l+1})^{c_P +4A}.
    \end{equation*}
Plugging in $N_l = \lfloor e^{l^c}\rfloor$ and summing over $l$ to get a convergent series, we see that choosing $0<c<1/({c_P+4A+2})$ suffices. Applying Borel-Cantelli finishes the proof.
\end{proof} 

We finally prove the almost sure upper bound. By Lemma \ref{lemma:slowVarPoly} with $A=1$, it suffices to show that almost surely there exists a constant $C>0$ such that for all $l\in\mathbb N$ we have
    \begin{equation}\label{eq:ubPoly}
        \left\lvert\sum_{n\leq N_l} f(P(n))\right\rvert\leq C \sqrt{N_l\log\log N_l},
    \end{equation}
    where $N_l$ is as in the statement of the lemma (in the Steinhaus case, we do this separately for the real and imaginary parts). By Corollary \ref{cor:quantCLTpoly}, the probability that the above fails for a fixed $l$ and large enough implied constant $C$ is $\ll (\log N_l)^{-\frac{C^2}{2}}$. Making $C$ larger (in terms of $c$) if necessary, we have that the series of probabilities $$\sum_{l=1}^\infty (\log N_l)^{-\frac{C^2}{2}} = \sum_{l=1}^\infty \frac{1}{l^{\frac{cC^2}{2}}}<\infty,$$ thus \eqref{eq:ubPoly} holds almost surely by Borel-Cantelli (with a possibly even larger constant $C$ to account for the finitely many failures).

\subsection{Almost sure lower bound} Here we will use Corollary \ref{cor:normalcomparison} with the scales $N_1, N_2, \ldots, N_k$ defined at the start of the section. In fact, we will prove something slightly stronger.

\begin{theorem}\label{thm:localizedBoundPoly}
    Let $f$ be a Rademacher (or Steinhaus) random multiplicative function and let $P\in\mathbb Z[x]$  be a polynomial as in Theorem \ref{thm:asboundsPoly}. There exists a constant $c>0$ such that for any large enough $X$, we have that 
    \begin{equation*}
        \max_{X<N\leq X^2} \frac{1}{\sqrt N} \left\lvert\sum_{n\leq N}f(P(n)) \right\rvert \gg \sqrt{\log\log X}
    \end{equation*} holds with a suitably small implied constant with probability $1-O(\exp(-(\log X)^{c}))$. 
\end{theorem}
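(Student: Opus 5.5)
We apply the first (unconditional) part of Corollary \ref{cor:normalcomparison} to the scales fixed at the start of this section: $N_l=\lambda^l X$ with $\lambda=\exp(\sqrt{\log X})$, $1\le l\le k=(\log X)^{\varepsilon_0}$. Since $\lambda^k=\exp((\log X)^{1/2+\varepsilon_0})\le X$ once $\varepsilon_0<1/2$, every $N_l$ lies in $(X,X^2]$. It therefore suffices to show that $\max_{l\le k} S_{N_l}\ge \sqrt{(2-\delta)\log k}-\eta$ with the claimed probability. Here $S_{N_l}\sim(\kappa_P N_l)^{-1/2}\sum_{n\le N_l}\mathfrak Tf(P(n))$, and $\sqrt{\log k}\asymp\sqrt{\varepsilon_0\log\log X}$. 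In the Steinhaus case we work only with $\sqrt2$ times the real part, which is a lower bound for the modulus up to a constant.

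First we record the parameters from the verification of (I)--(III) above: $\varepsilon_1=\varepsilon_1'\ll X^{-\delta_P'}$, $\varepsilon_2\ll\exp(-\sqrt{\log X\log\log X})$, and by nestedness and $|\mathcal A_N|\sim\kappa_P N$,
\[
\max_{l_1<l_2}\sqrt{|\mathcal A_{N_{l_1}}|/|\mathcal A_{N_{l_2}}|}\ll \lambda^{-1/2}=\exp\bigl(-\tfrac12\sqrt{\log X}\bigr).
\]
The quantity inside the error term of Corollary \ref{cor:normalcomparison} is therefore $\ll k^2\exp(-\frac12\sqrt{\log X})\le\exp(-\frac13\sqrt{\log X})$. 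Raising it to the power $1/(5(k+1))$ gives $\exp(-c'\sqrt{\log X}/k)=\exp(-c'(\log X)^{1/2-\varepsilon_0})$. Taking $\eta=1$, the total error is $k^{3/2}\exp(-c'(\log X)^{1/2-\varepsilon_0})\ll\exp(-(\log X)^{c})$ for, say, $\varepsilon_0=1/10$ and any $c<2/5$.

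Next, Corollary \ref{cor:normalcomparison} compares with $Y$, a vector of independent standard Gaussians. For such $Y$ the comparison is elementary:
\[
\mathbb P\Bigl(\max_l Y_l\le t+1\Bigr)=\Phi(t+1)^k\le\exp\bigl(-k(1-\Phi(t+1))\bigr).
\]
Alternatively, Lemma \ref{lemma:normalsBigLemma} with $\epsilon=0$ gives the same. Choose $t=\sqrt{\log k}-1$, which is comfortably below $\sqrt{2\log k}$. Then $1-\Phi(t+1)\gg k^{-1/2}/\sqrt{\log k}$, so this probability is $\le\exp(-k^{1/2}/\sqrt{\log k})=\exp(-(\log X)^{\varepsilon_0/2+o(1)})$.

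Combining the two steps, with probability $1-O(\exp(-(\log X)^c))$ for $c<\varepsilon_0/2$, some $l$ has $S_{N_l}>\sqrt{\log k}-1\gg\sqrt{\log\log X}$. Multiplying by $\sqrt{\kappa_P}$ and using $|\sum f(P(n))|\ge|\sum\mathfrak T f(P(n))|/\sqrt2$ gives the theorem.

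The main obstacle is the bookkeeping in the first step. We must check that the nested-scale term $k\lambda^{-1/2}$, rather than $\varepsilon_1$ or $\varepsilon_2$, dominates. We must also check that the $1/(5(k+1))$ exponent loss still leaves a stretched-exponential saving, which forces $k\le(\log X)^{1/2-\epsilon}$. A further technical point is that $S_N$ is normalized by $|\mathcal A_N|$ rather than $\kappa_P N$; the asymptotic $|\mathcal A_N|\sim\kappa_P N$ (with the square-free density in the Rademacher case) makes this harmless.
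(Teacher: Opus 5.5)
Your proposal is correct and follows essentially the paper's argument. You apply the unconditional part of Corollary \ref{cor:normalcomparison} at the scales $N_l=\lambda^lX$, where the nested-scale term $k\lambda^{-1/2}$ dominates and the $1/(5(k+1))$ exponent loss still leaves a stretched-exponential saving for small $\varepsilon_0$, and then bound the probability that $k$ independent Gaussians all stay small. The only cosmetic difference is that you compute $\Phi(t+1)^k$ directly with threshold $\sqrt{\log k}$, whereas the paper invokes Lemma \ref{lemma:normalsBigLemma} with zero correlations at threshold $\sqrt{(2-\delta)\log k}$.
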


In light of Lemma \ref{lemma:asLbGeneral}, this suffices to establish the almost sure lower bound in Theorem \ref{thm:asboundsPoly}. Henceforth we prove Theorem \ref{thm:localizedBoundPoly}. Applying Corollary \ref{cor:normalcomparison} we obtain the following normal comparison result.

\begin{cor}\label{cor:MCLTpoly}
    Let $X=(S_{N_1}, S_{N_2}, \ldots, S_{N_k})$ (recall \eqref{eq:S_NdefPoly} for the definition of $S_N$), and let $Y$ be a standard multivariate Gaussian random vector with independent coordinates. Then 
\begin{equation*}
\mathbb P\left(\max_{1\leq l\leq k} S_{N_l} \leq t\right)\leq \mathbb P\left(\max_{1\leq l\leq k} Y_l\leq t+\eta\right) + O\left( \eta^{-1}\exp\left(-\frac{1}{11}(\log X)^{\frac{1}{3}} \right)\right).
 \end{equation*}
\end{cor}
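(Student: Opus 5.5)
The plan is to specialize the first (unconditional) part of Corollary \ref{cor:normalcomparison} to the scales $N_l=\lambda^l X$ with $\lambda=\exp(\sqrt{\log X})$ and $k=(\log X)^{\varepsilon_0}$. We insert the values of $\varepsilon_1,\varepsilon_1',\varepsilon_2$ obtained in the verification of (I), (I'), (II), (III) above, and then fix $\varepsilon_0$ small enough that the resulting error beats $\exp(-\frac1{11}(\log X)^{1/3})$. Since the sets $\mathcal A_N$ are nested, (III$_a$) holds. So we are in the case of Theorem \ref{thm:MCLT} and Corollary \ref{cor:normalcomparison} where the term $k\max_{l_1<l_2}\sqrt{\lvert\mathcal A_{N_{l_1}}\rvert/\lvert\mathcal A_{N_{l_2}}\rvert}$ is present. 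The comparison vector $Y$ is a standard Gaussian vector with independent coordinates, exactly as in the statement.

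First I bound the quantity inside the $\frac{1}{5(k+1)}$-th power.
\begin{itemize}
\item From (I) and (I') we have $\varepsilon_1=\varepsilon_1'\ll X^{-\delta_P'}$, since $\lambda^k=\exp((\log X)^{1/2+\varepsilon_0})\ll_\epsilon X^{\epsilon}$ once $\varepsilon_0<1/2$.
\item From (II) we have $\varepsilon_2\ll\exp(-\sqrt{\log X\log\log X})$.
\item Because $\lvert\mathcal A_N\rvert\sim\kappa_P N$, for $l_1<l_2$ we get $\lvert\mathcal A_{N_{l_1}}\rvert/\lvert\mathcal A_{N_{l_2}}\rvert\ll\lambda^{-(l_2-l_1)}\le\lambda^{-1}$. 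The ratio term is therefore $\ll k\exp(-\frac12\sqrt{\log X})$.
\end{itemize}
Since $k^2$ is only a power of $\log X$, the whole bracket is $\ll \exp(-\frac13\sqrt{\log X})$ for large $X$. The ratio term is the bottleneck; the $\varepsilon_i$ terms are much smaller.

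Raising to the power $\frac1{5(k+1)}$ gives at most $\exp\bigl(-\frac{\sqrt{\log X}}{15(k+1)}\bigr)\le \exp\bigl(-\frac1{30}(\log X)^{1/2-\varepsilon_0}\bigr)$. Multiplying by the prefactor $k^{3/2}=(\log X)^{3\varepsilon_0/2}$ costs only a factor $\exp(O(\log\log X))$. Choosing $\varepsilon_0=1/10$, say, the error becomes
\begin{equation*}
\ll \eta^{-1}\exp\Bigl(-\tfrac{1}{31}(\log X)^{2/5}\Bigr)\le \eta^{-1}\exp\Bigl(-\tfrac1{11}(\log X)^{1/3}\Bigr)
\end{equation*}
for $X$ large, because $(\log X)^{2/5}/31$ eventually dominates $(\log X)^{1/3}/11$. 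Any $\varepsilon_0<1/6$ works in the same way; the exponent $\frac13$ in the statement leaves room for this.

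The only step needing care is ensuring that the exponent loss $\frac{1}{5(k+1)}$ coming from Proposition \ref{prop:multivarCLT} does not destroy the saving. This works because the saving $\sqrt{\log X}$ is polynomially larger than $k$, which is exactly why $\lambda$ was taken as large as $\exp(\sqrt{\log X})$ and $k$ as a small power of $\log X$. A minor point is checking that the asymptotic $\lvert\mathcal A_N\rvert\sim\kappa_P N$ is uniform over the $k$ scales, which holds since all $N_l\ge X$.
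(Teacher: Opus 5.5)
Your proposal is correct and follows the paper's own proof. The paper likewise substitutes $\varepsilon_1=\varepsilon_1'\ll X^{-\delta_P'}$, $\varepsilon_2\ll\exp(-\sqrt{\log X\log\log X})$ and the ratio term $k\lambda^{-1/2}=(\log X)^{\varepsilon_0}\exp(-\frac12\sqrt{\log X})$ into the unconditional part of Corollary \ref{cor:normalcomparison}, then takes $\varepsilon_0$ small. Your bookkeeping (the ratio term is the bottleneck, and any $\varepsilon_0<1/6$ works) just makes explicit the step the paper leaves to ``upon inspection''.
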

\begin{proof}
    Recalling that $k=(\log X)^{\varepsilon_0}$ and $\lambda = \exp(\sqrt{\log X})$, Corollary \ref{cor:normalcomparison} furnishes the bound 
    \begin{multline*}
        \frac{(\log X)^{\frac{3\varepsilon_0}{2}}} {\eta}\Big((\log X)^{2\varepsilon_0}\exp\left(-\frac{1}{5}\sqrt{\log X\log\log X} \right)  + X^{-\delta_P}(\log X)^{2\varepsilon_0} \\ + (\log X)^{\varepsilon_0}\exp\left(-\frac{1}{2}\sqrt{\log X}\right)  \Big)^{\frac1{5(\log X)^{\varepsilon_0}}},
    \end{multline*}
    which upon inspection is less than the claimed bound if $\varepsilon_0>0$ is sufficiently small. 
\end{proof}

Now the probability $\mathbb P\left(\max_{1\leq l\leq k} Y_l\ll\sqrt{\log k}\right)$ can be suitably estimated using Lemma \ref{lemma:normalsBigLemma}. Recall that $\mathbb E Y_{l_1}Y_{l_2}= 0$ for $i\neq j$ since the components of $Y$ are independent, so we have 
\begin{equation*}
    \mathbb P\left(\max_{1\leq l\leq k} Y_l\leq\sqrt{(2-\delta)\log k}\right)\leq \exp\left(-\Theta\left(\frac{k^{\delta/20}}{\sqrt{\log k}}\right)\right).
\end{equation*}
Recalling that $k=(\log X)^{\varepsilon_0}$ and taking $\eta>0$ and $\delta>0$ fixed, we obtain 
\begin{equation*}
    \mathbb P\left(\max_{1\leq i\leq k} S_{N_l} \gg \sqrt{\log\log X}\right)  \geq 1 - O\left(\exp\left(-(\log X)^{O(\varepsilon_0)}\right)\right)
\end{equation*}
for a suitably small implied constant (making $\varepsilon_0>0$ smaller if necessary). Theorem \ref{thm:localizedBoundPoly} follows by noticing that $X\leq \lambda X\leq N_l\leq \lambda^kX\leq \exp\left((\log X)^{\frac{1}{2}+\varepsilon_0}\right)X\leq X^2$ for all $1\leq l\leq k$.

\section{Proof of Theorem \ref{thm:asBoundsShort}: Short intervals} Here we will require the more sophisticated part of Corollary \ref{cor:normalcomparison}. Let $H$ be as in the statement of Theorem \ref{thm:asBoundsShort} and consider $\mathcal A_N = \{n\in\mathbb N : N-H<n\leq N\}$. Note that $\lvert\mathcal A_N\rvert\sim \kappa H$ for some\footnote{In the Steinhaus case we have $\kappa = 1$, whereas in the Rademacher case $\kappa = \frac{6}{\pi^2}$.} $\kappa>0$. Recall 
\begin{equation*}
    S_N = \frac{1}{\sqrt{\lvert\mathcal A_N\rvert}}\sum_{n\in\mathcal A_N} \mathfrak Tf(n)\sim \frac{1}{\sqrt{\kappa H}}\sum_{N-H<n\leq n }\mathfrak Tf(n).
\end{equation*}
Moreover, let $X$ be a large parameter. For some small $\delta>0$ depending on the function $H$ and then even smaller $\varepsilon_0>0$, find $k= (X/H(X))^{\varepsilon_0}$ primes $l$ in the range $h(X)/2<l\leq h(X)$, where $h(X) := (X/H(X))^{\delta}$. We will not fix $\delta$ or $\varepsilon_0$ for now, as there will be various (but finitely many) places where we may have make them smaller. For these values of $l$, let $N_l = lX$ with corresponding $H_l = H(N_l)$. 
Observe that for large enough $X$ and small enough $\delta>0$, the sets $\mathcal A_{N_l}$ are disjoint. It is not these sets that we will apply Theorem \ref{thm: complicatedapproxJointG} to. If we did this, the very smooth integers in the sets $\mathcal A_{N_l}$ would prevent us from obtaining a suitably small value for $\varepsilon_2$. We bypass this by splitting $S_{N_l}$ as 
\begin{equation}\label{eq:splitting1}
    \sum_{N_l - H_l<n\leq N_l}f(n) = \sum_{(Xh(X))^{\frac{2}{3}}< p}f(p)\sum_{\substack{N_l-H_l < n\leq N_l \\ p\mid n}}f\left(\frac np\right) + \sum_{\substack{N_l-H_l<n\leq N_l \\ P^+(n)\leq (Xh(X))^{\frac{2}{3}}}}f(n).
\end{equation}
If $H_1> N_1/(\log N_1)^2$ (hence since $H$ is increasing we have $H_l \gg Xh(X)/(\log X)^2$ for all values of $l$), we choose some $\varepsilon>0$ to be specified later and additionally split the first part as 
\begin{equation}\label{eq:splitting2}
    \sum_{N_l - H_l<n\leq N_l}f(n) = \sum_{(Xh(X))^{\frac{2}{3}}< p }f(p)\sum_{\substack{N_l-H_l < n\leq N_l \\ p\mid n \\ \Omega(n)\leq (1+\varepsilon)\log\log X}}f\left(\frac np\right) \\ + \sum_{\substack{N_l-H_l<n\leq N_l \\  P^+(n)\leq (Xh(X))^{\frac{2}{3}} \\ \text{or } \Omega(n)> (1+\varepsilon)\log\log X }}f(n).
\end{equation}
We first show that the sum over integers with $\Omega(n)> (1+\varepsilon)\log\log X$ above may be typically ignored for all values of $l$. 

\begin{lemma}\label{lemma:ignoreTooManyPrimesShort}
Assume that $H_1 > N_1/(\log N_1)^2$ and let $\varepsilon>0$. There exists $\varepsilon' = \varepsilon'(\varepsilon)$ such that with probability $1-O((\log X)^{-\varepsilon'+2\varepsilon_0})$ we have 
\begin{equation*}
    \frac{1}{\sqrt{H_l}}\sum_{\substack{N_l-H_l<n\leq N_l \\ \Omega(n)>(1+\varepsilon)\log\log X}} f(n)\ll1
\end{equation*} for all of the $k$ chosen values of $h(X)/2<l\leq h(X).$\end{lemma}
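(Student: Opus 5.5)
The plan is a second-moment (Chebyshev) bound for each fixed $l$, followed by a union bound over the $k=(X/H(X))^{\varepsilon_0}$ chosen values of $l$. Fix one of the chosen primes $l$ and set
\begin{equation*}
T_l:=\frac{1}{\sqrt{H_l}}\sum_{\substack{N_l-H_l<n\leq N_l \\ \Omega(n)>(1+\varepsilon)\log\log X}} f(n).
\end{equation*}
By orthogonality, $\mathbb E f(n)\overline{f(m)}=1_{n=m}$ on the support of $f$. Hence
\begin{equation*}
\mathbb E\lvert T_l\rvert^2 \leq \frac{1}{H_l}\#\{N_l-H_l<n\leq N_l : \Omega(n)>(1+\varepsilon)\log\log X\}.
\end{equation*}
We have $\log\log N_l=\log\log X+o(1)$, since $N_l\leq Xh(X)\leq X^{1+\delta}$. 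So the threshold $(1+\varepsilon)\log\log X$ differs from $(1+\varepsilon)\log\log N_l$ by only $O(1)$. This can be absorbed, for instance by applying Lemma \ref{lemma:tooManyPrimes} with $\varepsilon$ replaced by $\varepsilon/2$ for $X$ large.

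The hypothesis of Lemma \ref{lemma:tooManyPrimes} requires $H_l\geq N_l^{5\varepsilon}$. This holds comfortably: since $H$ is increasing and $H_1>N_1/(\log N_1)^2$, we get $H_l\gg N_l/(\log N_l)^{O(1)}$ (using $N_l\asymp h(X) X$ and $l\leq h(X)$, with $h(X)\leq(\log X)^{O(\delta)}$ in this regime). Lemma \ref{lemma:tooManyPrimes} then gives $\mathbb E\lvert T_l\rvert^2\ll(\log X)^{-\varepsilon'}$ for some $\varepsilon'=\varepsilon'(\varepsilon)>0$. Chebyshev's inequality yields $\mathbb P(\lvert T_l\rvert>1)\ll(\log X)^{-\varepsilon'}$.

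It remains to take the union bound over the $k$ values of $l$. In the regime $H_1>N_1/(\log N_1)^2$ we have $X/H(X)\ll(\log X)^{2+o(1)}$, because $H$ is increasing and concave, so $H(X)\geq H(N_1)X/N_1$ roughly, up to the factor $l$. This gives $k=(X/H(X))^{\varepsilon_0}\ll(\log X)^{2\varepsilon_0}$. Summing the failure probabilities therefore gives $O((\log X)^{-\varepsilon'+2\varepsilon_0})$, as claimed.

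The main obstacle is bookkeeping rather than anything deep. One point is verifying $X/H(X)\ll(\log X)^2$, so that $k\leq(\log X)^{2\varepsilon_0}$. This uses that $H(x)/x$ is nonincreasing for concave $H$ with $H(0)\geq 0$, so $H(X)/X\geq H(N_1)/N_1>(\log N_1)^{-2}$. The other is the harmless shift between $\log\log X$ and $\log\log N_l$ in the threshold on $\Omega(n)$.
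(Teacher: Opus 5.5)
Your proposal is correct and follows the paper's own proof: a second-moment Chebyshev bound for each fixed $l$, Lemma~\ref{lemma:tooManyPrimes} to count the $n$ with too many prime factors, and a union bound over the $k\ll(\log X)^{2\varepsilon_0}$ chosen scales. Your extra bookkeeping fills in two points the paper leaves implicit. First, you shift the threshold from $(1+\varepsilon)\log\log X$ to $(1+\varepsilon/2)\log\log N_l$. Second, you use the concavity of $H$, not just monotonicity, to get $X/H(X)\ll(\log X)^2$.
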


\begin{proof}
    The probability that $$\left\lvert\sum_{\substack{N_l - H_l < n\leq N_l \\ \Omega(n)>(1+\varepsilon)\log\log X}} f(n)\right\rvert\geq\lambda$$ is bounded by $\lambda^{-2}$ times the number of $n\in (N_l - H_l, N_l]$ with $\Omega(n)>(1+\varepsilon)\log\log X$ by Markov's inequality using a second moment estimate. The number of such $n$ is $\ll H_l/(\log X)^{\varepsilon'}$ by Lemma \ref{lemma:tooManyPrimes}. Taking a union bound over $k$ values of  $l$ and recalling that $k=(X/H(X))^{\varepsilon_0}\ll (\log X)^{2\varepsilon_0}$ (making $\varepsilon_0>0$ smaller if necessary) gives a probability $1-O((\log X)^{-\varepsilon'+2\varepsilon_0})$ that all of the sums are bounded. 
\end{proof}

If $H_1\leq N_1/(\log N_1)^2$ (in which case by concavity of the function $H$ we have $H_l$  \linebreak$\ll Xh(X)/(\log X)^2$ for all values of $l$), let $\mathcal B_{N_l}$ be the subset of $\mathcal A_{N_l}$ of those integers $n$ with $P^+(n)>(Xh(X))^{\frac{2}{3}}$. If $H_1> N_1/(\log N_1)^2$, add the restriction $\Omega(n)\leq(1+\varepsilon)\log\log X$. Now we work with 
\begin{equation*}
    \sum_{n\in\mathcal B_{N_l}}f(n) + O\left(\left\lvert\sum_{\substack{N_l-H_l<n\leq N_l \\  P^+(n)\leq (Xh(X))^{\frac{2}{3}}}}f(n)\right\rvert\right).
\end{equation*}
We further show that for many values of $l$, the smooth sum is typically small. 

\begin{lemma}\label{lemma:ignoresmoothShort}
    For large enough $X$, with probability $1-O((\log \log X)^{-\frac{2}{3}})$ over $f(p)$ with $p\leq (Xh(X))^{\frac{2}{3}}$, there exist $\geq \frac{k}{2}$ values of $l$ such that 
    \begin{equation*}
        \frac{1}{\sqrt H_l}\sum_{\substack{N_l-H_l<n\leq N_l \\ P^+(n)\leq (Xh(X))^{\frac{2}{3}}}}f(n)\ll (\log\log X)^{\frac{1}{3}}.    \end{equation*}
\end{lemma}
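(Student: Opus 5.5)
We will prove the lemma by a second moment computation, averaged over the $k$ chosen values of $l$, followed by Markov's inequality. Write $y = (Xh(X))^{\frac23}$ and
\begin{equation*}
T_l = \frac{1}{\sqrt{H_l}}\sum_{\substack{N_l-H_l<n\leq N_l \\ P^+(n)\leq y}} f(n).
\end{equation*}
Each $T_l$ depends only on the $f(p)$ with $p\leq y$.

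The plan is to show
\begin{equation*}
\mathbb E\sum_{l}\lvert T_l\rvert^2 = \sum_l \frac{1}{H_l}\#\{N_l-H_l<n\leq N_l : P^+(n)\leq y\}\ll k.
\end{equation*}
The equality is orthogonality: $\mathbb E f(n)\overline{f(m)} = 1_{n=m}$, restricted to square-free $n$ in the Rademacher case. The bound then needs only the trivial estimate that the number of $y$-smooth integers in $(N_l-H_l,N_l]$ is at most $H_l$, since every such interval contains at most $H_l$ integers.

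Given this, Markov's inequality yields
\begin{equation*}
\mathbb P\Big(\sum_l \lvert T_l\rvert^2 \geq k(\log\log X)^{\frac23}\Big)\ll (\log\log X)^{-\frac23}.
\end{equation*}
On the complementary event, the number of $l$ with $\lvert T_l\rvert^2 > 2(\log\log X)^{\frac23}$ is at most $k/2$. Hence at least $k/2$ values of $l$ satisfy $\lvert T_l\rvert\ll(\log\log X)^{\frac13}$, which is exactly the claim. This holds with the stated probability over the $f(p)$, $p\leq y$.

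The main point to check is that nothing subtler is needed. The smooth part could in principle carry almost all of the interval, since $y=(Xh(X))^{2/3}$ is a power of $N_l$, and then $T_l$ has variance of order $1$ rather than small. The lemma does not ask for smallness, however: a loss of $(\log\log X)^{1/3}$ is allowed, and the averaging over $l$ makes the crude variance bound $\ll 1$ per scale sufficient.

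One should also confirm two bookkeeping points. First, the normalisation by $\sqrt{H_l}$ rather than $\sqrt{\lvert\mathcal A_{N_l}\rvert}$ only changes the variance bound by the constant factor $\kappa$. Second, in the Steinhaus case one applies the argument to $\mathfrak T f$, or simply to $\lvert T_l\rvert$ directly. Neither point affects the argument.
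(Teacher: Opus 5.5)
Your proof is correct and follows essentially the paper's argument: a per-scale second-moment bound $\mathbb E\lvert T_l\rvert^2\ll 1$, followed by Markov's inequality averaged over the $k$ scales with threshold $(\log\log X)^{1/3}$. The only difference is cosmetic. The paper applies Chebyshev to each $l$ and then Markov to the number of exceptional $l$, whereas you apply Markov once to $\sum_l\lvert T_l\rvert^2$; both give the same $O((\log\log X)^{-2/3})$ failure probability.
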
 

\begin{proof}
 The probability that the normalized smooth sum is greater than $\lambda$ is bounded by $1/\lambda^2$ by Markov's inequality using a second moment estimate. We choose  $\lambda = (\log\log X)^{1/3}$ so that the expected number of $l$ with the normalized smooth sum greater than $\lambda$ is $\leq k/\lambda^2$. Then for large enough $X$, with probability $1-O((\log \log X)^{-\frac{2}{3}})$ we may find $\geq \frac{k}{2}$ suitable values of $l$. 
\end{proof}

Henceforth condition on the values of $f(p)$ for $p\leq (Xh(X))^{\frac{2}{3}}$, upon which we may find $\geq \frac{k}{2}$ fixed values of $l$ that we will refer to as \emph{good}, with the property that
\begin{equation*}
    \sum_{N_l - H_l < n\leq N_l}f(n) = \sum_{n\in\mathcal B_{N_l}}f(n) + \sum_{\substack{N_l-H_l<n\leq N_l \\ \Omega(n)>(1+\varepsilon)\log\log X}} f(n) + O\left(\sqrt{H_l}(\log \log X)^{\frac{1}{3}} \right)
\end{equation*} for all good $l$. (The second sum appears only when $H_1 > N_1/(\log N_1)^2$.)

\subsection{Almost sure lower bound}
Throughout this section, all values of $l$ will be good even if we do not state so. As before, we will actually prove something slightly stronger.
\begin{theorem}\label{thm:localizedBoundShort}
    Let $f$ be a Rademacher (or Steinhaus) random multiplicative function and let $H$ be a function as in Theorem \ref{thm:asBoundsShort}. For any large enough $X$, we have that 
    \begin{equation*}
        \max_{X<N\leq X^2} \frac{1}{\sqrt H} \left\lvert\sum_{N-H<n\leq N}f(n) \right\rvert \gg \sqrt{\log \frac{X}{H(X)}}
    \end{equation*} holds with a suitably small implied constant with probability $1-O((\log \log X)^{-\frac{2}{3}})$. 
\end{theorem}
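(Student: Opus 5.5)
We apply the conditional part of Corollary \ref{cor:normalcomparison} to the sets $\mathcal B_{N_l}$, with $l$ running over the good values. Condition on the values of $f(p)$ for $p\leq P:=(Xh(X))^{2/3}$. By Lemmas \ref{lemma:ignoreTooManyPrimesShort} and \ref{lemma:ignoresmoothShort}, off an event of probability $O((\log\log X)^{-2/3})$ there are $k'\geq k/2$ good indices $l$. For each of them,
\begin{equation*}
\sum_{N_l-H_l<n\leq N_l} f(n) = \sum_{n\in \mathcal B_{N_l}} f(n) + O\big(\sqrt{H_l}(\log\log X)^{1/3}\big).
\end{equation*}
It therefore suffices to show that, with high conditional probability, $\max_l |\mathcal B_{N_l}|^{-1/2}\sum_{n\in\mathcal B_{N_l}}\mathfrak Tf(n)\geq c\sqrt{\log(X/H(X))}$. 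Since $\log k' \asymp \varepsilon_0\log(X/H(X))$, this target dominates the $(\log\log X)^{1/3}$ error.

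\textbf{Checking the hypotheses.}
\begin{itemize}
\item Condition (IV) holds because $n\leq N_l\leq Xh(X)$ and $P^+(n)>(Xh(X))^{2/3}$, so the large prime divides $n$ exactly once and is unique.
\item Condition (III$_b$) holds because the intervals are disjoint.
\item Condition (II): $P^+(n_1)=P^+(n_2)=p>P$ forces $n_i=pm_i$ with $m_i\leq Xh(X)/p$. Summing $(H/p+1)^2$ over $p>P$ gives $\varepsilon_2\ll X^{-c}$, using $H\geq N^{11/15}$ and Lemma \ref{lemma:unSmoothSquarefree} to get $|\mathcal B_{N_l}|\gg H_l$.
\item Condition (I'): all four $n_i$ share the same large prime $p$. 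Removing it leaves the equation $m_1m_2m_3m_4=\square$ (or $m_1m_4=m_2m_3$) with $m_i$ in intervals of length about $H/p$ near $l_iX/p$. Non-trivial solutions are again bounded by a power saving in $X$, via a divisor-type count.
\end{itemize}
Since everything saves a power of $X$ while $k$ is only $(X/H)^{\varepsilon_0}$, the error in Corollary \ref{cor:normalcomparison}, namely $k^{3/2}\eta^{-1}(k\sqrt{\varepsilon_1'+\varepsilon_2})^{1/(10(k+1))}$, needs care. For $H=N/(\log N)^c$ we have $k$ a small power of $\log X$, and a power saving of $X$ raised to the power $1/k$ is still $\exp(-(\log X)^{1-O(\varepsilon_0)})$, which is tiny. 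For $H=N^\alpha$, however, $k=X^{(1-\alpha)\varepsilon_0}$ is a power of $X$. The exponent $1/(10(k+1))$ then destroys the saving, so $k$ would have to be taken logarithmic, losing the $\sqrt{\log(X/H)}$. This is the first point to verify. If it fails, I would instead use Harper's Normal Approximation Result 1 (mentioned in the remark after Theorem \ref{thm:MCLT}), whose dependence on $k$ is polynomial, since conditionally the sum is a weighted sum of independent $f(p)$.

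\textbf{Covariances (the main obstacle).} We must control the conditional covariances $\widetilde{\mathbb E}S_{N_{l_1}}S_{N_{l_2}}$ of the Gaussian vector $Y'$, and bring them to within $\epsilon$ of the identity, so that Lemma \ref{lemma:normalsBigLemma} gives $\mathbb P(\max Y'_l\leq\sqrt{(2-\delta)\log k'})$ small. The diagonal equals
\begin{equation*}
|\mathcal B_{N_l}|^{-1}\sum_{p>P}\Big|\sum_{m\in(N_l-H_l,N_l]/p}f(m)\Big|^2 .
\end{equation*}
Its expectation is $1$, and its variance is controlled by condition (I)-type counts. The off-diagonal terms $l_1\neq l_2$ count pairs $pm_1\in\mathcal B_{N_{l_1}}$, $pm_2\in\mathcal B_{N_{l_2}}$, weighted by $f(m_1)\overline{f(m_2)}$. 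Their expectation involves $m_1=m_2$, which is impossible since $l_1X\neq l_2X$ are far apart relative to $H$, while the fluctuations come from $m_1m_2m_3m_4=\square$ across scales. Here the choice of distinct primes $l\in(h/2,h]$ matters: it prevents scaling solutions such as $m_3=(l_1/l_2)m_1$ from occurring, since $l_1/l_2$ is not the ratio of integers in the available range. I expect this covariance count, done via second moments and Markov, to be the main obstacle, especially in the range $H>N/(\log N)^2$. There, as in Soundararajan--Xu, we need the restriction $\Omega(n)\leq(1+\varepsilon)\log\log X$ so that the count gives a saving $(\log X)^{-\varepsilon''}$ with $c>2\log2-1$. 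The saving only needs to beat $k^2$, which is a small power of $\log X$ once $\varepsilon_0$ is small.

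\textbf{Conclusion.} Normalising by the variances, Lemma \ref{lemma:normalsBigLemma} makes the Gaussian maximum small only with probability $\exp(-k^{\Theta(\delta)})$. Combined with Corollary \ref{cor:normalcomparison} (with fixed $\eta$), and taking expectation over the conditioning, the total failure probability is $O((\log\log X)^{-2/3})$. This failure probability is dominated by Lemma \ref{lemma:ignoresmoothShort}, and the result follows since every $N_l\in(X,X^2]$.
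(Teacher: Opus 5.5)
Apart from one step, your outline is the paper's proof. It uses the same conditioning on $f(p)$ for $p\le (Xh(X))^{2/3}$, the same $\ge k/2$ good primes $l$, the same sets $\mathcal B_{N_l}$, and the same checks of (I'), (II), (III$_b$), (IV). Covariances are controlled the same way, by Markov's inequality plus a two-scale count of the fourth moment equation (Proposition \ref{prop:CountAt4Scales}, which you describe but do not carry out). The last step is the same Lemma \ref{lemma:normalsBigLemma}.

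The difference is the Gaussian comparison. There your worry is justified, and the fallback is necessary, not optional. The paper inserts $k=(X/H(X))^{\varepsilon_0}$ into the error $k^{3/2}\eta^{-1}\bigl(k\sqrt{\varepsilon_1'+\varepsilon_2}\bigr)^{1/(5(k+1))}$ of \eqref{eq:shortIntsNormal}. Since $\varepsilon_1'+\varepsilon_2\approx X^{-2/15}$, this is roughly $\eta^{-1}k^{3/2}X^{-1/(75k)}$. That is $o(1)$ only if $k\log k=o(\log X)$, so the argument as written only handles $H(N)\ge N/(\log N)^{A}$ for fixed $A$. For $H=N^\alpha$ or $H=N/\exp(\sqrt{\log N})$ the error exceeds $1$. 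Taking fewer scales does not help, since $\sqrt{\log k}$ would then fall short of $\sqrt{\log(X/H(X))}$. The loss is intrinsic to the exponent $2/(kq+2)$ in Proposition \ref{prop:multivarCLT}.

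Harper's Normal Approximation Result 1 \cite{Harper2023_ASLF}, which the paper itself mentions in the remark after Theorem \ref{thm:MCLT}, avoids this. Conditionally, $\widetilde S_{N_l}=\mathfrak T\sum_{p>P}c_l(p)f(p)$ with fixed coefficients $c_l(p)=|\mathcal B_{N_l}|^{-1/2}\sum_{pm\in\mathcal B_{N_l}}f(m)$. The error terms of that result are polynomial in $k$ and are controlled by $\sum_p|c_{l_1}(p)|^2|c_{l_2}(p)|^2$ and $\sum_p\bigl(\sum_l|c_l(p)|\bigr)^3$. Their expectations over the small primes are bounded by the same (I') and (II) counts, so a power saving in $X$ survives even when $k$ is a power of $X$.

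So the two routes trade off as follows. The paper's route uses only the martingale structure and might extend beyond (IV), but its exponent $1/(k+1)$ confines it to $k\ll\log X/\log\log X$. Yours uses the conditional independence supplied by (IV) and covers the full range of $H$ in the theorem.

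Three points in your sketch need adjusting.
\begin{itemize}
\item Taking the $l$ prime does not prevent scaling solutions. With $n_1=ga$, $n_2=gb$, $n_3=ha$, $n_4=hb$, the choice $a=l_1$, $b=l_2$ gives genuine solutions. Primality of the distinct $l_i$ only makes these the unique ones with $l_2a=l_1b$; they number $\ll H_{l_1}H_{l_2}/(l_1l_2)$, while all others have $l_2a,l_1b\gg N_{l}/H_{l}$.
\item Consequently the covariance saving is $h(X)^{-2}=(X/H(X))^{-2\delta}$. It must beat the union bound over $k^2=(X/H(X))^{2\varepsilon_0}$ pairs, which forces $\varepsilon_0<\delta$. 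Your remark that $k^2$ is a small power of $\log X$ holds only when $H>N/(\log N)^2$.
\item Lemma \ref{lemma:normalsBigLemma} also contributes a term $k^{-\delta^2/(50\epsilon)}$ alongside $\exp(-k^{\Theta(\delta)})$. This is harmless, because $k\ge(\log X)^{c\varepsilon_0}$.
\end{itemize}
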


In light of Lemma \ref{lemma:asLbGeneral}, this suffices to establish Theorem \ref{thm:asBoundsShort}. Henceforth we prove Theorem \ref{thm:localizedBoundShort}. 

Let
\begin{equation*}
    \widetilde S_{N_l} = \frac{1}{\sqrt{\lvert \mathcal B_{N_l}\rvert}}\sum_{n\in\mathcal B_{N_l}} \mathfrak Tf(n).
\end{equation*}
We wish to apply Corollary \ref{cor:normalcomparison} to the $\mathcal B_{N_l}$, so we will check conditions (I'), (II), (III), and (IV) for these sets.

\subsubsection*{(I')}  If $n_i\in\mathcal B_{N_{l_i}}$ are such that $n_1n_2n_3n_4 = \square$ (or $n_1n_4 = n_2n_3$) with $P^+(n_1) = \cdots = P^+(n_4)=p > (Xh(X))^{\frac{2}{3}}$, then $n_i' = n_i/p$ satisfy $n_1'n_2'n_
  3'n_4'=\square$ (or $n_1'n_2' = n_3'n_4'$) and $n_i'\leq N_{l_i}/p\leq Xh(X)/p$. For a fixed $p$, the number of such $n_i'$ is $\ll (Xh(X)\log X/p)^2$. Over all primes $p>(Xh(X))^{\frac{2}{3}}$, we have at most 
\begin{equation*}
    \ll (Xh(X)\log X)^2 \sum_{(Xh(X))^{\frac{2}{3}}<p\leq kX}\frac{1}{p^2}\ll (Xh(X))^{\frac{4}{3}}\log X
\end{equation*}
solutions for $n_i$. For  $\varepsilon_1' = X^{-(\frac{2}{3} - \frac{8}{15})}\log X$, this is $\ll \varepsilon_1' H_1^2\ll \varepsilon_1'\sqrt{H_{l_1}\cdots H_{l_4}}$ uniformly in the values of the $l_i$, since $H\geq N^{\frac{11}{15}}.$

\subsubsection*{(II)} The number of $n_1\in\mathcal B_{N_{l_1}}, n_2\in\mathcal B_{N_{l_2}}$ with $P^+m=P^+(n)$ is bounded by 
\begin{multline*}
    \sum_{(Xh(X))^{\frac{2}{3}}<p} \#\{n_1\in\mathcal B_{N_{l_1}} : P^+(n_1) = p \} \cdot \#\{n_2\in\mathcal B_{N_{l_2}} : P^+(n_2) = p \}  \\ \ll \left(\frac{H_{l_1}}{(Xh(X))^{\frac{2}{3}}} + 1\right)\sum_{(Xh(X))^{\frac{2}{3}}<p} \#\{n_2\in\mathcal B_{N_{l_2}} : P^+(n_2) = p \} \\  \ll \left(\frac{H_{l_1}}{(Xh(X))^{\frac{2}{3}}} + 1 \right)H_{l_2}.
\end{multline*}
For $\varepsilon_2 = (Xh(X))^{-\frac{2}{3}}$, this is certainly $\ll \varepsilon_2 H_{l_1}H_{l_2}$ uniformly in the values of $l_1$ and $l_2$. 

\subsubsection*{(III)} As we remarked in the beginning, the stronger assumption (III$_b$) is satisfied here. 

\subsubsection*{(IV)} We have chosen $P = (Xh(X))^{\frac{2}{3}}$ and the rest is true by definition of the $\mathcal B_{N_l}$. 

Now we apply Corollary \ref{cor:normalcomparison} to the $\widetilde{S}_{N_l}$ with good $l$, obtaining that with probability as in \eqref{eq:bigProb} over the primes $p\leq P$ (with the values of $\varepsilon_1', \varepsilon_2$ that we just obtained), we have
  \begin{equation}\label{eq:shortIntsNormal}
        \widetilde{\mathbb P}\left(\max_{l} \widetilde{S}_{N_l} \leq t\right)\leq \mathbb P\left(\max_{l} Y_l\leq t+\eta\right)   + O\left(k^{\frac{3}{2}}\eta^{-1} \left(k\sqrt{\varepsilon_1' + \varepsilon_2}\right)^{\frac1{5(k+1)}}\right),
    \end{equation}
    where the $Y_l$ are Gaussian random variables with the covariance structure 
\begin{equation*}\mathbb E Y_{l_1}Y_{l_2} = \widetilde{\mathbb E}\widetilde{S}_{N_{l_1}}\widetilde{S}_{N_{l_2}} = \frac{1}{\sqrt{\lvert\mathcal B_{N_{l_1}}\rvert\lvert\mathcal B_{N_{l_2}}\rvert}}\sum_{\substack{n_1\in\mathcal B_{N_{l_1}} \\ n_2\in\mathcal B_{N_{l_2}}\\ P^+(n_1) = P^+(n_2)}}\mathfrak Tf(n_1)\mathfrak Tf(n_2).\end{equation*} We would like to apply Lemma \ref{lemma:normalsBigLemma} to the $\widetilde{Y}_l = Y_l/\mathbb EY_l^2$, so we first have to check that there exists a sufficiently small $\epsilon>0$ such that $\mathbb E \widetilde{Y}_{l_1}\widetilde{Y}_{l_2}\leq \epsilon$ whenever $l_1\neq l_2$. The heart of the matter lies in the following proposition. 
\begin{prop}\label{prop:CountAt4Scales}
    Assume that $H$ is as in Theorem \ref{thm:asBoundsShort}. Let $l_1$ and $l_2$ be  primes in the range $h(X)/2 < l_1,  l_2\leq h(X)=(X/H(X))^{\delta}$. The number of non-trivial $n_1, n_3\in\mathcal B_{N_{l_1}}$ and $n_2, n_4\in\mathcal B_{N_{l_2}}$ with $P^+(n_1)=P^+(n_2)$ and $P^+(n_3)=P^+(n_4)$ such that $n_1n_2n_3n_4=\square$ (or $n_1n_4=n_2n_3$) is $\ll H_{l_1}H_{l_2}/h(X)^2$ if $\delta>0$ and $\varepsilon>0$ are small enough.\footnote{Recall the definition of $\mathcal B_{N_l}$ for the dependence on $\varepsilon$.}
\end{prop}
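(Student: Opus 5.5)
We reduce the problem to counting non-trivial solutions of a shorter equation. Write $n_1=pa$, $n_2=pb$, $n_3=qc$, $n_4=qd$ with $p=P^+(n_1)=P^+(n_2)$ and $q=P^+(n_3)=P^+(n_4)$, all greater than $(Xh(X))^{2/3}$. Since $n_i\le Xh(X)$, each $n_i$ has exactly one prime factor exceeding $(Xh(X))^{2/3}$. Then $n_1n_2n_3n_4=\square$ becomes $abcd=\square$, and $n_1n_4=n_2n_3$ becomes $ad=bc$. Here $a\in((N_{l_1}-H_{l_1})/p,N_{l_1}/p]$, $b\in((N_{l_2}-H_{l_2})/p,N_{l_2}/p]$, and similarly for $c,d$ with $q$. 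The ratio $N_{l_1}/N_{l_2}=l_1/l_2$ is a ratio of distinct primes close to $1$. Moreover $a/b\approx l_1/l_2$ up to relative error $O(H/X)$, and likewise for $c/d$.

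\textbf{Case $l_1=l_2$.} The statement is then essentially the fourth-moment bound of Soundararajan--Xu in a single short interval, restricted to $\mathcal B$. The solutions in this case give the non-trivial count of size $o(H^2)$ when $c>2\log2-1$. To reach the required saving $h(X)^{-2}=(X/H)^{-2\delta}$ we choose $\delta$ small.

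\textbf{Case $l_1\neq l_2$ (Steinhaus).} From $ad=bc$ write $a=gu$, $b=gv$ with $(u,v)=1$, so that $c=g'u$ and $d=g'v$. Since $u/v=a/b$ lies within relative distance $O(H/X)$ of $l_1/l_2\ne 1$, we have $u\neq v$.

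\textbf{Case $l_1\neq l_2$ (Rademacher).} Write $a=rs$ and $b=rt$ with $\gcd$ structure forcing $st\mid cd$ and a square condition. The pair $(s,t)$ then satisfies $|s/t-l_1/l_2|\ll H/X$. Either $s,t$ are both at least of size $X/H$, or the rational $s/t$ is exactly $l_1/l_2$ or a nearby fraction of height $\ll h$.

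\textbf{Counting.} Fix the ``shape'' $(u,v)$ and count $g,g',p,q$. The number of $g$ with $gu p$ in an interval of length $H_{l_1}$ is $\ll H/(up)+1$. Summing over $p,q$ and over admissible $(u,v)$ should give the bound $H_{l_1}H_{l_2}\sum_{(u,v)}1/(uv)$. The admissible pairs are those with $u/v$ in a window of width $H/X$ around $l_1/l_2$ and $u\ne v$. The main obstacle is this sum over admissible pairs. Admissibility forces $\max(u,v)\gg \min(l_1,l_2)\gg h$, because $u/v\ne l_1/l_2$ and Farey spacing forces $v\gg \min(h, (X/H)^{1/2})$, or else $u/v=l_1/l_2$ with $u\ge l_1$. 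So $1/(uv)$ summed over such pairs, with the counting of pairs in a thin sector, should give roughly $h^{-2}$ times logarithmic factors, times a factor $(X/H)^{O(\delta)}$-type loss from the density of the sector.

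\textbf{Difficulties.} The logarithmic losses, and the $+1$ terms when $up>H$, need care. For the $+1$ terms we use that $p,q>(Xh)^{2/3}$ and $H\ge N^{11/15}$, so these terms are absorbed with a power saving, much as in the verification of (II). For large $H$, close to $N/(\log N)^c$, the saving $h^{-2}$ is only a small power of $\log$. Here the $\log$ losses from divisor-type counts are controlled by the restriction $\Omega(n)\le(1+\varepsilon)\log\log X$. Following Soundararajan--Xu, we replace the divisor-sum bound by $2^{\Omega}\le(\log X)^{(1+\varepsilon)\log2}$. This leaves $(\log X)^{2\log2-1+O(\varepsilon)}$ against the available $(\log X)^{c}$. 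We then choose $\delta$ and $\varepsilon$ small enough that $h^2$ still fits inside the leftover saving $(\log X)^{c-(2\log2-1)-O(\varepsilon)}$.
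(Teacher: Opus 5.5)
The reduction to $ad=bc$ (resp.\ $abcd=\square$) and your Steinhaus parametrization $n_1=Gu$, $n_2=Gv$, $n_3=G'u$, $n_4=G'v$ agree with the paper. However, the step that actually produces the factor $h(X)^{-2}$ is not established.

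Write $\gamma\asymp H_l/N_l$ for the common relative length of the intervals. Each $u$ then has $\ll\gamma u+1$ partners $v$. The spacing you state, $v\gg\min(h,(X/H)^{1/2})$ when $u/v\neq l_1/l_2$, is too weak: with only $u\asymp v\gg h$, the best the sum $\sum 1/(uv)$ gives is $\gamma\log X+h^{-1}$. Your fallback, ``$h^{-2}$ times logarithmic factors times an $(X/H)^{O(\delta)}$ loss'', proves nothing. The target $h^{-2}=(X/H)^{-2\delta}$ is itself of that size, and for $H$ near $N/(\log N)^c$ the quantity $h$ is only a tiny power of $\log X$.

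The missing input is the sharp form of the spacing. If $l_2u\neq l_1v$ then $|l_2u-l_1v|\ge 1$, so $1/(l_1v)\le |l_2u/(l_1v)-1|\ll\gamma$, which forces $u\asymp v\gg(\gamma h)^{-1}$. These pairs contribute
\[
H_{l_1}H_{l_2}\sum_{u\gg(\gamma h)^{-1}}\frac{\gamma u+1}{u^{2}}\ll\gamma(\log X+h)\,H_{l_1}H_{l_2},
\]
which carries a factor $\gamma$. Coprimality leaves only the exact pair $(u,v)=(l_1,l_2)$, worth $H_{l_1}H_{l_2}/(l_1l_2)\asymp H_{l_1}H_{l_2}/h^2$ with no loss.

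Keeping these two pieces apart is also what makes the large-$H$ regime work. There you insert the weight $2^{2K-\Omega(n_1n_4)}\ge1$ with $K=(1+\varepsilon)\log\log X$. You then apply Nair--Tenenbaum to $G$ and $G'$, and to the $v$-sum once $l_2u\gg\gamma^{-2}$, treating smaller $u$ trivially. This must be done only on the non-exact pairs, where it turns $\gamma\log X$ into $\gamma h(\log X)^{2(1+\varepsilon)\log 2-1}\log\log X$. On the exact pair it would cost a factor $(\log X)^{2(1+\varepsilon)\log2-1}$, which in this regime exceeds $h^2$. Your numerology ``$(\log X)^{2\log2-1+O(\varepsilon)}$ against $(\log X)^c$'' only closes once the factor $\gamma$ has been secured in this way.

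The Rademacher case is not carried out. Writing $a=rs$, $b=rt$ with ``$st\mid cd$ and a square condition'' stops short of a parametrization you can count with. Your alternative, ``a nearby fraction of height $\ll h$'', cannot occur, since it would require $\gamma h^2\gg1$.

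The paper instead writes $n_1=Aru$, $n_2=Asv$, $n_3=Brv$, $n_4=Bsu$ with $r,s,u,v$ pairwise coprime. This yields two independent constraints:
\begin{itemize}
\item $|l_2r/(l_1s)-1|\ll\gamma$, from $n_1n_3/(n_2n_4)=r^2/s^2$;
\item $|u/v-1|\ll\gamma$, from $n_1n_4/(n_2n_3)=u^2/v^2$.
\end{itemize}
The cases are then handled as follows.
\begin{itemize}
\item If $r=s=1$ or $u=v=1$, the count reduces to the Steinhaus count.
\item The exact case $(r,s)=(l_1,l_2)$ reduces to counting $m_1m_3=m_2m_4$ with all $m_i$ in a single interval of length $\asymp H/h$ near $X$. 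This gives $\ll H_{l_1}H_{l_2}/h^2$.
\item Otherwise $l_2r,\,l_1s,\,u,\,v\gg\gamma^{-1}$. After Nair--Tenenbaum in $A$ and $B$, the remaining sum $\sum 2^{-\Omega(rusv)}/(r^2u^2)$ is bounded by the square of the Steinhaus sum over $(a,b)$.
\end{itemize}
None of this structure is present in your sketch.
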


\begin{proof}
Write $\gamma_l = H_l/N_l$ so that $H_l = \gamma_l N_l$. Notice that $$\gamma_{l_1}\asymp \gamma_{l_2}\asymp \gamma = \gamma(X)= H(Xh(X))/(Xh(X))$$ uniformly in the values of $l_1, l_2$, since $H$ is increasing and concave. Without loss of generality, we may assume that $l_1\leq  l_2$. 

We begin with the Steinhaus case. We utilize the parametization of the solutions to $n_1n_4=n_2n_3$ as 
\begin{equation*}
    n_1= ga, n_2 = gb, n_3 =ha, n_4 = hb,
\end{equation*} where $(a, b)=1$ and $g = (n_1, n_2), h=(n_3, n_4).$ In particular, $P^+(n_1)\mid g$ and $P^+(n_3) \mid h$, so both $g, h > (Xh(X))^{\frac{2}{3}}.$ 
Since we are concerned with non-trivial solutions, we may assume that $g\neq h$ and $a\neq b$.  
Note that 
\begin{equation*}
(1-\gamma_{l_1})^2 = \frac{l_2^2N_{l_1}^2(1-\gamma_{l_1})^2}{l_1^2N_{l_2}^2}\leq\frac{l_2^2n_1n_3}{l_1^2n_2n_4} = \frac{(l_2a)^2}{(l_1b)^2}\leq \frac{l_2^2N_{l_1}^2}{l_1^2N_{l_2}^2(1-\gamma_{l_2})^2} = \frac{1}{(1-\gamma_{l_2})^2},
\end{equation*}
thus 
\begin{equation}\label{eq:a,b,gamma}
    \gamma\ll \left\lvert\frac{l_2a}{l_1b} - 1\right\rvert\ll \gamma.
\end{equation} In particular, if $l_2a\neq l_1b$, then we have both $l_2a\gg \gamma^{-1}$ and $l_1b\gg \gamma^{-1}$. Since $l_1, l_2$ are distinct primes and $(a, b) = 1$, the only way we could have $l_2a=l_1b$ is if $a=l_1, b=l_2$. In this case, the number of choices for $g$ and $h$ is bounded by $H_{l_1}H_{l_2}/(l_1l_2)\ll H_{l_1}H_{l_2}/h(X)^2$. Throughout now we may assume that $l_2a\gg \gamma^{-1}$ and $l_1b\gg \gamma^{-1}$. 

Let us first handle the case when $H_1\leq N_1/(\log N_1)^2$. Given $a, b\ll N_{l_1}^{\frac{1}{3}}$, the number of choices for $g$ and $h$ satisfying the constraints for the $n_i$ is certainly bounded by $H_{l_1}H_{l_2}/(ab)$. Note that $b\gg l_2a/l_1$  by \eqref{eq:a,b,gamma}, therefore the number of solutions with $l_2a, l_1b\gg \gamma^{-1}$ is bounded by 
\begin{align*}
H_{l_1}H_{l_2}\sum_{\substack{a, b \ll N_{l_1}^{\frac{1}{3}} \\ l_2a, l_1b\gg \gamma^{-1} \\ \eqref{eq:a,b,gamma}} } \frac{1}{ab} & \ll H_{l_1}H_{l_2} \sum_{\substack{a\ll N_{l_1}^{\frac{1}{3}} \\ a\gg (l_2\gamma)^{-1}}} \frac{1}{a^2}\sum_{b : \eqref{eq:a,b,gamma}}1 
\\ &  \ll H_{l_1}H_{l_2}\sum_{\substack{a\ll N_{l_1}^{\frac{1}{3}} \\ a\gg (l_2\gamma)^{-1}}}  \frac{1}{a^2}\left(\gamma l_2a/l_1 +1 \right)    \ll \gamma H_{l_1}H_{l_2}(\log X + l_2) \\ & \ll \gamma H_{l_1} H_{l_2} h(X),
\end{align*}
since $h(X)\gg \log X$ in this case. Accounting for the solutions with $l_2a=l_1b$, we conclude that the number of non-trivial solutions is bounded by $H_{l_1}H_{l_2}/h(X)^2$. 

Let us now turn to the case when $H_1>N_1/(\log N_1)^2$. Let $K=(1+\varepsilon)\log\log X$, so that $\Omega(gahb)\leq 2K$. The number of non-trivial solutions to the equation $n_1n_4=n_2n_3$ with $a\neq l_1, b\neq l_2$ is then 
\begin{equation}\label{eq:SteinhausLambda}\ll2^{2K}\sum_{\substack{a\neq b \\  \gamma \ll \left\lvert\frac{l_2a}{l_1b}-1\right\rvert \ll \gamma \\ l_2a, l_1b \gg\gamma^{-1}\\ a, b\ll N_{l_1}^{\frac{1}{3}}}} 2^{-\Omega(ab)}\sum_{\substack{g\neq h \\ g, h\gg {N_{l_1}}^{\frac{2}{3}} \\ \max\{N_{l_1}(1-\gamma)/a, N_{l_2}(1-\gamma)/b\}\leq g, h\leq \min\{N_{l_1}/a, N_{l_2}/b \}}}2^{-\Omega(gh)}.
\end{equation}
Applying Proposition \ref{prop:nair-tenenbaum} to the sums over $g$ and $h$ yields that the right-hand side of \eqref{eq:SteinhausLambda} is
\begin{equation}
\label{eq:SteinhausLambda2}
 \ll 2^{2K} \frac{H_{l_1}H_{l_2}}{\log X}\sum_{\substack{a\neq b \\  \gamma \ll \left\lvert\frac{l_2a}{l_1b}-1\right\rvert \ll \gamma \\ l_2a, l_1b \gg\gamma^{-1}\\ a, b\ll N_{l_1}^{\frac{1}{3}}}} \frac{2^{-\Omega(ab)}}{ab}  \ll 2^{2K} \frac{H_{l_1}H_{l_2}}{\log X}\sum_{\substack{l_2a \gg\gamma^{-1}\\ a\ll N_{l_1}^{\frac{1}{3}}}} \frac{2^{-\Omega(a)}}{a^2} \sum_{\substack{a\neq b \\  \gamma \ll \left\lvert\frac{l_2a}{l_1b}-1\right\rvert \ll \gamma \\ l_1b \gg\gamma^{-1} }} 2^{-\Omega(b)}.
\end{equation}
If $l_2a\ll \gamma^{-2}$, we simply bound the sum over $b$ by $\gamma l_2a\gg 1$, namely the number of terms, and then the sum over such $a$ is 
\begin{equation*}
    \ll\gamma l_2 \sum_{l_2a\ll \gamma^{-2}}\frac{1}{a} \ll \gamma l_2 \log \gamma^{-1}.
\end{equation*}
If $l_2a\gg \gamma^{-2}$, we may apply Proposition \ref{prop:nair-tenenbaum} to the sum over $b$, yielding that the right-hand side of    \eqref{eq:SteinhausLambda2} is
\begin{equation}\label{eq:SteinhausLambda3}
  \ll 2^{2K} \gamma l_2\frac{H_{l_1}H_{l_2}}{(\log X)^{2\frac{1}{2}}} \left(\log\gamma^{-1} + \sum_{\substack{l_2a\gg \gamma^{-2} \\ a\ll N_{l_1}^{\frac{1}{3}}}} \frac{2^{-\Omega(a)}}{a(\log (l_2a))^{\frac{1}{2}}} \right).
\end{equation}
By a standard dyadic decomposition for $a$ and using Proposition \ref{prop:nair-tenenbaum} again, we obtain 
\begin{equation*}
    \sum_{a\ll N_{l_1}^{\frac{1}{3}}} \frac{2^{-\Omega(a)}}{a(\log a)^{\frac{1}{2}}} \ll \sum_{A=2^k\ll X^2 }\frac{1}{A (\log A)^{\frac{1}{2}}}\sum_{A<a\leq 2A }2^{-\Omega(a)}  \ll \sum_{k\ll \log X}\frac{1}{k}\ll \log \log X.
\end{equation*}
 Putting everything together (including the solutions with $a=l_1, b=l_2$), we conclude that the number of non-trivial solutions is 
\begin{equation*}
    \ll \frac{\gamma h(X)H_{l_1}H_{l_2}}{(\log X)^{1-2(1+\varepsilon)\log 2}}\log\log X + \frac{H_{l_1}H_{l_2}}{h(X)^2}\ll \frac{H_{l_1}H_{l_2}}{h(X)^2}
\end{equation*}
by making $\delta>0$ smaller if necessary, since $H(X)\leq X/(\log X)^c$ with $c>2\log 2 -1$.

Next, we consider the Rademacher case, which is very similar but a bit more technical. We utilize the parametrization of the solutions to $n_1n_2n_3n_4=\square$ as 
    \begin{equation*}
        n_1 = Aru, n_2 = Asv, n_3 =Brv, n_4 = Bsu,
    \end{equation*}
    where the variables $A, B, r, s, u, v$ satisfy $A = (n_1, n_2), B = (n_3, n_4)$, and $r, u, s, v$ parametrize the solutions to $(n_1/A)(n_4/B) = (n_2/A)(n_3/B)$, as in the Steinhaus case. In particular, the variables $r, u, s, v$ are all pairwise coprime. We may assume that $P^+(n_1)\neq P^+(n_3)$ (the contribution of such $n_i$ is very small, see the bound above in (I')), so that $A\neq B$ and $A, B\gg N_{l_1}^{\frac{2}{3}}$. Note that 
    \begin{equation*}
        (1-\gamma_{l_1})^2\leq \frac{l_2^2n_1n_3}{l_1^2 n_2n_4} = \frac{(l_2r)^2}{(l_1s)^2}\leq \frac{1}{(1-\gamma_{l_2})^2},
    \end{equation*} thus 
    \begin{equation}\label{eq:r,s,gamma}
        \gamma \ll \left\lvert \frac{l_2r}{l_1s} - 1 \right\rvert \ll \gamma.
    \end{equation} In particular if $l_2r\neq l_1s$ then $l_2r, l_1s\gg \gamma^{-1}$. Furthermore, since $(r, s)=1$, the only way we could have $l_2r=l_1s$ is if $r=l_1, s=l_2$. In this case, note that $m_1 = n_1/r, m_3 =n_3/r, m_2=n_2/s, m_4=n_2/s$ satisfy $m_1m_3=m_2m_4$ and all of the $m_i$ lie in an interval of length $\ll \gamma X$ centered at $X$ (since $N_{l_1}/r = X=N_{l_2}/s$). However, the number of choices for such $m_i$ is $\ll (\gamma X)^2\ll H_{l_1}H_{l_2}/(rs)\ll H_{l_1}H_{l_2}/h(X)^2$ by our work above in the Steinhaus case, or the work of Soundararajan and Xu \cite{soundararajan2023clt}. Henceforth we may assume that $l_2r\neq l_1s$, thus $l_2r, l_1s\gg \gamma^{-1}$. 
    
    Moreover
    \begin{equation*}
        (1-\gamma_{l_1})(1-\gamma_{l_2})\leq \frac{n_1n_4}{n_2n_3} = \frac{u^2}{v^2}\leq \frac{1}{(1-\gamma_{l_1})(1-\gamma_{l_2})},
    \end{equation*} thus 
    \begin{equation}
        \label{eq:u, v, gamma}
        \gamma \ll \left\lvert \frac{u}{v}-1\right\rvert\ll \gamma.
    \end{equation}
    Again, if $u\neq v$, we have $u, v\gg \gamma^{-1}$. The cases where $r=s$ or $u=v$ (which imply $r=s=1$ or $u=v=1$, respectively, due to coprimality) simply reduce to our earlier work in the Steinhaus case. Therefore, we may now assume that $u\neq v, r\neq s, l_2r\neq l_1s$ and $u, v, l_2r, l_1s\gg\gamma^{-1}$. The number of such solutions is bounded by 
    \begin{equation}\label{eq:RademacherLambda1}
        2^{2K}\sum_{\substack{r, u, s, v \\ l_2r, l_1s, u, v\gg \gamma^{-1} \\ ru, rs, su, sv\leq N_{l_2}^{\frac{1}{3}} \\ \eqref{eq:r,s,gamma}, \eqref{eq:u, v, gamma}}} 2^{-\Omega(rusv)}\sum_{\substack{A, B\\ A, B\gg N_{l_2}^{\frac{1}{3}} \\ (N_{l_1} - H_{l_1})/(ru) \leq A\leq  N_{l_1}/(ru) \\ (N_{l_2} - H_{l_2})/(su)\leq A\leq  N_{l_2}/(su)}} 2^{-\Omega(AB)}.
    \end{equation}
Proposition \ref{prop:nair-tenenbaum} applied  to the sums over $A$ and $B$ yields that the right-hand side of  \eqref{eq:RademacherLambda1} is
     \begin{equation}
         \label{eq:RademacherLambda2}
         \ll 2^{2K}\frac{H_{l_1}H_{l_2}}{\log X} \sum_{\substack{r, u, s, v \\ l_2r, l_1s, u, v\gg \gamma^{-1} \\ ru, rs, su, sv\leq N_{l_2}^{\frac{1}{3}} \\ \eqref{eq:r,s,gamma}, \eqref{eq:u, v, gamma}}} \frac{2^{-\Omega(rusv)}}{rsu^2}.
     \end{equation}
     Using \eqref{eq:r,s,gamma} we bound $s\gg l_2r/l_1\asymp r$, hence the right-hand side of       \eqref{eq:RademacherLambda2} is 
\begin{equation}\label{eq:RademacherLambda3}
    \ll 2^{2K}\frac{H_{l_1}H_{l_2}}{\log X} \sum_{\substack{r, u, s, v \\ l_2r, l_1s, u, v\gg \gamma^{-1} \\ ru, rs, su, sv\leq N_{l_2}^{\frac{1}{3}} \\ \eqref{eq:r,s,gamma}, \eqref{eq:u, v, gamma}}} \frac{2^{-\Omega(rusv)}}{r^2u^2}.
     \end{equation}
     Dropping all relations between the pairs $(r, s)$ and $(u, v)$, we observe that the bound in \eqref{eq:RademacherLambda3} is identical to the square of the sum over $a$ and $b$ in \eqref{eq:SteinhausLambda2}, thus may be bounded by $(\gamma l_2\log\log X)^2$. Accounting for the solutions with $l_2r=l_1s$ or $r=s=1$ or $u=v=1$, in a similar fashion as before, we conclude that the number of non-trivial solutions is 
     \begin{equation}
         \ll \frac{H_{l_1}H_{l_2}}{h(X)^2}
     \end{equation} by making $\delta>0$ and $\varepsilon>0$ smaller if necessary. 
    \end{proof}

 Applying Markov's inequality with the second moment and using Proposition \ref{prop:CountAt4Scales} gives
    \begin{equation*}
        \mathbb P\left(\mathbb E\widetilde{Y}_{l_1}\widetilde{Y}_{l_2}\geq \epsilon \right)\ll \frac{h(X)^{-2}}{\epsilon^2} \hspace{0.2cm}\text{ and }\hspace{0.2cm} \mathbb P\left(\left\lvert \mathbb EY_l^2-1\right\rvert\geq\epsilon \right)\ll \frac{h(X)^{-2}}{\epsilon^2}
    \end{equation*}
    for any $l_1\neq l_2$ and $l$. 
    For fixed and small enough $\epsilon >0$, we conclude that for fixed $l_1\neq l_2$, we have $\mathbb E \widetilde{Y}_{l_1}\widetilde{Y}_{l_2}\leq \epsilon$ and $\mathbb EY_l^2\geq 1-\epsilon$ with probability $1-O(h(X)^{-2}).$ Taking a union bound over $\asymp k^2$ values of $l_1\neq l_2$ and $l$ gives that $\mathbb E\widetilde{Y}_{l_1}\widetilde{Y}_{l_2}\leq \epsilon$ and $\mathbb EY_l^2\gg 1$ with probability $1- O(k^2h(X)^{-2})$. Now Lemma \ref{lemma:normalsBigLemma} gives 
    \[
\mathbb{P}\left(\max_{l} Y_l\ll \sqrt{\log k}\right) \leq  \exp\left(-\Theta\left(\frac{k^{O(1)}}{\sqrt{\log k}}\right)\right) + k^{-O(1)}
\]
for a small enough implied constant. Combining this with \eqref{eq:shortIntsNormal} with fixed $\eta>0$ gives that
\begin{equation*}\label{eq:shortIntsNormal2}
\max_l \widetilde{S}_{N_l} \gg \sqrt{\log k}
\end{equation*}
holds with probability 
\begin{equation*}
    1- O\left(k^{\frac{3}{2}}\eta^{-1} \left(k\sqrt{\varepsilon_1' + \varepsilon_2}\right)^{\frac1{5(k+1)}}  + \exp\left(-\Theta\left(\frac{k^{O(1)}}{\sqrt{\log k}}\right)\right) + k^{-O(1)}\right).
\end{equation*}
Recalling Lemmas \ref{lemma:ignoreTooManyPrimesShort} and \ref{lemma:ignoresmoothShort}, \eqref{eq:splitting1}, \eqref{eq:splitting2}, the values of $\varepsilon_1', \varepsilon_2$, and that $k = (X/H(X))^{\varepsilon_0}$ for a suitably small $\varepsilon_0>0$, we obtain 
\begin{equation*}
    \max_ l \left\lvert\frac{1}{\sqrt{H_l}}\sum_{N_l - H_l<n\leq N_l } f(n)\right\rvert \gg \sqrt{\log \frac{X}{H(X)}}
\end{equation*}
with probability $1- O((\log\log X)^{-\frac{2}{3}})$. Here we replaced the normalization by $\lvert\mathcal B_{N_l}\rvert$ in $\widetilde S_{N_l}$ with $\sqrt{H_l}$, as we may for $H\geq N^{\frac{11}{15}}$, in light of Lemmas \ref{lemma:tooManyPrimes} and \ref{lemma:unSmoothSquarefree}. This finishes the proof of Theorem \ref{thm:localizedBoundShort}.

\bibliographystyle{amsplain}
\bibliography{ref.bib}

@article{Wang-Xu,
  title = {Paucity phenomena for polynomial products},
  volume = {56},
  ISSN = {1469-2120},
  url = {http://dx.doi.org/10.1112/blms.13095},
  DOI = {10.1112/blms.13095},
  number = {8},
  journal = {Bulletin of the London Mathematical Society},
  publisher = {Wiley},
  author = {Wang,  Victor Y. and Xu,  Max W.},
  year = {2024},
  month = may,
  pages = {2718–2726}
}

@article{KlurmanShkredovXu2023,
  author    = {Oleksiy Klurman and Ilya D. Shkredov and Max Wenqiang Xu},
  title     = {On the random {C}howla conjecture},
  journal   = {Geometric and Functional Analysis},
  volume    = {33},
  number    = {3},
  pages     = {749--777},
  year      = {2023},
  doi       = {10.1007/s00039-023-00641-y},
}

@article{Harper2013,
  author    = {Adam J. Harper},
  title     = {On the limit distributions of some sums of a random multiplicative function},
  journal   = {Journal f{\"u}r die reine und angewandte Mathematik},
  volume    = {678},
  pages     = {95--124},
  year      = {2013},
  doi       = {10.1515/crelle-2011-0099},
  eprint    = {arXiv:1012.0207},
}

@article{BobkovGoetze2024,
  author    = {Sergey G. Bobkov and Friedrich G\"otze},
  title     = {Quantified {C}ram\'er-{W}old Continuity Theorem for the {K}antorovich Transport Distance},
  eprint    = {arXiv:2412.10276},
  year      = {2024},
  note      = {Preprint, submitted Dec 13, 2024},
}

@book{HallHeyde1980,
  author    = {Peter Hall and Christopher C. Heyde},
  title     = {Martingale Limit Theory and Its Application},
  publisher = {Academic Press},
  year      = {1980},
  isbn      = {0-12-354550-8},
}

@article{ChinisShala2025,
  author    = {Jake Chinis and Besfort Shala},
  title     = {Random {C}howla’s Conjecture for {R}ademacher Multiplicative Functions},
  journal   = {Transactions of the American Mathematical Society},
  volume    = {378},
  number    = {11},
  year      = {2025},
  doi       = {10.1090/tran/9457},
  eprint    = {arXiv:2409.05952},
}

@article{Hmyrova1966,
  author    = {N. A. Hmyrova},
  title     = {On polynomials with small prime divisors. II},
  journal   = {Izvestiya Akademii Nauk SSSR. Seriya Matematicheskaya},
  volume    = {30},
  pages     = {1367--1372},
  year      = {1966},
}

@article{Harper2019_MomentsII,
  author    = {Adam J. Harper},
  title     = {Moments of random multiplicative functions, {II}: {H}igh moments},
  journal   = {Algebra \& Number Theory},
  volume    = {13},
  number    = {10},
  pages     = {2277--2321},
  year      = {2019},
  doi       = {10.2140/ant.2019.13.2277},
  eprint    = {arXiv:1804.04114},
}

@article{Henriot2012_NairTenenbaumUniform,
  author    = {Kevin Henriot},
  title     = {Nair–{T}enenbaum bounds uniform with respect to the discriminant},
  journal   = {Mathematical Proceedings of the Cambridge Philosophical Society},
  volume    = {152},
  number    = {3},
  pages     = {405--424},
  year      = {2012},
  doi       = {10.1017/S0305004111000752},
  eprint    = {arXiv:1102.1643},
}

@article{NairTenenbaum1998,
  author    = {M. Nair and G. Tenenbaum},
  title     = {Short sums of certain arithmetic functions},
  journal   = {Acta Mathematica},
  volume    = {180},
  number    = {1},
  pages     = {119–144},
  year      = {1998},
  doi       = {10.1007/BF02392710},
}

@article{ChatterjeeSoundararajan2012,
  author  = {Sourav Chatterjee and Kannan Soundararajan},
  title   = {Random multiplicative functions in short intervals},
  journal = {Int. Math. Res. Not. IMRN},
  year    = {2012},
  number  = {3},
  pages   = {479--492},
  doi     = {10.1093/imrn/rns041},
}

@article{Pandey2024Squarefree,
  author  = {Mayank Pandey},
  title   = {Squarefree numbers in short intervals},
  journal = {arXiv preprint},
  year    = {2024},
  eprint  = {2401.13981},
  archivePrefix = {arXiv},
  primaryClass = {math.NT}
}

@article{Wintner1944,
  author       = {Wintner, Aurel},
  title        = {Random Factorizations and {R}iemann’s Hypothesis},
  journal      = {Duke Mathematical Journal},
  volume       = {11},
  number       = {2},
  pages        = {267--275},
  year         = {1944},
  note         = {MR 10160}
}

@inproceedings{Halasz1977,
  author    = {Halász, Gábor},
  title     = {On random multiplicative functions},
  booktitle = {Number Theory, Colloquia Mathematica Societatis János Bolyai},
  volume    = {13},
  year      = {1977},
  pages     = {371--378},
  publisher = {North-Holland},
  address   = {Amsterdam}
}

@inproceedings{HalaszRenyi1961,
  author    = {Halász, Gábor and Rényi, Alfréd},
  title     = {On random multiplicative functions},
  booktitle = {Proceedings of the Colloquium on Number Theory},
  year      = {1961},
  pages     = {147--151},
  address   = {Debrecen}
}

@article{gorodetsky_wong_2025_limiting,
  author       = {Gorodetsky, Ofir and Wong, Mo Dick},
  title        = {On the Limiting Distribution of Sums of Random Multiplicative Functions},
  journal      = {arXiv preprint arXiv:2508.12956},
  year         = {2025},
}

@article{Hough2011,
  author    = {Bob Hough},
  title     = {Summation of a random multiplicative function on numbers having few prime factors},
  journal   = {Mathematical Proceedings of the Cambridge Philosophical Society},
  volume    = {150},
  number    = {2},
  pages     = {193--214},
  year      = {2011},
  doi       = {10.1017/S0305004110000514}
}

@article{soundararajan2023clt,
  author    = {Kannan Soundararajan and Max Wenqiang Xu},
  title     = {Central limit theorems for random multiplicative functions},
  journal   = {Journal d’Analyse Mathématique},
  volume    = {151},
  pages     = {343--374},
  year      = {2023},
  doi       = {10.1007/s11854-023-0331-y},
  eprint    = {arXiv:2212.06098},
  archivePrefix = {arXiv},
  primaryClass = {math.NT},
}

@article{PandeyWangXu2024,
  author    = {Mayank Pandey and Victor Y. Wang and Max Wenqiang Xu},
  title     = {Partial sums of typical multiplicative functions over short moving intervals},
  journal   = {Algebra \& Number Theory},
  volume    = {18},
  number    = {2},
  pages     = {389--408},
  year      = {2024},
  doi       = {10.2140/ant.2024.18.389},
}

@article{najnudel2020consecutive,
  author    = {Joseph Najnudel},
  title     = {On consecutive values of random completely multiplicative functions},
  journal   = {Electronic Journal of Probability},
  volume    = {25},
  pages     = {Paper No.\ 59, 28 pp.},
  year      = {2020},
  doi       = {10.1214/20-EJP456}
}

@article{HobanShahIsmailVerreaultZaman2025,
  author    = {Declan Hoban and Jibran Iqbal Shah and Nadya-Catherine Ismail and William Verreault and Asif Zaman},
  title     = {Central limit theorems for random multiplicative functions over function fields},
  journal   = {arXiv preprint},
  eprint    = {2511.22905},
  archivePrefix = {arXiv},
  primaryClass  = {math.NT},
  year      = {2025},
  note      = {arXiv:2511.22905},
  url       = {https://arxiv.org/abs/2511.22905}
}

@article{Harper2023_ASLF,
  author  = {Adam J. Harper},
  title   = {Almost sure large fluctuations of random multiplicative functions},
  journal = {International Mathematics Research Notices},
  year    = {2023},
  number  = {3},
  pages   = {2095--2138},
  doi     = {10.1093/imrn/rnab299},
}

@article{LauTenenbaumWu2013MeanValuesRMF,
  author   = {Lau, Yuk-Kam and Tenenbaum, G{\'e}rald and Wu, Jie},
  title    = {On mean values of random multiplicative functions},
  journal  = {Proceedings of the American Mathematical Society},
  volume   = {141},
  number   = {2},
  pages    = {409--420},
  year     = {2013},
  month    = feb,
  doi      = {10.1090/S0002-9939-2012-11332-2},
  mrnumber = {2996946}
}

@article{Caich2023,
  author  = {Caich, Rachid},
  title   = {Almost sure upper bound for random multiplicative functions},
  journal = {arXiv preprint arXiv:2304.00943},
  year    = {2023},
  url     = {https://arxiv.org/abs/2304.00943}
}

@article{Mastrostefano2022,
  author    = {Daniele Mastrostefano},
  title     = {An almost sure upper bound for random multiplicative functions on integers with a large prime factor},
  journal   = {Electronic Journal of Probability},
  volume    = {27},
  year      = {2022},
  pages     = {Paper No. 32, 21 pp.},
  doi       = {10.1214/22-EJP751},
}

@article{Hardy2025LargePrime,
  author  = {Hardy, Seth},
  title   = {The distribution of partial sums of random multiplicative functions with a large prime factor},
  journal = {arXiv preprint arXiv:2503.06256},
  year    = {2025}, 
}

@article{Hardy2023AlmostSure,
  author    = {Hardy, Seth},
  title     = {Almost sure bounds for a weighted {S}teinhaus random multiplicative function},
  journal   = {Journal of the London Mathematical Society},
  volume    = {110},
  number    = {3},
  pages     = {e12979},
  year      = {2024},
  doi       = {10.1112/jlms.12979},
}

@article{Atherfold2025,
  author    = {Christopher Atherfold},
  title     = {Almost sure bounds for weighted sums of {R}ademacher random multiplicative functions},
  journal   = {arXiv preprint arXiv:2501.11076},
  year      = {2025},
  url       = {https://arxiv.org/abs/2501.11076}
}

@book{Chowla1965,
  author    = {Sarvadaman Chowla},
  title     = {The {R}iemann Hypothesis and {H}ilbert's Tenth Problem},
  publisher = {Gordon and Breach},
  year      = {1965}
}

@article{Elliott1992,
  author  = {Peter D. T. A. Elliott},
  title   = {On the Correlation of Multiplicative Functions},
  journal = {Notas de la Sociedad Matemática de Chile},
  volume  = {11},
  number  = {1},
  year    = {1992},
  pages   = {1--11}
}

@memoir{Elliott1994,
  author = {P. D. T. A. Elliott},
  title = {On the Correlation of Multiplicative and the Sum of Additive Arithmetic Functions},
  series = {Memoirs of the American Mathematical Society},
  volume = {112},
  number = {538},
  year = {1994},
  pages = {viii + 88}
}

@article{Martin2002,
  author    = {Greg Martin},
  title     = {An asymptotic formula for the number of smooth values of a polynomial},
  journal   = {Journal of Number Theory},
  volume    = {93},
  number    = {2},
  pages     = {108--182},
  year      = {2002},
  doi       = {10.1006/jnth.2001.2722}
}

@article{Harper2013ZetaMax,
  author    = {Adam J. Harper},
  title     = {A note on the maximum of the {R}iemann zeta function, and log-correlated random variables},
  journal   = {arXiv preprint},
  eprint    = {1304.0677},
  archivePrefix = {arXiv},
  primaryClass  = {math.NT},
  year      = {2013},
  url       = {https://arxiv.org/abs/1304.0677}
}

@article{Xu2024btran,
  title        = {Better than square-root cancellation for random multiplicative functions},
  author       = {Xu, Max Wenqiang},
  journal      = {Transactions of the American Mathematical Society, Series B},
  volume       = {11},
  pages        = {482--507},
  year         = {2024},
  doi          = {10.1090/btran/175},
  url          = {https://doi.org/10.1090/btran/175},
}

@article{Hardy2024imrn,
  author       = {Hardy, Seth},
  title        = {Bounds for Exponential Sums With Random Multiplicative Coefficients},
  journal      = {International Mathematics Research Notices},
  year         = {2024},
  number       = {22},
  pages        = {14138--14156},
  doi          = {10.1093/imrn/rnae234},
  url          = {https://doi.org/10.1093/imrn/rnae234},
}

@article{BenatarNishryRodgers2022,
  author       = {Benatar, Jacques and Nishry, Alon and Rodgers, Brad},
  title        = {Moments of Polynomials with Random Multiplicative Coefficients},
  journal      = {Mathematika},
  volume       = {68},
  number       = {1},
  pages        = {191--216},
  year         = {2022},
  doi          = {10.1112/mtk.12121},
  url          = {https://doi.org/10.1112/mtk.12121},
}

@article{Caich2024randomshort,
  title        = {Random multiplicative functions and typical size of character in short intervals},
  author       = {Caich, Rachid},
  journal      = {arXiv:2402.06426},
  year         = {2024},
  eprint       = {2402.06426},
  archivePrefix= {arXiv},
  primaryClass = {math.NT},
  url          = {https://arxiv.org/abs/2402.06426},
}

@article{KowalskiUntrau2025Wasserstein,
  author    = {Emmanuel Kowalski and Th{\'e}o Untrau},
  title     = {Wasserstein metrics and quantitative equidistribution of exponential sums over finite fields},
  journal   = {arXiv preprint},
  eprint    = {2505.22059},
  archivePrefix = {arXiv},
  primaryClass  = {math.NT},
  year      = {2025},
  url       = {https://arxiv.org/abs/2505.22059}
}

@article{Humphries2025QuantitativeEquidistribution,
  author    = {Peter Humphries},
  title     = {Quantitative Equidistribution on Hyperbolic Surfaces and Arithmetic Applications},
  journal   = {arXiv preprint},
  eprint    = {2512.15664},
  archivePrefix = {arXiv},
  primaryClass  = {math.NT},
  year      = {2025},
  url       = {https://arxiv.org/abs/2512.15664}
}

@book{Billingsley1999Convergence,
  author    = {Billingsley, Patrick},
  title     = {Convergence of Probability Measures},
  edition   = {2},
  publisher = {John Wiley \& Sons},
  address   = {New York},
  year      = {1999}
}

@article{saksman2020riemann,
  title={The {R}iemann zeta function and Gaussian multiplicative chaos: Statistics on the critical line},
  author={Saksman, Eero and Webb, Christian},
  journal={Annals of Probability},
  volume={48},
  number={6},
  pages={2680--2754},
  year={2020},
  publisher={Institute of Mathematical Statistics},
  doi={10.1214/20-AOP1433}
}

@article{aymone2021partial,
  title={Partial sums of random multiplicative functions and extreme values of a model for the {R}iemann zeta function},
  author={Aymone, Marco and Heap, Winston and Zhao, Jing},
  journal={Journal of the London Mathematical Society},
  volume={103},
  number={4},
  pages={1618--1642},
  year={2021},
  publisher={Wiley},
  doi={10.1112/jlms.12421}
}

@article{ErdosRenyi1970,
  author  = {Erd{\H{o}}s, Paul and R{\'e}nyi, Alfr{\'e}d},
  title   = {On a new law of large numbers},
  journal = {Journal d'Analyse Math{\'e}matique},
  volume  = {23},
  year    = {1970},
  pages   = {103--111}
}

@misc{jain2025smooth,
  title={Smooth Numbers in Short Intervals},
  author={Jain, Sarvagya},
  year={2025},
  eprint={2502.10530},
  archivePrefix={arXiv},
  primaryClass={math.NT},
  note={arXiv:2502.10530},
  url={https://arxiv.org/abs/2502.10530}
}

@article{Harper2020_MomentsRandomMultiplicativeI,
  author       = {Harper, Adam J.},
  title        = {Moments of random multiplicative functions, {I}: {L}ow moments, better than squareroot cancellation, and critical multiplicative chaos},
  journal      = {Forum of Mathematics, Pi},
  volume       = {8},
  pages        = {e1},
  year         = {2020},
  doi          = {10.1017/fmp.2019.7},
  url          = {https://doi.org/10.1017/fmp.2019.7}
}

\end{document}